\definecolor{Gray}{gray}{0.9}
\newtheorem{thm}{Theorem}[section]
\newtheorem{lemma}[thm]{Lemma}
\newtheorem{example}[thm]{Example}
\newtheorem{remark}[thm]{Remark}
\newtheorem{note}[thm]{Note}
\newcommand{\thickhline}{%
    \noalign {\ifnum 0=`}\fi \hrule height 1pt
    \futurelet \reserved@a \@xhline
}
\newcolumntype{"}{@{\hskip\tabcolsep\vrule width 1pt\hskip\tabcolsep}}
\begin{document}

\title[The smallest bimolecular mass action CRNs admitting Andronov--Hopf bifurcation]{The smallest bimolecular mass action reaction networks admitting Andronov--Hopf bifurcation}

\author {Murad Banaji}
\address{Department of Design Engineering and Mathematics, Middlesex University London}

\author {Bal\'azs Boros}
\address{Department of Mathematics, University of Vienna}
\thanks{BB's work was supported by the Austrian Science Fund (FWF), project P32532.}

\begin{abstract}
We address the question of which small, bimolecular, mass action chemical reaction networks (CRNs) are capable of Andronov--Hopf bifurcation (from here on abbreviated to ``Hopf bifurcation''). It is easily shown that any such network must have at least three species and at least four irreversible reactions, and one example of such a network with exactly three species and four reactions was previously known due to Wilhelm. In this paper, we develop both theory and computational tools to fully classify three-species, four-reaction, bimolecular CRNs, according to whether they admit or forbid Hopf bifurcation. We show that there are, up to a natural equivalence, 86 minimal networks which admit nondegenerate Hopf bifurcation. Amongst these, we are able to decide which admit supercritical and subcritical bifurcations. Indeed, there are 25 networks which admit both supercritical and subcritical bifurcations, and we can confirm that all 25 admit a nondegenerate Bautin bifurcation. A total of 31 networks can admit more than one nondegenerate periodic orbit. Moreover, 29 of these networks admit the coexistence of a stable equilibrium with a stable periodic orbit. Thus, fairly complex behaviours are not very rare in these small, bimolecular networks. Finally, we can use previously developed theory on the inheritance of dynamical behaviours in CRNs to predict the occurrence of Hopf bifurcation in larger networks which include the networks we find here as subnetworks in a natural sense. 
\end{abstract}

\keywords{chemical reaction networks, bimolecular networks, Hopf bifurcation, Bautin bifurcation}

\maketitle

\section{Introduction}

\subsection{Oscillation in chemical reaction networks}

Chemical reaction networks (CRNs) are interesting from both a practical and a theoretical point of view. They are central to many models in biology, and also play an important role in other areas of science and engineering. A number of powerful results, both classical and more recent, tell us about features of the dynamics of a CRN based on its combinatorial structure.

Oscillation in CRNs has been of great interest at least since the pioneering experimental work of Belousov and Zhabotinsky \cite{zhabotinsky1964periodic,belousov} in the 1950s and 1960s. However, the mathematical techniques available for determining whether a given CRN admits a (nonconstant) periodic orbit tend to be more limited than those for determining the number and nature of its equilibria. Nevertheless, there is a considerable theoretical literature on oscillation in CRNs. Papers include both those using analysis and numerics to study particular classes of networks of practical importance (\cite{dicera,Ruth1997,Kholodenko.2000aa,schustermarhlhofer,Qiao.2007aa,hellrendall2016,CONRADI2018507,obatake} are just a few of many examples); and those providing more general conditions which rule out or guarantee oscillation (examples include the original results of deficiency theory \cite{feinberg} and more recent work such as \cite{gedeonsontag2007,minchevaroussel,abphopf,errami2015,banajiCRNosci,bbhAMCrank}).

Hopf bifurcation provides a natural sufficient condition for oscillation, and consequently figures often in the work referenced above. Determining the capacity of a network for Hopf bifurcation is essentially a {\em local} problem, and potentially more tractable than other approaches to finding oscillation. Although still often challenging, the powerful machinery of bifurcation theory is at our disposal. 

Here we concern ourselves with finding the smallest bimolecular CRNs capable of Hopf bifurcation. By this we mean bimolecular networks having fewest species and reactions amongst all bimolecular networks admitting Hopf bifurcation; but we note that alternative characterisations of minimality are possible. This is discussed further in Section~\ref{secoutline} below.

By applying a range of different approaches, we are able to write down {\em all} bimolecular CRNs which are of minimal size in the above sense and admit Hopf bifurcation. We are also able to find which of these networks admit bifurcations of higher codimension, and prove the existence of more than one periodic orbit in several of these networks.

Some of the techniques and symbolic computations we use to analyse small networks in this paper rapidly grow in complexity with network size. However, results on small networks can naturally be combined with inheritance results, such as those gathered and developed in \cite{banajisplitreacs}, which tell us under what circumstances dynamical behaviours including oscillation in a CRN can be inferred from an analysis of its subnetworks. In the concluding section we illustrate how the results of this paper can be combined with inheritance results to make predictions about Hopf bifurcation in larger networks.

\subsection{Outline of the results}
\label{secoutline}

We first present the main results informally, with more precise statements and definitions to follow later. 

A key feature of any reaction network is its {\em rank}, namely, the dimension of the linear subspace spanned by its reaction vectors. Given a CRN of rank $r$, all nontrivial dynamics (if any), occurs on invariant sets of dimension at most $r$. For this reason, it is trivial that any autonomous ODE model of a CRN satisfying conditions for uniqueness of solutions, and with rank less than $2$, cannot have a nonconstant periodic orbit, and hence is incapable of nondegenerate Hopf bifurcation. 

Our interest here is in CRNs with mass action kinetics and which admit nondegenerate oscillation, namely periodic orbits which have exactly one Floquet multiplier equal to $1$ relative to their stoichiometric class (see \cite[Section~2]{bbhAMCrank} for more detail). There are many examples of rank-$2$ mass action CRNs admitting nondegenerate oscillation \cite{frank-kamenetsky:salnikov:1943, schnakenberg:1979, escher:1981, csaszar:jicsinszky:turanyi:1982, boros:hofbauer:2021a, boros:hofbauer:2022a}; however if we restrict attention to {\em bimolecular} CRNs, which are often regarded as physically more realistic \cite{erdi:toth:1989}, then the possibilities are more limited. 

While rank-$2$, bimolecular, mass action CRNs can admit periodic orbits, with well-known examples such as the Lotka reactions (see Remark~\ref{remdegenosci}) and the Ivanova reactions, from the proof of Theorem~6 in \cite{boros:hofbauer:2022a} we can conclude that Hopf bifurcation is ruled out in these systems (see also \cite{pota:1983,pota:1985}).

As a result, the smallest bimolecular mass action networks admitting Hopf bifurcation must have at least three species and rank at least three. They must also have at least four (irreversible) reactions by easy arguments to follow later (see Lemma~\ref{lemmin4reac}). Consequently, the study of Hopf bifurcation in bimolecular mass action CRNs begins with networks involving $3$ species and $4$ irreversible reactions and having rank $3$. We will refer to such networks as $(3,4,3)$ networks.

At this point we remark that Wilhelm and Heinrich \cite{WilhelmHeinrich1995,WilhelmHeinrich1996} used a somewhat different notion of ``smallest'' when studying a small bimolecular CRN admitting Hopf bifurcation: they gave minimality of the number of quadratic terms in the mass action ODEs higher importance than minimality of the number of reactions. The CRN they studied included five chemical reactions on three chemical species and consequently does not fall amongst those we focus on here; however the resulting differential equations included only one quadratic term which is, indeed, fewer than in any bimolecular $(3,4,3)$ networks admitting Hopf bifurcation that we study here.

In later work dating to 2009, Wilhelm \cite{Wilhelm2009} adopted a definition of ``smallest'' close to the one used here. In that paper he described a bimolecular $(3,4,3)$ CRN admitting Hopf bifurcation with mass action kinetics (see the discussion section in \cite{Wilhelm2009} and also \cite[Section 5]{boros:hofbauer:2022b} for further analysis of this network). Wilhelm's example demonstrated that the set of bimolecular $(3,4,3)$ mass action networks admitting Hopf bifurcation is nonempty. However, there are, up to isomorphism, 14670 bimolecular $(3,4,3)$ networks which admit positive equilibria. The question we aim to answer is: how many of these networks admit Hopf bifurcation with mass action kinetics?

We will show that Wilhelm's example is far from being unique. In fact, exactly 136 bimolecular $(3,4,3)$ networks admit nondegenerate Hopf bifurcation on the positive orthant, and these fall into 86 distinct equivalence classes, in a sense to be made precise later. Out of 86 networks which represent these 86 classes, we find that $57$ admit a supercritical Hopf bifurcation; $54$ admit a subcritical Hopf bifurcation, and $25$ admit both. These $25$ networks also admit a so-called Bautin bifurcation, which guarantees the existence of two periodic orbits at some values of the rate constants. In total, a stable equilibrium and a stable periodic orbit can coexist in $29$ of the networks, while an unstable equilibrium and an unstable periodic orbit can coexist in $2$ of the networks. These claims are the content of Theorems~\ref{thms3r4Hopfpotential},~\ref{thms3r4Hopf}~and~\ref{thmBautin} below. 

There is also (up to equivalence) a single, exceptional, bimolecular $(3,4,3)$ network which does not admit nondegenerate Hopf bifurcation, but robustly admits a degenerate Hopf bifurcation. This remarkable network is discussed briefly in the concluding section and explored further in \cite{BBHverticalHopf}.

\section{Preliminaries}

We collect basic notation and definitions needed later. 

Points and sets in $\mathbb{R}^n$ are referred to as {\bf positive} if they lie in the {\bf positive orthant} $\mathbb{R}^n_{+} := \{x \in \mathbb{R}^n\colon x_i > 0,\,\,i=1,\ldots,n\}$, and {\bf nonnegative} if they lie in the {\bf nonnegative orthant} $\mathbb{R}^n_{\geq 0}:= \{x \in \mathbb{R}^n\colon x_i \geq 0,\,\,i=1,\ldots,n\}$. 

We denote a {\bf vector of ones}, whose length is inferred from the context, by $\mathbf{1}$. 

Given a nonnegative integer vector $a = (a_1,\ldots, a_n)$, we adopt the standard convention that $x^a$ is an abbreviation for the {\bf monomial} $x_1^{a_1}x_2^{a_2}\cdots x_n^{a_n}$, while if $A$ is an $m \times n$ nonnegative integer matrix with rows $A_1, \ldots, A_m$, then $x^A$ denotes the {\bf vector of monomials} $(x^{A_1}, x^{A_2}, \ldots, x^{A_m})^{\mathrm{t}}$. The notation clearly extends to the case where $A$ is any real matrix, and in this case we obtain a vector of generalised monomials. 

\subsection{Matrices}
We need several notions from linear algebra and the theory of matrices. 

We will denote by $a \circ b$ the {\bf entrywise product} of two matrices or vectors $a$ and $b$ of the same dimensions. It is also convenient to denote by $a/b$ the {\bf entrywise quotient} (provided that no entry of $b$ is zero). We assume that ordinary matrix multiplication takes precedence over entrywise operations and so, for example, an expression such as ``$k\circ AB$'' is to be interpreted as ``$k\circ (AB)$''. 

We apply the {\bf logarithm} and {\bf square root} to positive vectors with the understanding that the function is applied to each entry.

If $v \in \mathbb{R}^n$, we denote by $\Delta_v$ the diagonal matrix whose $(i,i)$th entry is $v_i$. 

Given an $n \times m$ matrix $M$, and nonempty sets $\alpha \subseteq \{1, \ldots, n\}$, $\beta \subseteq \{1, \ldots, m\}$ we write $M(\alpha|\beta)$ for the submatrix of $M$ with rows from $\alpha$ and columns from $\beta$. If $|\alpha|=|\beta|$ we write $M[\alpha|\beta]$ for the {\bf minor} $\mathrm{det}\,M(\alpha|\beta)$. The minor $M[\alpha|\alpha]$ is a {\bf principal minor} of $M$.

An $n \times n$ matrix $M$ is {\bf sign-symmetric} \cite{hershkowitz} if oppositely placed minors cannot have opposite signs, namely, $M[\alpha|\beta]\,M[\beta|\alpha] \geq 0$ whenever $\alpha,\,\beta \subseteq \{1, \ldots, n\}$ and $|\alpha| = |\beta| \neq 0$.

The {\bf Cauchy--Binet formula} \cite{gantmacher} tells us how to compute minors of a product of matrices. Given an $n \times k$ matrix $A$, a $k \times m$ matrix $B$, and $\alpha \subseteq \{1, \ldots, n\}$ and $\beta \subseteq \{1, \ldots, m\}$ satisfying $|\alpha| = |\beta| \neq 0$, we have:
\[
(AB)[\alpha|\beta] = \sum_{\gamma} A[\alpha|\gamma]B[\gamma|\beta]
\] 
where $\gamma$ ranges over all subsets of $\{1, \ldots, k\}$ of size $|\alpha|$.

A {\bf $\bm{P_0}$ matrix} is a square matrix all of whose principal minors are nonnegative. A $P_0$ matrix cannot have negative real eigenvalues \cite{kellogg}.

We refer to square matrices whose eigenvalues all have negative real parts as {\bf Hurwitz matrices}. The closure of the set of $n \times n$ Hurwitz matrices consists of matrices whose eigenvalues all have non-positive real parts. 

In this paper, we refer to an $n \times n$ matrix $M$, not necessarily symmetric, as {\bf negative semidefinite} if $x^\mathrm{t}Mx\leq 0$ for all $x \in \mathbb{R}^n$. The matrix $M$ is negative semidefinite if and only if $-(M+M^\mathrm{t})$ is a $P_0$ matrix. It is easily seen that negative semidefinite matrices lie in the closure of the Hurwitz matrices.  

Given a square matrix $M$, we denote by $M^{[2]}$ its {\bf second additive compound} \cite{muldowney}. An explicit formula for the additive compound matrix can be found in \cite{li_wang}. In the case of greatest interest here, namely for a $3 \times 3$ matrix $M = (m_{ij})$, we have
\[
M^{[2]} = \left(\begin{array}{ccc}m_{11}+m_{22}& m_{23} &-m_{13}\\m_{32}&m_{11}+m_{33} & m_{12}\\-m_{31}&m_{21}&m_{22}+m_{33}\end{array}\right)\,.
\]
Additive compound matrices play an important role in the study of Hopf bifurcation \cite{Guckenheimer1997aa,abphopf}. The key property of second additive compound matrices used here is that the eigenvalues of $M^{[2]}$ are the sums of pairs of eigenvalues of $M$, counted with multiplicity. In particular, if $M$ has a pair of purely imaginary eigenvalues, then $M^{[2]}$ is singular.

\subsection{Chemical reaction networks}

We summarise briefly the key notions associated with CRNs needed here, and refer the reader to \cite{banajiCRNosci,bbhAMCrank} for more detail. 

Given chemical species $\mathsf{X}_1, \ldots, \mathsf{X}_n$, a {\bf complex} is a formal sum of the form $\sum a_i \mathsf{X}_i$, where we assume here that the coefficients $a_i$ are nonnegative integers. The coefficient $a_i$ in such a complex is termed the {\bf stoichiometric coefficient} of $\mathsf{X}_i$ in that complex. A complex $\sum a_i \mathsf{X}_i$ is termed {\bf bimolecular} if $\sum a_i \leq 2$. We remark that it would seem more accurate to refer to such complexes as ``at most bimolecular''; however, for brevity we refer to them simply as ``bimolecular''. Note that such complexes are termed ``short complexes'' in \cite{Horn_3_short}.

An (irreversible) chemical reaction involves the interconversion of one complex, termed the {\bf reactant complex}, into another, termed the {\bf product complex}. In this paper, a reaction will always mean an irreversible reaction. A reaction is termed bimolecular if its reactant and product complexes are both bimolecular. 

A CRN is simply a set of chemical reactions on some set of species. It is termed bimolecular if all its reactions are bimolecular. 

For the remainder of this section we consider an arbitrary CRN involving $n$ chemical species and $m$ reactions. We assume a fixed, but arbitrary ordering on species and reactions. Associated with the CRN is an $n \times m$ {\bf stoichiometric matrix}, whose $(i,j)$th entry tells us the net production of the $i$th species in the $j$th reaction. Each column of the stoichiometric matrix is termed a {\bf reaction vector}. The {\bf stoichiometric subspace} is the span of the reaction vectors, and the {\bf rank} of the CRN is the rank of its stoichiometric matrix, i.e., the dimension of the stoichiometric subspace. 

We will refer to a CRN with $n$ species, $m$ reactions, and rank $r$ as an $\bm{(n,m,r)}$ {\bf CRN}. Note that $r \leq \min\{n,m\}$. 

We define the {\bf left stoichiometric matrix} of a CRN to be the nonnegative integer matrix whose $(i,j)$th element is the stoichiometric coefficient of the $i$th species in the reactant complex of the $j$th reaction. 

In this paper we are concerned with CRNs with {\bf mass action kinetics}, which we will refer to as ``mass action CRNs''. A mass action CRN gives rise to a polynomial ODE on $\mathbb{R}^n$ which can be written compactly, using the notation developed above, as $\dot x = \Gamma (\kappa \circ x^{\Gamma_l^\mathrm{t}})$. Here $x$ is the vector of species concentrations, $\kappa$ is a positive vector of {\bf rate constants}, $\Gamma$ is the stoichiometric matrix, and $\Gamma_l$ is the left stoichiometric matrix. The vector $\kappa \circ x^{\Gamma_l^\mathrm{t}}$ is termed the {\bf rate vector} of the CRN. A bimolecular mass action CRN gives rise to an ODE system which is at most quadratic. In practice, we are interested in the evolution of species concentrations on $\mathbb{R}^n_{\geq 0}$. 

Two CRNs are termed {\bf isomorphic} if there is a relabelling/reordering of the species and/or reactions of one which gives us the other. 

Two CRNs are termed {\bf dynamically equivalent} if, perhaps after relabelling of species, they give rise to the same set of mass action differential equations (a precise definition is given in Appendix~\ref{appdyniso}). Such CRNs are termed ``unconditionally confoundable'' in \cite{CraciunPanteaIdentifiability}, where a criterion for such equivalence is given. Non-isomorphic CRNs may be dynamically equivalent as we see in Example~\ref{exdyneq} below.

\begin{example}[Dynamically equivalent, but non-isomorphic, CRNs]
\label{exdyneq}
Consider the pair of networks
\[
\mathsf{0} \rightarrow \mathsf{X},\quad 2\mathsf{X} \rightarrow 2\mathsf{Y}, \quad \mathsf{Y} \rightarrow 0
\]
and
\[
\mathsf{0} \rightarrow \mathsf{X},\quad 2\mathsf{X} \rightarrow \mathsf{X} + \mathsf{Y}, \quad \mathsf{Y} \rightarrow 0\,.
\]
It is easily seen that these CRNs are not isomorphic; they do, however, give rise to the same set of mass action differential equations and are thus dynamically equivalent in the sense used here. In fact they are equivalent in an even simpler sense described in Appendix~\ref{appdyniso}. 
\end{example}

\begin{remark}[More subtle equivalences between networks]
\label{remnondeq}
Two networks may fail to be dynamically equivalent in the sense used here, but nevertheless give rise to the same set of differential equations after some natural recoordinatisation. More details and an example are in Appendix~\ref{secnondeq}.
\end{remark}

We refer to a CRN with stoichiometric matrix $\Gamma \in \mathbb{R}^{n \times m}$ as {\bf dynamically nontrivial} if $\mathrm{ker}\,\Gamma \cap \mathbb{R}^m_{+}$ is nonempty, and dynamically trivial otherwise. It is easily shown that dynamically trivial CRNs can have no limit sets intersecting the positive orthant \cite{banajiCRNcount} for mass action kinetics or, indeed, under much weaker assumptions on the kinetics. Thus whenever our goal is to find CRNs with interesting positive limit sets, we can restrict attention to dynamically nontrivial CRNs. 

\begin{lemma}
\label{lemmin4reac}
Any bimolecular, $3$-species, mass action CRN with the capacity for Hopf bifurcation must have at least $4$ reactions. 
\end{lemma}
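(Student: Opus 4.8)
The plan is to assume such a network has $m\le 3$ reactions and reach a contradiction by splitting on the rank $r$ of the network. Since $r\le\min\{n,m\}=\min\{3,m\}\le 3$, there are only a handful of cases. Throughout I use two standing consequences of ``capacity for Hopf bifurcation'': the network must admit a positive equilibrium (a Hopf bifurcation takes place at an equilibrium), and, the bifurcation being nondegenerate, it must admit a nondegenerate periodic orbit inside some stoichiometric compatibility class (the limit cycle produced by the bifurcation, hyperbolic relative to that class when the first Lyapunov coefficient is nonzero).

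First I would dispose of $r\le 1$: as already noted, a CRN of rank less than $2$ has no nonconstant periodic orbit, so it cannot undergo Hopf bifurcation; hence we may assume $r\in\{2,3\}$. Next, the case $r=m$ — which, given $m\le 3$ and $r\ge 2$, means $(m,r)\in\{(2,2),(3,3)\}$: here the stoichiometric matrix $\Gamma\in\mathbb{R}^{3\times m}$ has full column rank, so $\mathrm{ker}\,\Gamma=\{0\}$ and in particular $\mathrm{ker}\,\Gamma\cap\mathbb{R}^m_{+}=\emptyset$; thus the network is dynamically trivial and has no positive equilibrium, again precluding Hopf bifurcation. The only remaining possibility with $m\le 3$ is $(m,r)=(3,2)$, i.e.\ a bimolecular, rank-two, mass action CRN; by Theorem~6 of \cite{boros:hofbauer:2022a} such a network admits no nondegenerate periodic orbit, hence no nondegenerate Hopf bifurcation. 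Having excluded every configuration with $m\le 3$, we conclude that at least four reactions are needed.

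I do not expect a serious obstacle: the argument merely assembles results already in hand — the impossibility of periodic orbits in rank $\le 1$, the fact that dynamically trivial networks have no positive limit sets, and Theorem~6 of \cite{boros:hofbauer:2022a}. The one point deserving care is the step, in the case $(m,r)=(3,2)$, of passing from ``the network admits nondegenerate Hopf bifurcation'' to ``the network admits a nondegenerate periodic orbit'', so that Theorem~6 can be applied; this is the standard statement about the hyperbolicity of the bifurcating cycle, but it is worth stating cleanly in the setting of dynamics on a compatibility class. It is likewise worth making explicit that ``capacity for Hopf bifurcation'' forces the existence of a positive equilibrium, which is precisely what makes the dynamically-trivial argument bite when $r=m$.
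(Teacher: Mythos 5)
Your case decomposition and the two facts you lean on (dynamical triviality when $r=m$, and Theorem~6 of \cite{boros:hofbauer:2022a} when $(m,r)=(3,2)$) are exactly the ingredients of the paper's proof, and your case analysis is exhaustive. There is, however, one genuine gap: the standing assumption that ``capacity for Hopf bifurcation forces the existence of a \emph{positive} equilibrium''. The justification you give --- a Hopf bifurcation takes place at an equilibrium --- does not deliver positivity: a priori the bifurcating equilibrium could lie on the boundary of $\mathbb{R}^3_{\geq 0}$, and your $r=m$ branch then fails, because dynamical triviality only excludes equilibria and limit sets meeting the \emph{positive} orthant and says nothing about boundary dynamics. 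You flag this point yourself as ``worth making explicit'', but making it explicit requires an actual argument rather than a restatement.

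The paper supplies that argument as the opening step of its proof: any boundary equilibrium of a bimolecular three-species mass action network lies on an invariant face of $\mathbb{R}^3_{\geq 0}$, and the restriction of the network to such a face is again a bimolecular mass action CRN, now of rank less than $3$; Theorem~6 of \cite{boros:hofbauer:2022a} (or, for rank at most $1$, the absence of any nonconstant periodic orbit) then forbids nondegenerate Hopf bifurcation there. Once boundary bifurcations are excluded, the bifurcation must occur at a positive equilibrium and the rest of your argument goes through verbatim. So the fix is short, but it is a step your proof needs and currently lacks.
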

\begin{proof}
First, we can rule out Hopf bifurcation on the boundary of $\mathbb{R}^3_{\geq 0}$ in a $3$-species bimolecular mass action network. By elementary arguments, 
any boundary equilibrium must occur on an invariant face of $\mathbb{R}^3_{\geq 0}$; however, restricted to such a face, the network takes the form of a bimolecular CRN of rank less than $3$, and Hopf bifurcation is ruled out by Theorem~6 in \cite{boros:hofbauer:2022a}. So Hopf bifurcation in a $3$-species bimolecular mass action network must occur on the positive orthant. 

We have already observed that by Theorem~6 in \cite{boros:hofbauer:2022a} any bimolecular CRN capable of Hopf bifurcation must have rank at least 3 and hence at least 3 reactions. On the other hand any $(3,3,3)$ CRN (indeed, any CRN with rank equal to the number of reactions) has stoichiometric matrix with trivial kernel, and hence is dynamically trivial; this rules out positive equilibria, and Hopf bifurcation on the positive orthant. We arrive at the conclusion that a $3$-species, mass action CRN allowing Hopf bifurcation can have no fewer than four reactions.
\end{proof}

Using the algorithmic approaches described in \cite{banajiCRNcount}, we can enumerate bimolecular $(n,m,r)$ CRNs with specified properties for sufficiently small $n$ and $m$. From such enumeration, we obtain:
\begin{lemma}
\label{lems3r4}
Up to isomorphism, there are $14670$ dynamically nontrivial, bimolecular $(3,4,3)$ networks. These fall into $9259$ dynamically non-equivalent classes.
\end{lemma}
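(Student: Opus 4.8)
The plan is to reduce the statement to a finite, essentially routine, enumeration carried out with the algorithmic machinery of \cite{banajiCRNcount}. First I would fix the building blocks. On three species $\mathsf{X}_1,\mathsf{X}_2,\mathsf{X}_3$ there are exactly ten bimolecular complexes: $\mathsf{0}$, the three complexes $\mathsf{X}_i$, the three complexes $2\mathsf{X}_i$, and the three complexes $\mathsf{X}_i+\mathsf{X}_j$ with $i\ne j$. A bimolecular reaction is an ordered pair of distinct such complexes, so there are $10\cdot 9 = 90$ candidate reactions, and a candidate network is a $4$-element subset of these, of which there are $\binom{90}{4}$. Each candidate determines a $3\times 4$ stoichiometric matrix $\Gamma$ and a left stoichiometric matrix, and I would retain exactly those candidates that are $(3,4,3)$ networks, i.e.\ satisfy $\mathrm{rank}\,\Gamma = 3$, and that are dynamically nontrivial. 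Since a rank-$3$ matrix with four columns has a one-dimensional kernel spanned by some $v\in\mathbb{R}^4$, dynamical nontriviality is just the easily checked condition that $v$ or $-v$ lies in $\mathbb{R}^4_{+}$; note that, rate constants being free and positive, this is also equivalent to the existence of a positive equilibrium, so the networks retained here are precisely those referred to in the introduction.

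The second step is to quotient the retained collection by isomorphism. The symmetric group $S_3$ acts on the species, hence on complexes, reactions, and networks, and two networks are isomorphic exactly when they lie in one $S_3$-orbit. Enumerating these orbits --- for instance by computing a canonical representative of each network under the $S_3$-action, or by a Burnside-type count --- yields the first asserted number, $14{,}670$. (One may reduce the search at the outset by working with the $90$ reactions modulo $S_3$, but this is only an optimisation.)

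The third step is the finer quotient by dynamical equivalence, in the sense of Appendix~\ref{appdyniso}, which is the ``unconditional confoundability'' of \cite{CraciunPanteaIdentifiability}. Two mass action networks produce the same family of ODEs, as the rate constants range over all positive values, precisely when they have the same set of reactant complexes and, for each reactant complex $y$, the outgoing reaction vectors $\{y'-y\}$ determine in both networks the same set of attainable net contributions to $\dot x$ at the monomial $x^y$ --- a condition phrased in terms of the cones generated by those reaction vectors and made precise in the cited sources. Composing this relation with the $S_3$-action and counting orbits over the $14{,}670$ isomorphism classes gives the second asserted number, $9{,}259$.

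The main difficulty is not a single hard argument but the correctness and reproducibility of the enumeration, and in particular the faithful treatment of dynamical equivalence: unlike isomorphism, it is not merely the action of a finite group but an equivalence relation defined through equality of ODE families, so the confoundability criterion must be implemented exactly as stated in \cite{CraciunPanteaIdentifiability} and the orbit count done with care. We would guard against error by running the enumeration through independent implementations and by cross-checking aggregate counts --- the total number of candidate networks before any quotient, and the number fixed by each element of $S_3$ --- against the combinatorial identities they must satisfy.
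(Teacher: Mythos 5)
Your proposal is correct and, as far as the enumeration and the isomorphism count are concerned, is essentially the paper's approach: the paper's own proof of this lemma is just an appeal to the computational machinery of \cite{banajiCRNcount} (brute-force generation of bimolecular $(3,4,3)$ candidates, the rank and positive-kernel filters you describe, and canonical forms under the $S_3$ species action), with the code published on GitHub. The one place where you genuinely diverge is the dynamical-equivalence count. You propose implementing the general unconditional-confoundability criterion of \cite{CraciunPanteaIdentifiability} directly, comparing, for each shared reactant complex, the cones positively spanned by the outgoing reaction vectors. The paper instead first proves (Theorem~\ref{thmdyniso} in Appendix~\ref{appdyniso}) that for dynamically nontrivial $(n,n+1,n)$ networks which are bimolecular (or nondegenerate), dynamical equivalence collapses to ``simple equivalence'': after relabelling, the two networks have the same reactant complex reaction-by-reaction, and corresponding reaction vectors are positive multiples of one another. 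The structural inputs are Lemma~\ref{lemlindep} (no proper subset of the $n+1$ reaction vectors is linearly dependent, since $\mathrm{ker}\,\Gamma$ is one-dimensional and contains a positive vector) and Lemma~\ref{lemtotRC} (at least two reactant complexes), which together force the cone at each reactant complex to be simplicial with generators unique up to positive scaling --- so the cone comparison you describe degenerates to matching generators. Both routes give the same answer; the paper's buys a much lighter and more transparently verifiable computation, which matters since, as you rightly stress, the only real risk in this lemma is an implementation error in the equivalence test.
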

Code and scripts to carry out the enumeration of CRNs, and other analysis to follow, are available on GitHub \cite{muradgithub, balazsgithub}. 

Ruling out Hopf bifurcation in the great majority of the $(3,4,3)$ networks in Lemma~\ref{lems3r4} becomes straightforward once we have developed a little technical machinery.

\subsection{Equilibria and Jacobian matrices of mass action networks}

Recall that a mass action CRN gives rise to an ODE of the form:
\begin{equation}
\label{eqMA}
\dot x = \Gamma (\kappa \circ x^{\Gamma_l^\mathrm{t}})\,.
\end{equation}
We assume that the CRN giving rise to this ODE is a dynamically nontrivial $(n,m,r)$ CRN, so that $\mathrm{ker}_{+}\,\Gamma := \mathrm{ker}\,\Gamma \cap \mathbb{R}^m_{+}$ is nonempty. For any fixed $\kappa$, a positive equilibrium $x_0$ satisfies $\kappa \circ x_0^{\Gamma_l^\mathrm{t}} \in \mathrm{ker}_{+}\,\Gamma$. On the other hand, given any $y \in \mathrm{ker}_{+}\,\Gamma$, and any positive $x_0$, setting
\[
\kappa  = y \circ x_0^{-\Gamma_l^\mathrm{t}}
\]
ensures that $\kappa \circ x_0^{\Gamma_l^\mathrm{t}} = y$, i.e., with this choice of rate constants, $x_0$ is an equilibrium of the system and the rate vector at $x_0$ is precisely $y$. 

Let $K$ be the closure of $\mathrm{ker}_{+}\,\Gamma$. The set $K$ is an $(m-r)$-dimensional, pointed, polyhedral cone in $\mathbb{R}^m$ with relative interior $\mathrm{ker}_{+}\,\Gamma$. Let $K$ have, up to positive scaling, $k$ extreme vectors, say $u_1, \ldots, u_k$, so that we can write any element of $\mathrm{ker}_{+}\,\Gamma$ as $\sum_{j=1}^k \mu_j u_j$ where $\mu_j > 0$. Then $\mathrm{ker}_{+}\,\Gamma$ is parameterised by the parameter $\mu := (\mu_1, \ldots, \mu_k)^\mathrm{t} \in \mathbb{R}^k_{+}$. Observe that if $\mathrm{dim}\,K =m-r \leq 2$, then in fact $k= m-r$, and so the $\mu_j$ are {\em uniquely} defined for each point in $\mathrm{ker}_{+}\,\Gamma$; but if $m-r \geq 3$, then it is possible that $k>m-r$, in which case the $\mu_j$ are not uniquely defined.

On the positive orthant the ODE (\ref{eqMA}) has Jacobian matrix
\begin{equation}
\label{eqJacMA}
\Gamma \Delta_{v(x,\kappa)} \Gamma_l^{\mathrm{t}}\Delta_{1/x}\,,
\end{equation}
where $\Delta_{v(x,\kappa)}$ is a diagonal matrix whose diagonal entries are the entries of $v(x,\kappa):=\kappa \circ x^{\Gamma_l^\mathrm{t}}$, and $\Delta_{1/x}$ is a diagonal matrix whose diagonal entries are $1/x_i$. 

We say that an equilibrium of a CRN is {\bf nondegenerate} (resp., {\bf degenerate}), if it is nondegenerate (resp., degenerate) relative to the stoichiometric subspace of the CRN (see Section~2.2 in \cite{banajipantea} for a variety of equivalent formulations of this condition). In the mass action case, a positive equilibrium, say $x_0$, of (\ref{eqMA}) with rate constants $\kappa_0$ is nondegenerate if and only if $\mathrm{rank}\,(\Gamma \Delta_{v(\kappa_0,x_0)} \Gamma_l^{\mathrm{t}}\Delta_{1/x_0}\Gamma) = \mathrm{rank}\,\Gamma$, i.e., the Jacobian matrix evaluated at the equilibrium acts as a nonsingular transformation on the stoichiometric subspace. If $\mathrm{rank}\,\Gamma$ is equal to the number of species, then this condition for nondegeneracy of $x_0$ becomes simply that the Jacobian matrix $\Gamma \Delta_{v(\kappa_0,x_0)} \Gamma_l^{\mathrm{t}}\Delta_{1/x_0}$ has maximal rank. 

Let us now assume that the extreme vectors $u_i$ of $K$ have been chosen and fixed. Using (\ref{eqJacMA}) and the previous remarks, the set of Jacobian matrices of the network {\em at positive equilibria} can be written
\begin{equation}
\label{eqJacMA1}
\mathcal{J}:= \{\Gamma \Delta_{\sum \mu_j u_j} \Gamma_l^{\mathrm{t}}\Delta_{z}\colon z  \in \mathbb{R}^n_{+},\,\,\mu \in \mathbb{R}^k_{+}\}
\end{equation}
where we have set $z_i = 1/x_i$, and $\Delta_{\sum \mu_j u_j}$ is the diagonal matrix whose $(i,i)$th entry is the $i$th entry of $\sum_{j=1}^k \mu_j u_j$. Note that each diagonal entry of $\Delta_{\sum \mu_j u_j}$ is a homogeneous linear form in the variables $\mu = (\mu_1, \ldots, \mu_k)$. It is straightforward that each matrix in $\mathcal{J}$ is indeed the Jacobian matrix of (\ref{eqMA}) at some equilibrium for some choice of rate constants: given any $z\in\mathbb{R}^n_{+},\,\,\mu \in \mathbb{R}^k_{+}$, set $x = 1/z$ and $\kappa  = \left(\sum \mu_j u_j\right)\circ x^{-\Gamma_l^\mathrm{t}}$; then $\kappa \circ x^{\Gamma_l^\mathrm{t}} = \sum \mu_j u_j$, and $x$ is thus an equilibrium of the system with Jacobian matrix $\Gamma \Delta_{\sum \mu_j u_j} \Gamma_l^{\mathrm{t}}\Delta_{z}$. The set of Jacobian matrices of (\ref{eqMA}) at equilibria is thus parameterised by $n+k$ positive parameters $z_1, \ldots, z_n$ and $\mu_1, \ldots, \mu_k$. 

\begin{remark}[Positive homogeneity of the set of Jacobian matrices]
\label{remJhom}
Note that the family of Jacobian matrices in (\ref{eqJacMA1}) is overparameterised, being homogeneous (under positive scaling) in both $z$ and $\mu$. I.e., setting $J(z, \mu) := \Gamma \Delta_{\sum \mu_j u_j} \Gamma_l^{\mathrm{t}}\Delta_{z}$, and choosing $s,t$ to be any positive real numbers, we have $J(sz, t\mu) = stJ(z, \mu)$. Thus we can always reduce the number of parameters in the parameterisation of the Jacobian matrices by $1$; and if we are interested only in properties of $J$ which are invariant under scaling, then we can reduce it by $2$. For example, in the case where $n=3$ and $\mathrm{dim}\,(\mathrm{ker}\,\Gamma)=1$, we have, up to positive scaling, only a two-parameter family of Jacobian matrices at equilibria. 
\end{remark}

\section{Dynamically nontrivial $(n,n+1,n)$ networks}
\label{secDN}

Let us now consider CRNs in the class of most interest here: dynamically nontrivial $(n,n+1,n)$ networks. Analysis of these networks is considerably simpler than the general case. Unless explicitly stated otherwise, the results in Sections~\ref{secDN}~and~\ref{secHopfgeneral} do not assume bimolecularity of the CRNs involved.

First, we observe that checking dynamical equivalence is straightforward for these networks. This is because amongst dynamically nontrivial $(n,n+1,n)$ CRNs for any fixed $n$, dynamical equivalence reduces to a simpler, easily checked, condition. The claim about dynamical equivalence in Lemma~\ref{lems3r4} takes advantage of this simplification. The details are in Appendix~\ref{appdyniso}. 

Second, the parameterisation of the set of Jacobian matrices at equilibria given in (\ref{eqJacMA1}) simplifies for dynamically nontrivial $(n,n+1,n)$ networks because $\mathrm{ker}\,\Gamma$ is one dimensional and includes a strictly positive vector. Let $u$ be some positive vector in $\mathrm{ker}\,\Gamma$, so that $\mathrm{ker}_{+}\,\Gamma = \{\mu \,u\colon \mu > 0\}$. Each Jacobian matrix appearing in (\ref{eqJacMA1}) now has the form $\mu\,Q\,\Delta_{z}$, where $Q:=\Gamma \Delta_{u} \Gamma_l^{\mathrm{t}}$ is a real $n \times n$ matrix. The set of Jacobian matrices evaluated at equilibria is then simply
\begin{equation}
\label{eqJacMA2}
\mathcal{J} = \{Q\,\Delta_{z}\,:\, z \in \mathbb{R}^n_{+}\}\,.
\end{equation}
Note that the parameter $\mu$ has disappeared altogether, which can be seen as a consequence of Remark~\ref{remJhom}. The Jacobian matrices thus form an $n$-parameter family of homogeneous polynomial matrices with a constant first factor and a second factor which is a diagonal matrix of positive unknowns. As we shall see, several properties of the matrices in $\mathcal{J}$ can be inferred from the constant matrix $Q$ alone.

\subsection{Degenerate and nondegenerate $\bm{(n,n+1,n)}$ networks}
We next prove that dynamically nontrivial, mass action, $(n, n+1, n)$ networks fall into two categories: ``nondegenerate'' networks which admit a single nondegenerate positive equilibrium for all values of rate constants; and ``degenerate'' networks which admit no positive equilibria for some values of rate constants and a continuum of degenerate positive equilibria for other values of the rate constants. 

Given an $(n,n+1,n)$ CRN having left stoichiometric matrix $\Gamma_l$, define the condition 
\begin{equation}
\tag{ND}
\mathrm{rank}\,[\Gamma_l^{\mathrm{t}}\,|\,-\mathbf{1}] = n+1.
\end{equation}
We define a dynamically nontrivial, $(n,n+1,n)$, mass action CRN to be nondegenerate if it satisfies condition (ND), and to be degenerate otherwise. This is justified by the following lemma.

\begin{lemma}
\label{lemnondegen}
Let $\mathcal{R}$ be a dynamically nontrivial, $(n,n+1,n)$, mass action CRN. Then
\begin{enumerate}[align=left,leftmargin=*]
\item If $\mathcal{R}$ satisfies condition (ND) then it admits a single positive equilibrium for all positive rate constants, and this equilibrium is nondegenerate.
\item If $\mathcal{R}$ fails condition (ND) then it admits no positive equilibria for almost all positive rate constants and a continuum of positive equilibria, all of which are degenerate, for an exceptional set of rate constants. 
\end{enumerate}
\end{lemma}

\begin{proof}
Let $\mathcal{R}$ have stoichiometric matrix $\Gamma$ and left stoichiometric matrix $\Gamma_l$. Recall that for given rate constants $\kappa \in \mathbb{R}^{n+1}_{+}$, the point $x \in \mathbb{R}^n_{+}$ is an equilibrium of $\mathcal{R}$ if and only if there exists $\mu>0$ such that
\begin{equation}
\label{eqeq}
\kappa \circ x^{\Gamma_l^\mathrm{t}} = \mu u\,,
\end{equation}
where $u$ is some arbitrary but fixed positive element in $\mathrm{ker}\,\Gamma$. Taking logarithms and rearranging, (\ref{eqeq}) is satisfied if and only if there exist $x \in \mathbb{R}^n_{+},\,\,\mu > 0$ such that $\Gamma_l^{\mathrm{t}}\ln\,x = \ln(\mu)\mathbf{1} + \ln\,u - \ln\,\kappa$, namely,
\begin{equation}
\label{eqMAdegen}
[\Gamma_l^{\mathrm{t}}\,|\,-\mathbf{1}]\left(\begin{array}{c}\ln\,x\\\ln\,\mu\end{array}\right) = \ln\,u - \ln\,\kappa\,.
\end{equation}

Let us now show that the condition (ND) is equivalent to nondegeneracy of all positive equilibria. Recall that each Jacobian matrix at a positive equilibrium has the form $J= Q \Delta_{z}$ where $Q:= \Gamma \Delta_{u} \Gamma_l^\mathrm{t}$ and $z \in \mathbb{R}^n_{+}$. Clearly, nonsingularity of $J$ does not depend on $z$: i.e., $J$ is nonsingular if and only if $Q$ is nonsingular. This, in turn, is equivalent to (ND). To see this we note that $\mathrm{rank}\,[\Gamma_l^{\mathrm{t}}\,|\,-\mathbf{1}] < n+1$ implies either that $\mathrm{rank}\,\Gamma_l^{\mathrm{t}} < n$ which immediately implies that $Q$ is singular; or that $\mathbf{1} \in \mathrm{im}\,\Gamma_l^{\mathrm{t}}$, in which case $Q$ is again singular because $\Delta_{u}\Gamma_l^\mathrm{t}$ maps some nonzero vector $v$ to $u$, which in turn lies in $\mathrm{ker}\,\Gamma$; and thus $v \in \mathrm{ker}\,Q$. On the other hand, if $\mathrm{rank}\,[\Gamma_l^{\mathrm{t}}\,|\,-\mathbf{1}] = n+1$, then (i) $\mathrm{rank}\,\Delta_{u}\Gamma_l^\mathrm{t} = n$, and (ii) $\mathrm{im}\,(\Delta_{u}\Gamma_l^\mathrm{t}) \cap \mathrm{ker}\,\Gamma = \{0\}$; and so $Q$ (and hence $J$) must be nonsingular. 

We now show that nondegenerate and degenerate networks indeed have the properties claimed.
\begin{enumerate}[align=left,leftmargin=*]
\item {\bf Nondegenerate networks.} Suppose (ND) holds, so that $[\Gamma_l^{\mathrm{t}}\,|\,-\mathbf{1}]$ has rank $n+1$ and is hence invertible. For each $\kappa \in \mathbb{R}^{n+1}_{+}$, we can then solve (\ref{eqMAdegen}) to find $\ln\,x$ and $\ln\,\mu$ (and hence positive $x$ and $\mu$) satisfying (\ref{eqMAdegen}):
\begin{equation}
\label{eqxmukappa}
\left(\begin{array}{c}\ln\,x\\\ln\,\mu\end{array}\right) = [\Gamma_l^{\mathrm{t}}\,|\,-\mathbf{1}]^{-1}(\ln\,u - \ln\,\kappa)\,, \quad \mbox{namely} \quad \left(\begin{array}{c}x\\\mu\end{array}\right) = (u/\kappa)^{[\Gamma_l^{\mathrm{t}}\,|\,-\mathbf{1}]^{-1}}\,.
\end{equation}
Since the solution is unique, the system has a unique positive equilibrium for any positive $\kappa$. From the discussion of the Jacobian matrices above, this equilbrium is nondegenerate. 

\item {\bf Degenerate networks.} Suppose (ND) fails, so that $\mathrm{rank}\,[\Gamma_l^{\mathrm{t}}\,|\,-\mathbf{1}] < n+1$. If we choose $\kappa$ such that $\ln\,u - \ln\,\kappa \not\in \mathrm{im}\,[\Gamma_l^{\mathrm{t}}\,|\,-\mathbf{1}]$, which is the case for almost all $\kappa$ as $\ln\,\kappa$ varies over all of $\mathbb{R}^{n+1}$, then (\ref{eqMAdegen}) cannot be satisfied and the network has no positive equilibria. On the other hand, consider any $\kappa$ such that $\ln\,u - \ln\,\kappa \in \mathrm{im}\,[\Gamma_l^{\mathrm{t}}\,|\,-\mathbf{1}]$, i.e., such that there exist $(x,\mu)$ satisfying (\ref{eqMAdegen}). Then we can add to $(\ln\,x, \ln\,\mu)^\mathrm{t}$ any element of $\mathrm{ker}\,[\Gamma_l^{\mathrm{t}}\,|\,-\mathbf{1}]$ and again satisfy (\ref{eqMAdegen}) with the same rate constants $\kappa$. Since any nonzero element of $\mathrm{ker}\,[\Gamma_l^{\mathrm{t}}\,|\,-\mathbf{1}]$ must have at least one nonzero entry in its first $n$ entries, there is clearly a continuum of positive equilibria of the system for such values of $\kappa$. From the discussion of Jacobian matrices above, every such equilibrium must be degenerate.
\end{enumerate}
This concludes the proof.
\end{proof}

\begin{remark}[The condition for nondegeneracy]
\label{remND}
We observe that condition (ND) is equivalent to affine independence of the reactant complexes of an $(n,n+1,n)$ CRN. We also observe from the proof of Lemma~\ref{lemnondegen} that for a dynamically nontrivial $(n,n+1,n)$ network condition (ND) is equivalent to nonsingularity of the matrix $Q = \Gamma \Delta_{u} \Gamma_l^\mathrm{t}$, where $u$ is any element of $\mathrm{ker}_{+}\Gamma$.
\end{remark}

\begin{remark}[Degenerate CRNs]
\label{remdegen}
Going beyond $(n, n+1, n)$ networks, it is natural to refer to any dynamically nontrivial, mass action CRN all of whose positive equilibria are degenerate as a degenerate network. Clearly a sufficient (but not necessary) condition for a dynamically nontrivial CRN to be degenerate is if $\mathrm{rank}\,\Gamma_l < \mathrm{rank}\,\Gamma$.
\end{remark}

It is algorithmically straightforward to check condition (ND) and find out if an $(n, n+1, n)$ network is degenerate. Consequently, we find the following:
\begin{lemma}
\label{lems3r4nondegen}
Out of the $14670$ dynamically nontrivial, bimolecular $(3,4,3)$ CRNs (Lemma~\ref{lems3r4}), exactly $10853$ are nondegenerate. These fall into $6486$ dynamically non-equivalent classes.
\end{lemma}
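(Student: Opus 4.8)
The statement is established by a direct, exact computation on the enumerated networks, and the plan has two parts.

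First, I would take the explicit list of $14{,}670$ dynamically nontrivial, bimolecular $(3,4,3)$ networks underlying Lemma~\ref{lems3r4} (generated by the methods of \cite{banajiCRNcount}) and, for each one, test condition (ND) of Lemma~\ref{lemnondegen}: whether the integer $4 \times 4$ matrix $[\Gamma_l^{\mathrm{t}}\,|\,-\mathbf{1}]$ has rank $4$. Since relabelling species merely permutes the rows of $\Gamma_l$, this rank is isomorphism-invariant, so the test is well-defined on isomorphism classes; and since ``nondegenerate'' is a property of the mass action ODEs themselves, it is in fact constant on dynamical-equivalence classes, which makes the second assertion meaningful. By Lemma~\ref{lemnondegen} the networks passing the test are exactly the nondegenerate ones, and counting them gives $10{,}853$. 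The computation is over $\mathbb{Z}$, so there is nothing numerically delicate about it.

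Second, I would partition these $10{,}853$ networks into dynamical-equivalence classes. Here the key simplification, already exploited for Lemma~\ref{lems3r4}, is that on dynamically nontrivial $(n,n+1,n)$ networks dynamical equivalence reduces to the simple, easily checked criterion spelled out in Appendix~\ref{appdyniso}, expressed directly in terms of the stoichiometric and left stoichiometric matrices. Bucketing the networks first by cheap isomorphism-invariant data, then applying the criterion within buckets, and finally forming the transitive closure, yields $6{,}486$ classes.

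Both tallies are produced by the scripts at \cite{muradgithub, balazsgithub}; beyond Lemma~\ref{lemnondegen} and the criterion of Appendix~\ref{appdyniso}, nothing further is needed to certify that the code computes the intended objects. The only genuine obstacle is bookkeeping rather than mathematics: ensuring that the input is \emph{exactly} the list of Lemma~\ref{lems3r4} with no omissions or duplicates, that the equivalence relation is applied transitively so that $6{,}486$ counts classes rather than merely equivalent pairs, and that the isomorphism-based bucketing used to accelerate the equivalence test neither merges distinct classes nor splits a single one.
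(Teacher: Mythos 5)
Your proposal matches the paper's approach exactly: the paper establishes this lemma purely computationally, by checking condition (ND) of Lemma~\ref{lemnondegen} on the enumerated list from Lemma~\ref{lems3r4} and then grouping by the simple-equivalence criterion of Appendix~\ref{appdyniso}, with the code at \cite{muradgithub, balazsgithub}. Your additional remarks on isomorphism-invariance of the rank test and the bookkeeping caveats are sensible but not part of the paper's (essentially absent) written proof.
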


We illustrate some of the computations described above via an example.
\begin{example}[Basic analysis of a network]
\label{exnondegen}
Consider the following CRN:
\[
\mathsf{X} \rightarrow 2\mathsf{X}, \quad \mathsf{X}+\mathsf{Y} \rightarrow 2\mathsf{Z}, \quad \mathsf{X}+\mathsf{Z} \rightarrow \mathsf{Y}, \quad \mathsf{Y}+\mathsf{Z} \rightarrow \mathsf{Y}\,.
\]
The stoichiometric matrix and left stoichiometric matrix of the network are
\[
\Gamma = \left(\begin{array}{rrrr}1&-1&-1&0\\0&-1&1&0\\0&2&-1&-1\end{array}\right) \quad \mbox{and} \quad \Gamma_l = \left(\begin{array}{cccc}1&1&1&0\\0&1&0&1\\0&0&1&1\end{array}\right)\,.
\]
As $\Gamma$ has rank $3$, this is a $(3,4,3)$ network. Choosing $u=(2,1,1,1)^\mathrm{t} \in \mathrm{ker}\,\Gamma$, we see that the network is dynamically nontrivial. The matrix 
\[
[\Gamma_l^\mathrm{t}\,|\,-\mathbf{1}] = \left(\begin{array}{rrrr}1&0&0&-1\\1&1&0&-1\\1&0&1&-1\\0&1&1&-1\end{array}\right)
\]
is easily checked to have rank $4$, confirming that the network is nondegenerate. With $u=(2,1,1,1)^\mathrm{t}$, we obtain $Q$ and the general expression for the Jacobian matrix in terms of the variables $z_i = 1/x_i$:
\[
Q = \Gamma \Delta_{u} \Gamma_l^\mathrm{t} = \left(\begin{array}{rrr}0&-1&-1\\0&-1&1\\1&1&-2\end{array}\right), \quad J=Q\,\Delta_z = \left(\begin{array}{ccc}0&-z_2&-z_3\\0&-z_2&z_3\\z_1&z_2&-2z_3\end{array}\right)\,.
\]
\end{example}

\subsection{A natural recoordinatisation}
\label{secrecoord}

From the proof of Lemma~\ref{lemnondegen}, we observe that in nondegenerate, dynamically nontrivial, $(n,n+1,n)$, mass action networks, we can easily pass between expressions for the Jacobian matrices at equilibria in terms of variables $x_i$ (or their inverses $z_i$), and in terms of the rate constants $\kappa$. Defining $G:=[\Gamma_l^{\mathrm{t}}\,|\,-\mathbf{1}]^{-1}$, we found that
\[
\left(\begin{array}{c}x\\\mu\end{array}\right) = (u/\kappa)^G\,, \quad \mbox{or, equivalently} \quad\left(\begin{array}{c}z\\\mu^{-1}\end{array}\right) = (\kappa/u)^G\,.
\]
Thus we can, if desired, substitute $z_i = ((\kappa/u)^{G})_i$, $i = 1, \ldots, n$ in the expression for a general Jacobian matrix of the system at an equilbrium, namely $J=Q\Delta_z$, to obtain this matrix in terms of the rate constants.

These observations lead to a natural recoordinatisation which brings the unique equilibrium to $\mathbf{1}$. The recoordinatised form considerably simplifies some of the analysis of bifurcations in these networks. Define $x^*, \mu^*$ via
\[
\left(\begin{array}{c}x^*\\\mu^*\end{array}\right) = (u/\kappa)^G\,,
\]
and define new variables $X = x/x^* \in \mathbb{R}^n_{+}$. Define $G' = G(\{1,\ldots, n\}|\{1, \ldots, n+1\})$ (i.e., $G'$ is the top $n$ rows of $G$), and note that $\Gamma_l^\mathrm{t}G' = I + \mathbf{1}v^\mathrm{t}$, where $I$ is the $(n+1) \times (n+1)$ identity matrix, and $v^\mathrm{t}$ is the last row of $G$. (The product $\mathbf{1}v^t$ is a rank one $(n+1) \times (n+1)$ matrix.) Set $G'' = G'- \mathbf{1}v^t$, i.e., subtract the last row of $G$ from each of the first $n$ rows of $G$ to get the $n \times (n+1)$ matrix $G''$. With these preliminaries we can now write the differential equation (\ref{eqMA}), namely $\dot x = \Gamma (\kappa \circ x^{\Gamma_l^\mathrm{t}})$, in terms of the new variables $X$:
\begin{eqnarray*}
\dot X = \frac{1}{x^*}\dot x &=& \frac{1}{x^*} \circ \Gamma (\kappa \circ (x^* \circ X)^{\Gamma_l^\mathrm{t}})\\
&=& (\kappa/u)^{G'}\circ \Gamma (\kappa \circ ((u/\kappa)^{G'})^{\Gamma_l^\mathrm{t}} \circ X^{\Gamma_l^\mathrm{t}})\\
&=& (\kappa/u)^{G'}\circ \Gamma (\kappa \circ (u/\kappa)^{\Gamma_l^\mathrm{t}G'} \circ X^{\Gamma_l^\mathrm{t}})\\
&=& (\kappa/u)^{G'}\circ \Gamma (\kappa \circ (u/\kappa) \circ (u/\kappa)^{\mathbf{1}v^\mathrm{t}} \circ X^{\Gamma_l^\mathrm{t}}) \quad \mbox{(using $\Gamma_l^\mathrm{t}G' = I + \mathbf{1}v^\mathrm{t}$)}\\
&=& (\kappa/u)^{G'}\circ \Gamma (u \circ ((u/\kappa)^{v^\mathrm{t}})^{\mathbf{1}} \circ X^{\Gamma_l^\mathrm{t}})\,\\
&=& (u/\kappa)^{v^\mathrm{t}}(\kappa/u)^{G'}\circ \Gamma (u \circ X^{\Gamma_l^\mathrm{t}})\\
&=& (\kappa/u)^{G''}\circ \Gamma (u \circ X^{\Gamma_l^\mathrm{t}})\,.
\end{eqnarray*}
Note that $(\kappa/u)^{G''}$ is a vector of homogeneous generalised monomials in $\kappa$. Note also that $\mathbf{1}$ is now clearly an equilibrium of the network, since $\Gamma u = 0$. Since $G''$ has rank $n$ and hence defines a surjective linear transformation, it easily follows that the map $\kappa \mapsto (\kappa/u)^{G''}$ is surjective when regarded as a map from $\mathbb{R}^{n+1}_{+}$ to $\mathbb{R}^n_{+}$. In other words, if we consider the elements of $(\kappa/u)^{G''}$ as new parameters, then these vary over $\mathbb{R}^n_{+}$.

\begin{example}[Illustrating the natural coordinate transformation]
We return to the nondegenerate, dynamically nontrivial $(3,4,3)$ network in Example~\ref{exnondegen}, namely
\[
\mathsf{X} \rightarrow 2\mathsf{X}, \quad \mathsf{X}+\mathsf{Y} \rightarrow 2\mathsf{Z}, \quad \mathsf{X}+\mathsf{Z} \rightarrow \mathsf{Y}, \quad \mathsf{Y}+\mathsf{Z} \rightarrow \mathsf{Y}\,.
\]
The network gives rise to the mass action ODE system
\[
\left(\begin{array}{c}\dot x\\\dot y \\\dot z\end{array}\right) = \left(\begin{array}{rrrr}1&-1&-1&0\\0&-1&1&0\\0&2&-1&-1\end{array}\right)\left(\begin{array}{c}\kappa_1x\\\kappa_2xy\\\kappa_3xz\\\kappa_4yz\end{array}\right)\,,
\]
where $x,y$ and $z$ are the concentrations of $\mathsf{X}$, $\mathsf{Y}$ and $\mathsf{Z}$ respectively. To obtain the recoordinatised system, we calculate
\[
G=[\Gamma_l^\mathrm{t}\,|\,-\mathbf{1}]^{-1} = \left(\begin{array}{rrrr}-1&1&1&-1\\-1&1&0&0\\-1&0&1&0\\-2&1&1&-1\end{array}\right)\, \quad \mbox{and} \quad G'' = \left(\begin{array}{rrrr}1&0&0&0\\1&0&-1&1\\1&-1&0&1\end{array}\right)\,.
\]
Choosing $u=(2,1,1,1)^\mathrm{t} \in \mathrm{ker}\,\Gamma$, the recoordinatised system takes the form
\[
\left(\begin{array}{c}\dot X\\\dot Y \\\dot Z\end{array}\right) = \left(\begin{array}{c}\kappa_1/2\\\kappa_1\kappa_4/(2\kappa_3)\\\kappa_1\kappa_4/(2\kappa_2)\end{array}\right) \circ \left(\begin{array}{rrrr}1&-1&-1&0\\0&-1&1&0\\0&2&-1&-1\end{array}\right)\,\left(\begin{array}{c}2X\\XY\\XZ\\YZ\end{array}\right)\,.
\]
where $X,Y$ and $Z$ are the rescaled concentrations of $\mathsf{X}$, $\mathsf{Y}$ and $\mathsf{Z}$ respectively.
\end{example}

\section{Conditions for Hopf bifurcation}
\label{secHopfgeneral}

We next turn to the task of writing down conditions which rule out Hopf bifurcation in the great majority of systems of interest to us here. We begin in some generality. 

\subsection{Necessary conditions for Hopf bifurcation}

Two necessary conditions for nondegenerate Hopf bifurcation in a family of ODEs are:
\begin{enumerate}[align=left,leftmargin=*]
\item[H1.] The family admits nondegenerate equilibria. 
\item[H2.] The Jacobian matrices of the family, evaluated at nondegenerate equilibria, admit nonreal eigenvalues with negative, zero and positive real parts.
\end{enumerate}

\begin{remark}
Conditions H1 and H2 above are clearly not sufficient for nondegenerate Hopf bifurcation: for this, we also need to confirm the nondegeneracy and transversality conditions associated with the bifurcation, discussed further in Section~\ref{sec343hopf}. But, to preview what we will show: amongst bimolecular, $(3,4,3)$, mass action CRNs, conditions H1 and H2 are almost sufficient to guarantee nondegenerate Hopf bifurcation. Remarkably, of the 6486 dynamically non-equivalent, nondegenerate, dynamically nontrivial, bimolecular $(3,4,3)$ networks (see Lemma~\ref{lems3r4nondegen}), there is precisely {\em one} which satisfies H1 and H2 but fails to admit nondegenerate Hopf bifurcation with mass action kinetics.
\end{remark}

We now write down conditions, termed N1--N4 below, which allow us to rule out H2. We first present the conditions in generality, before observing how they can simplify in the case of nondegenerate $(n,n+1,n)$ CRNs.

Consider a parameterised ODE system 
\begin{equation}
\label{eq0}
\dot x = f(x,\alpha)\,.
\end{equation}
We assume that the variables $x$ and the parameters $\alpha$ vary on $\mathbb{R}^n_{+}$ and $\mathbb{R}^k_{+}$ respectively, and that $f$ is $C^1$ on $\mathbb{R}^n_{+} \times \mathbb{R}^k_{+}$. Since we are thinking of $\alpha$ as rate constants, we also assume that $f(x, c\alpha) = c f(x,\alpha)$ for any $c>0$. Note that this condition implies that if $f(x,\alpha)=0$, then $f(x,c\alpha)=0$ for any positive $c$.

Let $D_xf(x,\alpha)$ be the Jacobian matrix of (\ref{eq0}), and note that we can regard $D_xf(\cdot,\cdot)$ as a continuous function on $\mathbb{R}^n_{+} \times \mathbb{R}^k_{+}$ with image in $\mathbb{R}^{n \times n}$. Let $E \subseteq \mathbb{R}^n_{+} \times \mathbb{R}^k_{+}$ be the set of positive equilibria of \eqref{eq0}, and write $J := \left. D_xf\right|_E$. For $J$ (or any other such function) we abuse terminology and say that $J$ belongs to $\mathcal{M} \subseteq \mathbb{R}^{n \times n}$ if $\mathrm{im}\,J \subseteq \mathcal{M}$. For example, we may say that ``$J$ is nonsingular'' to mean that all matrices in the image of $J$ are nonsingular. We may also write $\mathrm{det}\,J \neq 0$ to mean the same thing.

The following are computable conditions sufficient to guarantee that condition H2 cannot be satisfied, and thus to rule out Hopf bifurcation in (\ref{eq0}). The list is not exhaustive, but these are the conditions which prove most useful here. 
\begin{enumerate}[align=left,leftmargin=*]
\item[N1.] $J$ lies in the closure of the Hurwitz matrices, namely $J$ has no eigenvalues with positive real parts. In general, this may be hard to confirm; but here we find that $J$ frequently satisfies a simple condition which guarantees that this holds.

\item[N2.] $J^2$ is a $P_0$ matrix. As $P_0$ matrices have no negative real eigenvalues, and the eigenvalues of $J^2$ are the squares of those of $J$, this means that $J$ cannot have a nonzero imaginary eigenvalue.  

\item[N3.] $J^{[2]}$, the second additive compound matrix of $J$, is nonsingular; or $n\leq 3$ and $J^{[2]}$ is identically singular. Recall that if $J^{[2]}$ is nonsingular then no pair of eigenvalues can sum to zero and, in particular, $J$ cannot have a nonzero imaginary eigenvalue. If $n\leq 3$ and $J^{[2]}$ is identically singular, then this too is sufficient to rule out Hopf bifurcation; for in this case although a pair of imaginary eigenvalues might occur, the {\em passage} of a pair of nonreal eigenvalues through the imaginary axis is forbidden.

\item[N4.] $\mathrm{det}(J^2+I)$ is positive. Note that $\mathrm{det}(J^2+I)$ is the sum of the squares of the real and imaginary parts of $\mathrm{det}(J+iI)$ and so is automatically nonnegative. The condition that it is positive is thus precisely the condition that $J$ cannot have a pair of eigenvalues $\pm i$. This is sufficient to rule out Hopf bifurcation, in light of our assumption that $f$ (and hence $J$) is positively homogeneous in $\alpha$. 
\end{enumerate}

\begin{remark}
Condition N2 implies condition N4, but not vice versa; we list both conditions, because condition N2 can be considerably easier to check than condition N4. As we will see below, for the networks we treat here, N2 can be checked using arithmetic alone. 
\end{remark}

\begin{remark}
\label{remdegenosci}
A network may satisfy some of the conditions N1--N4, forbidding Hopf bifurcation, but nevertheless permit degenerate oscillation. The best-known example is the Lotka reactions $\mathsf{X} \rightarrow 2\mathsf{X}, \,\, \mathsf{X}+\mathsf{Y} \rightarrow 2\mathsf{Y}, \,\, \mathsf{Y} \rightarrow 0$: the mass action Jacobian matrix $J$ at the unique positive equilibrium lies in the closure of the Hurwitz matrices, and $J^{[2]}$ is identically singular; but for all values of the rate constants, the corresponding mass action system has a first integral and all nonconstant solutions are periodic. 
\end{remark}

\subsection{Conditions for Hopf bifurcation in $(n,n+1,n)$ networks}

Let us now turn to the special case of nondegenerate $(n,n+1,n)$ networks. In this case, some of the conditions N1--N4 ruling out Hopf bifurcation take a simpler form. 

Recall, from (\ref{eqJacMA2}), that the mass action Jacobian matrix at equilibria of a nondegenerate $(n,n+1,n)$ CRN takes the form $J=Q \Delta_z$ with $Q$ a constant matrix. In fact, we can always choose $Q$ to be an integer matrix. We have the following lemma.
\begin{lemma}
\label{lemspecnohopf}
Consider a nondegenerate $(n,n+1,n)$ CRN. Then
\begin{enumerate}
\item If $Q$ is negative semidefinite, then $J$ lies in the closure of the Hurwitz matrices, and hence nondegenerate Hopf bifurcation is ruled out.
\item The matrix $J^2$ is a $P_0$ matrix if and only if $Q$ is sign-symmetric. In this case, nondegenerate Hopf bifurcation is again ruled out.
\end{enumerate}
\end{lemma}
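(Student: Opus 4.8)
The plan is to exploit the factorisation $J = QD_z$ together with the characterisations of ``negative semidefinite'' and ``$P_0$'' recalled in the Matrices subsection. For part (1), let $z \in \mathbb{R}^n_+$ be arbitrary and set $D := D_z$, which is symmetric positive definite with symmetric positive definite square root $D^{1/2}$. Then $J = QD = D^{-1/2}(D^{1/2}QD^{1/2})D^{1/2}$, so $J$ is similar to $\widetilde{J} := D^{1/2}QD^{1/2}$ and has the same eigenvalues. For any $y \in \mathbb{R}^n$, $y^{\mathrm t}\widetilde{J}y = (D^{1/2}y)^{\mathrm t}Q(D^{1/2}y) \le 0$ since $Q$ is negative semidefinite and $D^{1/2}y$ ranges over $\mathbb{R}^n$; hence $\widetilde{J}$ is negative semidefinite. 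By the remark in the Matrices subsection that negative semidefinite matrices lie in the closure of the Hurwitz matrices, $\widetilde{J}$ — and therefore $J$ — has all eigenvalues with nonpositive real part, i.e.\ $J$ lies in the closure of the Hurwitz matrices. This rules out condition H2 (it forbids eigenvalues with positive real part), and hence rules out nondegenerate Hopf bifurcation; this is exactly condition N1.

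For part (2), I would again use the similarity $J \sim \widetilde{J} = D^{1/2}QD^{1/2}$, which gives $J^2 \sim \widetilde{J}^2 = D^{1/2}QDQD^{1/2}$, so $J^2$ is a $P_0$ matrix iff $\widetilde{J}^2$ is. The key computation is to show that $\widetilde{J}^2$ is a $P_0$ matrix for every $z \in \mathbb{R}^n_+$ precisely when $Q$ is sign-symmetric. The forward-ish direction uses the Cauchy--Binet formula: for $\alpha \subseteq \{1,\dots,n\}$ with $|\alpha| = p$, expand the principal minor $(\widetilde J^2)[\alpha|\alpha] = (\widetilde J \cdot \widetilde J)[\alpha|\alpha] = \sum_{\beta} \widetilde J[\alpha|\beta]\,\widetilde J[\beta|\alpha]$, the sum over $p$-subsets $\beta$. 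Writing $\widetilde J = D^{1/2}QD^{1/2}$ and pulling the diagonal factors through each minor (each $\widetilde J[\alpha|\beta]$ equals $Q[\alpha|\beta]$ times a positive product of $z_i$'s, the same positive factor $\prod_{i\in\alpha}\sqrt{z_i}\prod_{j\in\beta}\sqrt{z_j}$ appearing in both $\widetilde J[\alpha|\beta]$ and $\widetilde J[\beta|\alpha]$), one gets
\[
(\widetilde J^2)[\alpha|\alpha] = \sum_{\beta} c_{\alpha,\beta}(z)\, Q[\alpha|\beta]\,Q[\beta|\alpha]
\]
with each $c_{\alpha,\beta}(z) > 0$. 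If $Q$ is sign-symmetric, every summand is $\ge 0$, so the principal minor is $\ge 0$ and $J^2$ is $P_0$. Conversely, if $Q$ fails to be sign-symmetric, there exist $\alpha,\beta$ with $Q[\alpha|\beta]\,Q[\beta|\alpha] < 0$; I would then choose $z$ so that the coefficient $c_{\alpha,\beta}(z)$ dominates (scaling the $z_i$ for $i \in \alpha \setminus \beta$ and $i \in \beta \setminus \alpha$ appropriately, noting the ``diagonal'' term $\beta = \alpha$ contributes $c_{\alpha,\alpha}(z)\,Q[\alpha|\alpha]^2 \ge 0$ and all other cross terms can be made negligible) to force $(\widetilde J^2)[\alpha|\alpha] < 0$, so $J^2$ is not $P_0$. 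Once $J^2$ is $P_0$, it has no negative real eigenvalue; since the eigenvalues of $J^2$ are the squares of those of $J$, $J$ has no nonzero purely imaginary eigenvalue, which contradicts H2 and rules out nondegenerate Hopf bifurcation — this is condition N2.

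The main obstacle is the converse direction of part (2): making the sign of a single principal minor of $\widetilde J^2$ go negative by a clever choice of $z$, while controlling the other Cauchy--Binet terms. The bookkeeping of which $z_i$ appear in $c_{\alpha,\beta}(z)$ (they are governed by the symmetric difference of $\alpha$ and $\beta$) and a limiting/scaling argument isolating the bad pair $\{Q[\alpha|\beta],Q[\beta|\alpha]\}$ from the nonnegative diagonal term should settle it, but it requires care; I would streamline it by first reducing to a smallest ``witness'' pair $(\alpha,\beta)$ and, if convenient, invoking the known equivalence (for Hadamard-type products) between $J^2$ being $P_0$ for all positive diagonal $D$ and sign-symmetry of $Q$. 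The positivity of all the coefficients $c_{\alpha,\beta}(z)$, which makes the forward direction immediate, is the point that makes this clean.
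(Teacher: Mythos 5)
Your proposal is correct and follows essentially the same route as the paper: part (1) via the diagonal similarity $QD_z \sim D_{\sqrt z}QD_{\sqrt z}$ and the negative semidefiniteness of the latter, and part (2) via a Cauchy--Binet expansion of the principal minors of $J^2$ into positively weighted products $Q[\alpha|\beta]\,Q[\beta|\alpha]$, with the converse obtained by a scaling argument that isolates a sign-violating pair (the paper's concrete choice is $z_i=1$ for $i\in\beta$ and $z_i$ small for $i\notin\beta$, which makes every competing coefficient vanish in the limit). Your extra symmetrisation $\widetilde J = D^{1/2}QD^{1/2}$ in part (2) is harmless since diagonal conjugation preserves principal minors, but the paper works directly with $J=QD_z$, using that the only nonzero minors of $D_z$ are the principal ones, so $J[\alpha|\beta]=Q[\alpha|\beta]D_z[\beta|\beta]$.
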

\begin{proof}
Let $J=Q\Delta_z$ where $z \in \mathbb{R}^n_{+}$.
\begin{enumerate}[align=left,leftmargin=*]
\item If $Q$ is negative semidefinite, then clearly $\Delta_{\sqrt{z}}\,Q\,\Delta_{\sqrt{z}}$ is negative semidefinite for all $z \in \mathbb{R}^n_{+}$, and hence $\Delta_{\sqrt{z}}\,Q\,\Delta_{\sqrt{z}}$ has no eigenvalues with positive real part. However 
\[
Q\Delta_z = \Delta_{1/\sqrt{z}}\,\Delta_{\sqrt{z}}\,Q\,\Delta_{\sqrt{z}}\,\Delta_{\sqrt{z}}
\]
is similar to $\Delta_{\sqrt{z}}\,Q\,\Delta_{\sqrt{z}}$; hence $Q\Delta_z$ can have no eigenvalue with positive real part.
\item Observe that $J[\alpha|\beta] = Q[\alpha|\beta]\,\Delta_z[\beta|\beta]$, as $\Delta_z$ is a diagonal matrix and hence all minors of the form $\Delta_z[\gamma|\beta]$ other than $\Delta_z[\beta|\beta]$ are zero. Hence, by the Cauchy--Binet formula,
\[
J^2[\alpha|\alpha] = \sum_\beta J[\alpha|\beta]J[\beta|\alpha] = \Delta_z[\alpha|\alpha]\sum_\beta Q[\alpha|\beta]\,\Delta_z[\beta|\beta]\,Q[\beta|\alpha]\,.
\]
It is now clear that if $J^2[\alpha|\alpha]$ fails to be nonnegative for some $\alpha$, then (since $\Delta_z[\beta|\beta]>0$ for all $\beta$) there must exist $\beta$ such that $Q[\alpha|\beta]Q[\beta|\alpha]<0$. In the other direction, as $\Delta_z[\beta|\beta] = \prod_{i \in \beta}z_i$, if $Q[\alpha|\beta]Q[\beta|\alpha]<0$ for some $\alpha,\beta$, then provided we choose $\tilde{z} \in \mathbb{R}^n_{+}$, such that $\tilde{z}_i=1$ for $i \in \beta$ and $z_i$ is sufficiently small for $i \not \in \beta$, then $\sum_\beta Q[\alpha|\beta]\,\Delta_{\tilde{z}}[\beta|\beta]\,Q[\beta|\alpha]<0$, and consequently $J^2[\alpha|\alpha]$ fails to be nonnegative for this choice of $\tilde{z} \in \mathbb{R}^n_{+}$. 
\end{enumerate}
This completes the proof.
\end{proof}

\begin{remark}[Conditions N1--N4 in the case of $(n,n+1,n)$ CRNs]
\label{remN1N4}
From Lemma~\ref{lemspecnohopf}, if $-(Q+Q^\mathrm{t})$ is a $P_0$ matrix, then condition N1 holds, while condition N2 is equivalent to sign-symmetry of $Q$. Both of these conditions involve only arithmetic computations. For $(n,n+1,n)$ CRNs, $\mathrm{det}\,J^{[2]}$ is a homogeneous polynomial of degree $n$ in the $n$ positive indeterminates $z_i$; when $n=3$ confirming whether $\mathrm{det}\,J^{[2]}$ has constant sign (condition N3) is, for most networks, a rapid and trivial computation. Finally, for $(n,n+1,n)$ CRNs, $\mathrm{det}(J^2+I)$ is a polynomial of degree $2n$ in the $n$ positive indeterminates $z_i$: in the case $n=3$ confirming either that this polynomial is positive on $\mathbb{R}^n_{+}$ (condition N4), or that it can take the value $0$, is often rapid. However, in some cases, this computation requires more effort.
\end{remark}

Computational implementation of the conditions detailed above gives us the following theorem about bimolecular $(3,4,3)$ systems. The code for the computations is available at \cite{muradgithub}.
\begin{thm}
\label{thms3r4Hopfpotential}
Checking conditions N1--N4 as outlined in Remark~\ref{remN1N4} we find that:
\begin{enumerate}
\item[(A)] Out of the $10853$ nondegenerate bimolecular $(3,4,3)$ CRNs identified in Lemma~\ref{lems3r4nondegen}, $Q$ fails to be negative semidefinite in $1599$. These fall into $1051$ dynamically non-equivalent classes. 

\item[(B)] Out of the $1599$ $(3,4,3)$ CRNs identified in (A), in $779$ $Q$ fails to be sign symmetric, and hence $J^2$ fails to be a $P_0$ matrix. These fall into $513$ dynamically non-equivalent classes.

\item[(C)] Out of the $779$ $(3,4,3)$ CRNs identified in (B), in $231$ we find that $\mathrm{det}\,J^{[2]}$ fails to have constant sign. These fall into $149$ dynamically non-equivalent classes.

\item[(D)] Out of the $231$ $(3,4,3)$ CRNs identified in (C), in $138$ we are unable to confirm that $\mathrm{det}(J^2+I) >0$ on the positive orthant. Moreover, for these $138$ we can easily confirm that $\mathrm{det}(J^2+I)=0$ occurs somewhere on the positive orthant. These fall into $87$ dynamically non-equivalent classes.
\end{enumerate}

\end{thm}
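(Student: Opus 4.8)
The statement is essentially a report on the output of a filtering pipeline, so the "proof" is really a description of the algorithmic procedure together with a justification that each filtering step is both sound (it only discards networks for which Hopf bifurcation is genuinely excluded by the relevant condition among N1--N4) and correctly implemented. The plan is to proceed exactly along the four nested sieves (A)--(D), at each stage starting from the explicit list produced by the previous stage and applying one more of the conditions from Remark~\ref{remN1N4}. Throughout, I would lean on Lemma~\ref{lemnondegen} (so that each network really is of the form $J = QD_z$ with $Q$ a fixed integer matrix obtainable from $\Gamma$, $\Gamma_l$, and a chosen $u \in \ker\Gamma$), on Lemma~\ref{lemspecnohopf} (which converts N1 and N2 into the purely arithmetic tests ``$-(Q+Q^{\mathrm t})$ is a $P_0$ matrix'' and ``$Q$ is sign-symmetric''), and on the general discussion of N3 and N4 for the remaining two steps.

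Concretely: (A) For each of the $10{,}853$ nondegenerate $(3,4,3)$ CRNs, compute $Q = \Gamma D_u \Gamma_l^{\mathrm t}$, form $Q + Q^{\mathrm t}$, and test all principal minors for nonnegativity; the networks failing this test are exactly those for which $Q$ is not negative semidefinite, and by Lemma~\ref{lemspecnohopf}(1) every network passing it has $J$ in the closure of the Hurwitz matrices, hence no Hopf bifurcation. This is finitely many $3\times 3$ determinant evaluations. (B) On the surviving $1{,}599$, test sign-symmetry of $Q$ — i.e.\ check $Q[\alpha|\beta]\,Q[\beta|\alpha] \ge 0$ for all pairs $\alpha,\beta$ with $|\alpha|=|\beta|$ — and by Lemma~\ref{lemspecnohopf}(2) a network passing this test has $J^2$ a $P_0$ matrix and again forbids Hopf bifurcation; $779$ fail. (C) On those $779$, form the $3\times 3$ second additive compound $J^{[2]}$, whose determinant is a homogeneous cubic in $z_1,z_2,z_3$; decide whether this polynomial has constant sign on $\mathbb{R}^3_+$ (equivalently, inspect the signs of its coefficients and, where these are mixed, certify sign-indefiniteness by exhibiting two positive points of opposite sign, or certify constant sign, e.g.\ via a positivity certificate). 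By the N3 discussion, constant sign of $\det J^{[2]}$ rules out the passage of a nonreal conjugate pair through the imaginary axis, hence rules out Hopf; $231$ fail. (D) On those $231$, form $\det(J^2+I)$, a degree-$6$ polynomial in $z$ which is a sum of two squares and hence $\ge 0$; attempt to certify strict positivity on $\mathbb{R}^3_+$, and where this certification fails, exhibit a positive point where it vanishes (using that $\det(J^2+I)=0$ iff $J$ has eigenvalues $\pm i$, together with positive homogeneity in $\kappa$). This leaves $138$ networks, in $87$ dynamically non-equivalent classes, which survive all four filters.

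The bookkeeping of dynamical-equivalence class counts at each stage follows by applying the easily-checked criterion for dynamical equivalence of nondegenerate $(n,n+1,n)$ networks (Appendix~\ref{appdyniso}) to each surviving list and counting orbits; since the conditions N1--N4 depend only on the mass action differential equations, they are constant on dynamical-equivalence classes, so the class counts are automatically consistent with the network counts.

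The main obstacle is step (D): unlike (A) and (B), which are pure arithmetic, and (C), which is a constant-sign question for a cubic and so almost always settled by coefficient signs alone, deciding positivity of the degree-$6$ polynomial $\det(J^2+I)$ on the open orthant is a genuine real-quantifier-elimination / positivity problem that, as Remark~\ref{remN1N4} already warns, ``requires more effort'' for some networks — one must either produce an explicit positivity certificate (an SOS-type decomposition over $\mathbb{R}[z_1,z_2,z_3]$ after clearing the orthant constraints) or a witness to a zero, and doing this robustly across all $231$ cases is where the real computational work lies. Everything else is a bounded, mechanical search over an explicit finite list, and the correctness of the final counts rests on the soundness lemmas quoted above together with careful implementation, with the code made available at \cite{muradgithub} for verification.
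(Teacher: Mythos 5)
Your proposal is correct and follows essentially the same route as the paper: the paper's ``proof'' of Theorem~\ref{thms3r4Hopfpotential} is precisely the computational pipeline you describe, namely applying the four filters in sequence using Lemma~\ref{lemspecnohopf} to reduce N1 and N2 to the arithmetic tests on $Q$, and the sign analyses of $\mathrm{det}\,J^{[2]}$ and $\mathrm{det}(J^2+I)$ for N3 and N4, with the implementation delegated to the cited code. You also correctly identify step (D) as the only genuinely delicate part, matching the paper's own caveat in Remark~\ref{remN1N4} that this computation ``requires more effort'' in some cases.
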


We illustrate the computations behind Theorem~\ref{thms3r4Hopfpotential} via a sequence of examples.

\begin{example}[Ruling out Hopf bifurcation in bimolecular $(3,4,3)$ networks]

We present four examples of nondegenerate, bimolecular, $(3,4,3)$, mass action CRNs, for which Hopf bifurcation is ruled out at stages (A), (B), (C) and (D) respectively in Theorem~\ref{thms3r4Hopfpotential}.

\begin{enumerate}[align=left,leftmargin=*]
\item[(A)] {\bf A network where $Q$ is negative semidefinite.}
\[
\mathsf{X} \rightarrow \mathsf{X}+\mathsf{Y}, \quad \mathsf{X}+\mathsf{Y} \rightarrow \mathsf{Z}, \quad \mathsf{Y}+\mathsf{Z} \rightarrow 2\mathsf{Z}, \quad 2\mathsf{Z} \rightarrow \mathsf{X}\,.
\]
We have
\[
\Gamma=\left(\begin{array}{rrrr}0&-1&0&1\\1&-1&-1&0\\0&1&1&-2\end{array}\right), \quad \Gamma_l=\left(\begin{array}{cccc}1&1&0&0\\0&1&1&0\\0&0&1&2\end{array}\right), \quad Q = \Gamma\,\Delta_u\,\Gamma_l^\mathrm{t} = \left(\begin{array}{rrr}-1&-1&2\\1&-2&-1\\1&2&-3\end{array}\right)
\]
(using $u:=(2,1,1,1)^\mathrm{t} \in \mathrm{ker}\,\Gamma$). We can now easily check that $-(Q+Q^\mathrm{t})$ is a $P_0$ matrix, and hence that $Q$ is negative semidefinite. Hopf bifurcation is ruled out.

\item[(B)] {\bf A network where $Q$ is sign-symmetric.}
\[
0 \rightarrow \mathsf{X}, \quad \mathsf{X}+\mathsf{Y} \rightarrow 2\mathsf{Y}, \quad 2\mathsf{X} \rightarrow \mathsf{X}+\mathsf{Z}, \quad \mathsf{Y}+\mathsf{Z} \rightarrow \mathsf{X}\,.
\]
We have
\[
\Gamma=\left(\begin{array}{rrrr}1&-1&-1&1\\0&1&0&-1\\0&0&1&-1\end{array}\right), \quad \Gamma_l=\left(\begin{array}{cccc}0&1&2&0\\0&1&0&1\\0&0&0&1\end{array}\right), \quad Q = \Gamma\,\Delta_u\,\Gamma_l^\mathrm{t} = \left(\begin{array}{rrr}-3&0&1\\1&0&-1\\2&-1&-1\end{array}\right)
\]
(using $u:=(1,1,1,1)^\mathrm{t} \in \mathrm{ker}\,\Gamma$). In this case, $-(Q+Q^\mathrm{t})$ fails to be a $P_0$ matrix; however $Q$ is sign-symmetric and consequently, for any positive diagonal matrix $\Delta$, $(Q\Delta)^2$ is a $P_0$ matrix. Hopf bifurcation is ruled out.

\item[(C)] {\bf A network where $\mathrm{det}\,J^{[2]}$ is signed.}
\[
0 \rightarrow\mathsf{X}, \quad \mathsf{X}+\mathsf{Y} \rightarrow 2\mathsf{Y}, \quad \mathsf{Y} \rightarrow\mathsf{Z}, \quad \mathsf{Y}+\mathsf{Z} \rightarrow 0\,.
\]
We have
\[
\Gamma=\left(\begin{array}{rrrr}1&-1&0&0\\0&1&-1&-1\\0&0&1&-1\end{array}\right), \quad \Gamma_l=\left(\begin{array}{cccc}0&1&0&0\\0&1&1&1\\0&0&0&1\end{array}\right), \quad Q = \Gamma\,\Delta_u\,\Gamma_l^\mathrm{t} = \left(\begin{array}{rrr}-2&-2&0\\2&0&-1\\0&0&-1\end{array}\right)
\]
(using $u:=(2,2,1,1)^\mathrm{t} \in \mathrm{ker}\,\Gamma$). In this case, $-(Q+Q^\mathrm{t})$ fails to be a $P_0$ matrix and $Q$ fails to be sign-symmetric. However, we can compute
\[
J=Q\,\Delta_z = \left(\begin{array}{ccc}-2z_1&-2z_2&0\\2z_1&0&-z_3\\0&0&-z_3\end{array}\right) \quad \mbox{and hence} \quad J^{[2]} = \left(\begin{array}{ccc}-2z_1&-z_3&0\\0&-2z_1-z_3&-2z_2\\0&2z_1&-z_3\end{array}\right)\,.
\]
We calculate that $\mathrm{det}\,J^{[2]} = -2z_1(z_3^2+2z_1z_3+4z_1z_2)$ which is clearly negative on $\mathbb{R}^3_{+}$. Hopf bifurcation is ruled out.

\item[(D)] {\bf A network where $\mathrm{det}\,(J^2+I)$ is positive.}
\[
0 \rightarrow \mathsf{X}, \quad \mathsf{X}+\mathsf{Y} \rightarrow 2\mathsf{Z}, \quad \mathsf{Y}+\mathsf{Z} \rightarrow 2\mathsf{Y}, \quad 2\mathsf{Z} \rightarrow \mathsf{Z}\,.
\]
We have
\[
\Gamma=\left(\begin{array}{rrrr}1&-1&0&0\\0&-1&1&0\\0&2&-1&-1\end{array}\right), \quad \Gamma_l=\left(\begin{array}{cccc}0&1&0&0\\0&1&1&0\\0&0&1&2\end{array}\right), \quad Q = \Gamma\,\Delta_u\,\Gamma_l^\mathrm{t} = \left(\begin{array}{rrr}-1&-1&0\\-1&0&1\\2&1&-3\end{array}\right)
\]
(using $u:=(1,1,1,1)^\mathrm{t} \in \mathrm{ker}\,\Gamma$). In this case, $-(Q+Q^\mathrm{t})$ fails to be a $P_0$ matrix, and $Q$ fails to be sign-symmetric. We can compute
\[
J=Q\,\Delta_z = \left(\begin{array}{ccc}-z_1&-z_2&0\\-z_1&0&z_3\\2z_1&z_2&-3z_3\end{array}\right) \quad \mbox{and hence} \quad J^{[2]} = \left(\begin{array}{ccc}-z_1&z_3&0\\z_2&-z_1-3z_3&-z_2\\-2z_1&-z_1&-3z_3\end{array}\right)\,.
\]
But $\mathrm{det}\,J^{[2]} = 3z_2z_3^2-9z_1z_3^2+2z_1z_2z_3-3z_1^2z_3+z_1^2z_2$ can take all signs on $\mathbb{R}^3_{+}$. However, we find that 
\[
\mathrm{det}\,(J^2+I) = (3z_1z_3-z_1z_2-z_2z_3-1)^2 + (2z_1z_2z_3+z_1+3z_3)^2
\]
which is clearly positive on $\mathbb{R}^3_{+}$ (one of the squares contains only positive terms). Hopf bifurcation is ruled out.
\end{enumerate}

\end{example}

\begin{remark}[Networks where Hopf bifurcation is ruled out, but oscillation occurs]
Up to dynamical equivalence, there are $19$ bimolecular $(3,4,3)$ networks where nondegenerate Hopf bifurcation is ruled out but an equilibrium can still have a pair of purely imaginary eigenvalues, and degenerate periodic orbits can occur. $15$ of these networks are nondegenerate, while four are degenerate. These networks will be examined further in future work. 
\end{remark}

\section{Confirming Hopf bifurcation in $(3,4,3)$ networks}
\label{sec343hopf}

The analysis so far has allowed us to rule out Hopf bifurcation in the great majority of  bimolecular $(3,4,3)$ networks. We now turn our attention to the 87 dynamically non-equivalent CRNs with the potential for Hopf bifurcation identified in Theorem~\ref{thms3r4Hopfpotential}. We wish to know which of these networks admit nondegenerate Hopf bifurcation, unfolded by the rate constants; and amongst these, which networks admit a supercritical bifurcation, subcritical bifurcation, or both. 

We outline the theory only briefly, focussed on the case of interest here, and referring the reader to \cite{kuznetsov:2004} for the details. Let $U \subseteq \mathbb{R}^n$ be open and consider the parameterised system
\begin{align}
\label{eq:hopf_ode}
\dot{x} = f(x,\alpha), \hspace{0.5cm} x \in U,\,\, \alpha \in \mathbb{R}\,.
\end{align}
Assume that $f$ is sufficiently smooth, that $f(0,0)=0$, and that $D_xf(0,0)$ has a pair of nonzero imaginary eigenvalues. To simplify matters, we also assume that all other eigenvalues of $D_xf(0,0)$ have negative real parts, as this is the situation which must occur, for reasons discussed further below, in all $3$-species, $4$-reaction, bimolecular, mass action networks with potential Hopf bifurcation. Since the origin is a nondegenerate equilibrium when $\alpha=0$, we may also assume, without loss of generality (i.e., via application of the implicit function theorem), that $f(0,\alpha)=0$ for all $\alpha$ sufficiently close to $0$. 

Confirming that a nondegenerate Hopf bifurcation indeed occurs at $(0,0)$ and is unfolded by the parameter $\alpha$ requires us to show two things:
\begin{enumerate}[align=left,leftmargin=*]
\item {\bf Nondegeneracy.} The \emph{first Lyapunov coefficient}, denoted by $l_1(0)$ and to be defined in Section~\ref{secnondegen}, is nonzero. 
\item {\bf Transversality.} A pair of eigenvalues of $D_xf(0,\alpha)$ moves with nonzero speed through the imaginary axis as $\alpha$ crosses $0$. 
\end{enumerate}
If these conditions are met, then there exists $\alpha_0>0$ such that, after sending $\alpha \mapsto -\alpha$ if necessary, one of the following two situations occurs:
\begin{itemize}[align=left,leftmargin=*]
\item {\bf A supercritical Hopf bifurcation.} If $l_1(0) < 0$, then for $-\alpha_0 <\alpha \leq 0$ the equilibrium at the origin is asymptotically stable; for $0<\alpha<\alpha_0$ it is unstable, and there exists a small asymptotically stable periodic orbit close to the origin.
\item {\bf A subcritical Hopf bifurcation.} If $l_1(0) > 0$, then for $0 \leq \alpha < \alpha_0$ the equilibrium at the origin is unstable; for $-\alpha_0 < \alpha < 0$ it is asymptotically stable, while nearby is a small unstable periodic orbit.
\end{itemize}

\begin{remark}[Transversality in the case of several parameters]
\label{remtrans}
In the case that there are several parameters, let us say $\alpha_1, \ldots, \alpha_k$, in \eqref{eq:hopf_ode} we say that the bifurcation is unfolded by the parameters if it is unfolded by some $\alpha_i$. We can also often phrase the condition for transversality in a practically useful way in terms of the regularity of a map. Suppose $x \in U \subseteq \mathbb{R}^n, \,\,\alpha \in V \subseteq \mathbb{R}^k$ ($U$ and $V$ are assumed to be open) and we have a sufficiently smooth parameterised ODE system $\dot x = f(x, \alpha)$ on $U$. Define $\mathrm{H}$ to be the set of points in $U \times V$ satisfying the basic conditions for Hopf bifurcation: (i) $f(x,\alpha)=0$, and (ii) $D_xf(x,\alpha)$ has a pair of imaginary eigenvalues and no other eigenvalues on the imaginary axis. Suppose also that we can write down a (sufficiently smooth) function $g: U \times V \to \mathbb{R}$ such that any $(\tilde x, \tilde \alpha) \in \mathrm{H}$ has some neighbourhood in which $\mathrm{H}$ coincides with $f^{-1}(0) \cap g^{-1}(0)$; and such that $g$ changes sign on $f^{-1}(0)$ if and only if a pair of eigenvalues of $D_xf(x,\alpha)$ cross the imaginary axis. Then the parameters $\alpha$ unfold the bifurcation at $(\tilde x, \tilde \alpha)$ if and only if the map $(x, \alpha) \mapsto (f(x, \alpha), g(x,\alpha))$ is regular at $(\tilde x, \tilde \alpha)$. A natural choice for the function $g$ is given in Lemma~\ref{lemtransverse} below.
\end{remark}

We now turn back to the 87 non-equivalent bimolecular $(3,4,3)$ CRNs with the potential for Hopf bifurcation. Consider the ODE system $\dot x = f(x, \kappa) := \Gamma (\kappa \circ x^{\Gamma_l^\mathrm{t}})$ associated with one of these networks. The {\bf Hopf set} for the network will mean the subset of $\mathbb{R}^3_{+} \times \mathbb{R}^4_{+}$ satisfying (i) $f(x,\kappa)=0$, and (ii) $D_xf(x,\kappa)$ has a pair of nonzero imaginary eigenvalues. In Theorem~\ref{thms3r4Hopfpotential} we have already confirmed that all 87 networks have nonempty Hopf set. Further, all the networks are nondegenerate, and so, by Lemma~\ref{lemnondegen}, all equilibria associated with the Hopf set of each network are nondegenerate. Moreover, in every case, $D_xf(x,\kappa)$ has a real, negative eigenvalue on the Hopf set. That this must occur follows immediately from the facts that the divergence of a bimolecular mass action system is nonpositive at any positive equilibrium (see the proof of Theorem~6 in \cite{boros:hofbauer:2022a}), and that $D_xf(x,\kappa)$ is nonsingular. In fact, as the eigenvalues of $D_xf(x,\kappa)$ depend continuously on $(x,\kappa)$, and $D_xf(x,\kappa)$ is nonsingular at positive equilbria, it must have a real negative eigenvalue at each positive equilibrium, whether on the Hopf set or not.

In order to decide whether a nondegenerate Hopf bifurcation can occur in each of the 87 networks, we compute, using symbolic algebra as described in Section~\ref{secnondegen}, the first Lyapunov coefficient at each point on the Hopf set for each network. (Recall that for a nondegenerate Hopf bifurcation to occur we require the first Lyapunov coefficient to be nonzero.) We also check, as described in more detail in Section~\ref{sectrans} below, that the rate constants unfold the bifurcation. 

To following theorem summarises the outcome of this process. The full code and detailed output of the computations are available on GitHub \cite{balazsgithub,muradgithub}.
\begin{thm}
\label{thms3r4Hopf}
Nondegenerate Hopf bifurcations occur in $86$ of the $87$ dynamically non-equivalent, bimolecular $3$-species, $4$-reaction CRNs with the potential for Hopf bifurcation identified in Theorem~\ref{thms3r4Hopfpotential}. A nondegenerate supercritical Hopf bifurcation occurs in $57$ of these $86$ networks; a nondegenerate subcritical Hopf bifurcation occurs in $54$ of these $86$ networks; and both supercritical and subcritical bifurcations occur in $25$ of the networks. The single remaining network does not admit a nondegenerate Hopf bifurcation: the first Lyapunov coefficient is identically zero on the Hopf set. All the Hopf bifurcations, both nondegenerate and degenerate, are unfolded by the rate constants. 
\end{thm}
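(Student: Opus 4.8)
The plan is to reduce the statement to a finite family of symbolic computations, one per network, organised around the projection method for the first Lyapunov coefficient. First I would make the Hopf set completely explicit. For a nondegenerate $(3,4,3)$ network the Jacobian at an equilibrium is $J = QD_z$ with $z\in\mathbb{R}^3_{+}$, so its characteristic polynomial is $\lambda^3 + c_1(z)\lambda^2 + c_2(z)\lambda + c_3(z)$, where $c_i$ is homogeneous of degree $i$ in $z$ with coefficients read off from $Q$. Since, as already recorded, the third eigenvalue is always real and negative on $\mathbb{R}^3_{+}\times\mathbb{R}^4_{+}$, the Hopf set in $z$-space is cut out by the single equation $\varphi(z):=c_1(z)c_2(z)-c_3(z)=0$ together with $c_1(z)>0$ and $c_2(z)>0$; by the positive homogeneity of $J$ in $z$ (Remark~\ref{remJhom}) this locus is a cone, hence, up to scaling, a plane algebraic curve, and the sign of the first Lyapunov coefficient is a function on that curve.

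Second, I would apply the projection formula for $l_1$ recalled in Section~\ref{secnondegen} (equivalently Kuznetsov, Ch.~3). Because the network is bimolecular the vector field $f(x,\kappa)=\Gamma(\kappa\circ x^{\Gamma_l^{\mathrm{t}}})$ is quadratic, so the cubic terms drop out and $l_1$ is assembled only from the quadratic part $B$ of $f$ at the equilibrium, the right and left critical eigenvectors $q,p$ of $J$ (normalised by $\langle p,q\rangle=1$), and the resolvents $J^{-1}$ and $(2i\omega I-J)^{-1}$, all of which are algebraic in $z$ along the Hopf curve. The output is an explicit algebraic function $l_1$ on that curve.

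Third, for each of the $87$ candidate networks surviving Theorem~\ref{thms3r4Hopfpotential} I would run this computation and then: (i) decide whether $l_1$ vanishes identically on the Hopf set --- it does not for $86$ of them, and it does for the one exceptional network; (ii) for the $86$ nondegenerate cases, evaluate the sign of $l_1$ at explicit rational points of the Hopf curve to witness whether $l_1<0$ occurs somewhere ($57$ networks, giving a supercritical branch), whether $l_1>0$ occurs somewhere ($54$ networks, subcritical branch), or both (the $25$ networks), each existence claim certified by a single exhibited point; and (iii) check transversality. For (iii) I would use $g:=\varphi$ in the formulation of Remark~\ref{remtrans} --- a sign change of $\varphi$ along the equilibrium set is exactly a pair of eigenvalues crossing the imaginary axis --- and verify that the map $(x,\kappa)\mapsto(f(x,\kappa),\varphi(x,\kappa))$ is a submersion there, equivalently that some rate constant drives the critical pair through the axis with nonzero speed; the witness points in (ii) are chosen so that $l_1\neq 0$ and this regularity hold simultaneously, so a genuine nondegenerate Hopf bifurcation of the stated criticality unfolds at each of them. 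For the exceptional network, $l_1\equiv 0$ together with the non-emptiness of the Hopf set and the openness of the conditions defining it yields a robust degenerate Hopf bifurcation, still unfolded transversally by the rate constants; this is the content of the last two sentences of the theorem.

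The step I expect to be the main obstacle is the exact symbolic evaluation of $l_1$ on the Hopf curve: the expressions can be sizeable, and for the exceptional network one must establish the identity $l_1\equiv 0$ --- an equality in the function field of the curve --- rather than merely observe that $l_1$ is numerically small, which forces an honest exact computation. A secondary subtlety is transversality: since $f$ is positively homogeneous in $\kappa$, overall rescaling of the rate constants merely dilates the spectrum along rays and never crosses the imaginary axis, so the usable transversal direction must come from a genuine change of the ratios $\kappa_i/\kappa_j$; confirming this amounts to a rank computation for the Jacobian of $(f,\varphi)$ at each witness point, which I expect to succeed on a dense subset of the Hopf set.
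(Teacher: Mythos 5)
Your proposal is correct and follows essentially the same route as the paper: a per-network symbolic computation of the first Lyapunov coefficient via the projection formula of Section~\ref{secnondegen} (with the cubic term absent by bimolecularity), sign evaluation of $l_1$ along the Hopf curve, and a transversality check via regularity of the augmented map as in Remark~\ref{remtrans}. Your defining function $\varphi=c_1c_2-c_3$ is just $-\det J^{[2]}$, and your submersion condition is exactly what Lemma~\ref{lemtransverse} reduces to the $\kappa$-independent statement $Dg(\tilde x)\neq 0$ with $g(z)=\det((QD_z)^{[2]})$, which the paper then verifies on all of $\mathbb{R}^3_{+}$ rather than only at witness points.
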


The full list of networks admitting nondegenerate Hopf bifurcation in Theorem~\ref{thms3r4Hopf} appears in Appendix~\ref{appnetworks}. It is helpful to classify the $86$ non-equivalent CRNs capable of nondegenerate Hopf bifurcation into groups. As expected, all of these networks have $4$ distinct reactant complexes (see Remark~\ref{remND}), and we can group the networks according to the molecularities of these reactant complexes. A network whose reactant complexes have molecularities $a$, $b$, $c$ and $d$ will be said to have {\bf reactant molecularity} $(a,b,c,d)$, where we assume that $a \leq b \leq c \leq d$. 

There are four reactant molecularities which occur amongst the 86 networks admitting nondegenerate Hopf bifurcation: $(0,1,2,2)$, i.e., one constant, one linear and two quadratic reaction rates; $(1,1,2,2)$, i.e., two linear and two quadratic reaction rates; $(0,2,2,2)$, i.e., one constant and three quadratic reaction rates; and $(1,2,2,2)$, i.e., one linear and three quadratic reaction rates. In Table~\ref{table86summary}, we present a classification of these 86 networks according to reactant molecularity and the signs that the first Lyapunov coefficient, denoted by $L_1$, can take on the Hopf set. 

We also observe that the exceptional network admitting only degenerate Hopf bifurcation has reactant molecularity $(0,2,2,2)$. There can thus be no Hopf bifurcation, degenerate or nondegenerate, in a bimolecular $(3,4,3)$ mass action system with only a single quadratic term in its ODEs. This is consistent with the results of Wilhelm and Heinrich \cite{WilhelmHeinrich1995,WilhelmHeinrich1996}, who found that the smallest bimolecular, $3$-species, mass action system admitting Hopf bifurcation, and having only one quadratic term in its ODEs, must have five reactions.

\bgroup
\def\arraystretch{1.5}
\begin{table}[h]
\begin{center}
\begin{tabular}{c"c|c|c|c|}
\multicolumn{1}{c}{}&\multicolumn{4}{c}{Reactant molecularity}\\
        & $(0,1,2,2)$ & $(1,1,2,2)$ & $(0,2,2,2)$ & $(1,2,2,2)$ \\
\thickhline
$L_1<0$ & $5$ & $8$ & $5$ & $14$ \\\hline
$L_1 \gtreqless 0$ & $1$ & $6$ & $0$ & $18$ \\\hline
$L_1>0$ & $1$ & $7$ & $0$ & $15$ \\\hline
$L_1 \geq 0$ & $6$ & $0$ & $0$ & $0$ \\
\hline
\end{tabular}
\end{center}
\caption{\label{table86summary}Total numbers of non-equivalent bimolecular $(3,4,3)$ networks admitting nondegenerate Hopf bifurcation, classified according to reactant molecularity and the signs that the first Lyapunov coefficient can take at bifurcation. We see, for example, that there are $6$ networks with reactant molecularity $(1,1,2,2)$ (i.e., two linear and two quadratic reaction rates) which additionally have the property that $L_1$ can take all signs (i.e., both supercritical and subcritical bifurcations can occur). The networks themselves falling into each category are listed in Appendix~\ref{appnetworks}.}
\end{table}
\egroup

\subsection{Nondegeneracy of Hopf bifurcations}
\label{secnondegen}
We outline the computation of the first Lyapunov coefficient $l_1(0)$ for \eqref{eq:hopf_ode} following \cite{kuznetsov:2004}. Denote by $A(\alpha) \in \mathbb{R}^{n\times n}$ the Jacobian matrix $D_xf(0,\alpha)$. We assume that eigenvalues of $A(0)$ include the purely imaginary pair $\pm i \omega$ where $\omega > 0$. We can write the Taylor expansion of $f(x,0)$ as
\begin{align*}
f(x,0) = A(0) x + \frac12 B(x,x) + \frac16 C(x,x,x) + O(\|x\|^4),
\end{align*}
where $B\colon \mathbb{R}^n \times \mathbb{R}^n \to \mathbb{R}^n$ and $C\colon \mathbb{R}^n \times \mathbb{R}^n \times \mathbb{R}^n \to \mathbb{R}^n$ are the multilinear functions
\begin{align*}
B_j(x,y) = \sum_{k,l=1}^n \frac{\partial^2 f_j(\xi,0)}{\partial \xi_k \partial \xi_l}\bigg|_{\xi=0} x_k y_l, \quad
C_j(x,y,z) = \sum_{k,l,m=1}^n \frac{\partial^3 f_j(\xi,0)}{\partial \xi_k \partial \xi_l \partial \xi_m}\bigg|_{\xi=0}x_k y_l z_m
\end{align*}
for $j=1,\ldots,n$. Further, let $p,q\in\mathbb{C}^n$ be left and right eigenvectors of $A(0)$ satisfying
\begin{align*}
A(0) q = \omega i q, \quad
A(0)^\mathrm{t} p = -\omega i p, \quad
\langle p, q \rangle = 1.
\end{align*}
Here, $\langle \cdot, \cdot \rangle \colon \mathbb{C}^n \times \mathbb{C}^n \to \mathbb{C}$ is the standard scalar product in $\mathbb{C}^n$. With all this preparation, the first Lyapunov coefficient, $l_1(0)$, is given by
\begin{align}
\label{eql1}
l_1(0)=\frac{1}{2\omega}\mathrm{Re} \left\langle p,C(q,q,\overline{q})+2B\left(q,(-A(0))^{-1}B(q,\overline{q})\right) + B\left(\overline{q},(2\omega i I\,-A(0))^{-1}B(q,q)\right)\right\rangle,
\end{align}
where $I\,$ is the $n$ by $n$ identity matrix.

A concrete example of the calculation of $l_1(0)$ is presented in Section~\ref{secworked}.

\subsection{Transversality of Hopf bifurcations}
\label{sectrans}

All of the nondegenerate Hopf bifurcations described above in bimolecular $(3,4,3)$ networks, and the single degenerate Hopf bifurcation too, are easily checked to be unfolded by the rate constants. Confirming this is facilitated by the following lemma. 

\begin{lemma}
\label{lemtransverse}
Consider a nondegenerate, dynamically nontrivial, $(n,n+1,n)$ CRN giving rise to the differential equation $\dot x = f(x, \kappa) := \Gamma (\kappa \circ x^{\Gamma_l^\mathrm{t}})$. Suppose that $(\tilde{x}, \tilde{\kappa})$ satisfies:
\begin{itemize}
\item $\tilde{x}$ is a nondegenerate equilibrium of $\dot x = f(x, \tilde{\kappa})$, i.e., $f(\tilde{x}, \tilde{\kappa}) = 0$ and $\mathrm{det}\,D_xf(\tilde{x}, \tilde{\kappa}) \neq 0$.
\item $D_xf(\tilde{x}, \tilde{\kappa})$ has a pair of nonzero imaginary eigenvalues, and all other eigenvalues of $D_xf(\tilde{x}, \tilde{\kappa})$ have negative real parts.
\end{itemize}
Fix some positive $u \in \mathrm{ker}\,\Gamma$ and define $g(x) := \mathrm{det}\,(\Gamma \Delta_{u} \Gamma_l^{\mathrm{t}}\Delta_{1/x})^{[2]}$. Then the bifurcation at $(\tilde{x}, \tilde{\kappa})$ is unfolded by the rate constants $\kappa$ if and only if $g$ is regular at $\tilde{x}$, i.e., $Dg(\tilde{x}) \neq 0$.
\end{lemma}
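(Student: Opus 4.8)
The plan is to reduce the transversality condition to a statement about the rank of a composite map, and then use the natural recoordinatisation of Section~\ref{secrecoord} to strip away the dependence on $\kappa$ altogether, leaving only a condition on the map $x \mapsto g(x)$. The key observation is that for a nondegenerate $(n,n+1,n)$ network, the set of Jacobian matrices at equilibria is exactly $\{QD_{1/x} : x \in \mathbb{R}^n_+\}$ with $Q = \Gamma D_u \Gamma_l^{\mathrm{t}}$ constant (see \eqref{eqJacMA2}), and so the ``singular-imaginary-pair'' locus of the Jacobian is cut out by the single equation $\det g(x) = \det (QD_{1/x})^{[2]} = 0$, because $D_xf(x,\kappa)$ has a pair of purely imaginary eigenvalues exactly when its second additive compound is singular, and near $(\tilde x, \tilde\kappa)$ the other eigenvalue is real and negative so no extra components of $\mathrm{H}$ intrude. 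This puts us in the framework of Remark~\ref{remtrans}: with $g$ replaced by the scalar $\det g(\cdot)$, the parameters unfold the bifurcation iff the map $(x,\kappa) \mapsto (f(x,\kappa), \det g(x))$ is regular at $(\tilde x, \tilde\kappa)$.

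First I would make the regularity condition explicit. Write $F(x,\kappa) := (f(x,\kappa), \det g(x)) \in \mathbb{R}^{n+1}$. Its derivative at $(\tilde x,\tilde\kappa)$ is the $(n+1)\times(2n+1)$ block matrix
\[
DF(\tilde x,\tilde\kappa) = \begin{pmatrix} D_xf(\tilde x,\tilde\kappa) & D_\kappa f(\tilde x,\tilde\kappa)\\ D_x(\det g)(\tilde x) & 0 \end{pmatrix}.
\]
Regularity means this has rank $n+1$. Since $\tilde x$ is a nondegenerate equilibrium, $D_xf(\tilde x,\tilde\kappa)$ is an invertible $n\times n$ block, so the first $n$ rows are already independent and contribute rank $n$; the map is regular iff the last row $(D_x(\det g)(\tilde x)\,|\,0)$ is not in the span of the first $n$ rows, equivalently iff $D_x(\det g)(\tilde x) \neq 0$ (the first $n$ rows already span a complement to it within the first $n$ coordinates precisely because $D_xf$ is invertible — any vector of the form $(w\,|\,0)$ in their span forces $w=0$). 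Hence regularity of $F$ is equivalent to $D_x(\det g)(\tilde x)\neq 0$.

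Next I would relate $D_x(\det g)(\tilde x)\neq 0$ to $Dg(\tilde x)\neq 0$, i.e.\ show that at a point of the Hopf set the gradient of the scalar $\det g$ is nonzero exactly when the full derivative of the matrix-valued map $g$ is nonzero. One direction is trivial: if $Dg(\tilde x)=0$ then certainly $D(\det g)(\tilde x)=0$. For the converse, suppose $Dg(\tilde x)\neq 0$ but $D(\det g)(\tilde x)=0$; I would argue this cannot happen at a Hopf point because there $g(\tilde x)=(QD_{1/\tilde x})^{[2]}$ is singular with a \emph{simple} zero eigenvalue — this uses that $D_xf(\tilde x,\tilde\kappa)$ has a single conjugate pair of imaginary eigenvalues and all other eigenvalues off the axis, so the kernel of $g(\tilde x)$ is one-dimensional and the nonzero eigenvalues of $g(\tilde x)$ stay bounded away from $0$. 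By the first-order perturbation formula for a simple eigenvalue, along any direction $\delta x$ we have $D(\det g)(\tilde x)[\delta x] = (\textstyle\prod_{\text{nonzero eigs}}\lambda)\cdot \langle \ell, Dg(\tilde x)[\delta x]\, r\rangle$ where $r,\ell$ are right/left null vectors of $g(\tilde x)$; the prefactor is nonzero, so $D(\det g)(\tilde x)$ vanishes identically iff $\langle \ell, Dg(\tilde x)[\cdot]\, r\rangle\equiv 0$. Finally one checks that, because $g(x) = (QD_{1/x})^{[2]}$ has a very rigid structure (each entry is a linear combination of the $1/x_i$ with constant coefficients, the $1/x_i$ being coordinates on an open set), the bilinear form $\delta x\mapsto\langle\ell, Dg(\tilde x)[\delta x]\,r\rangle$ cannot vanish identically unless $Dg(\tilde x)=0$ — this is the place where the monotone dependence of $g$ on $z=1/x$ and simplicity of the eigenvalue are both used. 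That gives the claimed equivalence, completing the proof.

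The main obstacle I anticipate is the last step: ruling out the ``accidental'' degeneracy in which $g$ moves ($Dg(\tilde x)\neq 0$) but $\det g$ does not ($D(\det g)(\tilde x)=0$). Establishing this cleanly requires invoking simplicity of the zero eigenvalue of $g(\tilde x)$ (which follows from the hypothesis that the Jacobian has exactly one imaginary conjugate pair and no other spectrum on the axis) together with the affine structure of $g$ in the variables $1/x_i$; I would expect the authors to phrase this via the standard derivative-of-determinant identity $D(\det g) = \det g \cdot \mathrm{tr}(g^{-1}Dg)$ on the regular part and a limiting/adjugate argument $D(\det g)(\tilde x)[\delta x] = \mathrm{tr}(\mathrm{adj}(g(\tilde x))\,Dg(\tilde x)[\delta x])$ at the singular point, with $\mathrm{adj}(g(\tilde x)) = (\prod_{\lambda\neq 0}\lambda)\, r\ell^{\mathrm{t}}$ of rank one. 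The rest is bookkeeping.
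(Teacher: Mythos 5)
Your overall strategy---recasting transversality as regularity of the bordered map $(x,\kappa)\mapsto(f(x,\kappa),\det g(x))$ via Remark~\ref{remtrans} and then computing the rank of the $(n+1)\times(2n+1)$ Jacobian---is the same as the paper's, but two steps do not hold up. The first is repairable: you claim that no nonzero vector of the form $(w\,|\,0)$ lies in the row span of $(D_xf\,|\,D_\kappa f)$ ``because $D_xf$ is invertible''. That is not the right hypothesis: if $c^{\mathrm{t}}(D_xf\,|\,D_\kappa f)=(w\,|\,0)$, invertibility of $D_xf$ only gives $c^{\mathrm{t}}=w\,(D_xf)^{-1}$, which does not force $w=0$. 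What is actually needed, and what the paper uses, is that $D_\kappa f=\Gamma D_{x^{\Gamma_l^{\mathrm{t}}}}$ is $\Gamma$ times a positive diagonal matrix and hence has full row rank $n$, so that $c^{\mathrm{t}}D_\kappa f=0$ forces $c=0$ and hence $w=0$.

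The second problem is a genuine gap. You read $g$ as the matrix-valued map and therefore must bridge from $D(\det g)(\tilde x)\neq 0$ to $Dg(\tilde x)\neq 0$. The adjugate/simple-eigenvalue reduction $D(\det g)(\tilde x)[\delta x]=c\,\ell^{\mathrm{t}}\,Dg(\tilde x)[\delta x]\,r$ with $c\neq 0$ is correct, but the decisive assertion---that $\ell^{\mathrm{t}}\,Dg(\tilde x)[\cdot]\,r$ cannot vanish identically unless $Dg(\tilde x)=0$---is offered with no argument (``one checks'') and does not follow from the structure you cite: $\ell^{\mathrm{t}}Mr=0$ is a single linear condition on $M$, and there is no reason the $n$ constant matrices $\partial g/\partial x_i(\tilde x)$ cannot all satisfy it while being nonzero. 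Worse, if your claim were true, transversality would be automatic for essentially every network (each entry of $(QD_z)^{[2]}$ is linear in $z$, so the matrix derivative is essentially never zero), whereas the paper must verify regularity computationally, network by network, for all $87$ cases. The resolution is that the lemma's $g$ is intended as the \emph{scalar} $\det\bigl((\Gamma D_u\Gamma_l^{\mathrm{t}}D_{1/x})^{[2]}\bigr)$---this is how it is used both in the paper's proof and in the worked example of Section~\ref{secworked}---and ``regular'' means that this scalar has nonvanishing gradient. With that reading your extra layer disappears entirely, and the lemma reduces to the (suitably corrected) rank computation of the bordered Jacobian.
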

\begin{proof}
Let $(\tilde{x}, \tilde{\kappa})$ satisfy the conditions of the lemma. Observe that the Hopf set is defined, in some neighbourhood of $(\tilde{x}, \tilde{\kappa})$, by the conditions $f(x,\kappa)=0$ and $G(x, \kappa):=\mathrm{det}\,((D_xf(x, \kappa))^{[2]}) = 0$, where $D_xf(x, \kappa)=\mu(\kappa) \Gamma \Delta_{u} \Gamma_l^{\mathrm{t}}\Delta_{1/x}$. Here $\mu(\kappa)$ is the positive scalar function of the rate constants obtained by solving (\ref{eqxmukappa}). Moreover $G$ changes sign on $f^{-1}(0)$ if and only if a pair of eigenvalues cross the imaginary axis. The same observations clearly hold for $g(x) := G(x, \kappa)/\mu(\kappa)$. Thus transversality of the Hopf bifurcation at $(\tilde{x}, \tilde{\kappa})$ is equivalent to regularity of $h(x,\kappa):=(f(x,\kappa),g(x))$ at $(\tilde{x}, \tilde{\kappa})$ (see Remark~\ref{remtrans}).

The Jacobian matrix of $h$ has dimensions $(n+1) \times (2n+1)$ and takes the form
\[
Dh:=\left(\begin{array}{cc}D_xf&D_\kappa f\\Dg&0\\\end{array}\right)\,, \quad \mbox{so that} \quad Dh(\tilde{x},\tilde{\kappa}) = \left(\begin{array}{cc}\mu(\tilde{\kappa}) \Gamma \Delta_{u} \Gamma_l^\mathrm{t} \Delta_{1/\tilde{x}}&\mu(\tilde{\kappa})\Gamma \Delta_{u/\tilde{\kappa}}\\Dg(\tilde{x})&0\end{array}\right)\,.
\]
Since the network is nondegenerate by assumption, $D_xf(x, \kappa) = \mu(\tilde{\kappa}) \Gamma \Delta_{u} \Gamma_l^\mathrm{t} \Delta_{1/\tilde{x}}$ has rank $n$. On the other hand, the $n \times (n+1)$ matrix $\mu(\tilde{\kappa})\Gamma \Delta_{u/\tilde{\kappa}}$ clearly has rank $n$, being the product of $\Gamma$, which has rank $n$, and a positive diagonal matrix. We can conclude that $Dh(\tilde{x},\tilde{\kappa})$ has rank $n+1$ if and only if $Dg(\tilde{x})\neq 0$. In one direction this is trivial since if $Dg(\tilde{x}) = 0$, then $\mathrm{rank}\,Dh(\tilde{x},\tilde{\kappa}) < n+1$. In the other direction, if $Dg(\tilde{x}) \neq 0$, the bottom row of $Dh(\tilde{x},\tilde{\kappa})$ clearly cannot be a linear combination of the remaining rows, as $\mu(\tilde{\kappa})\Gamma \Delta_{u/\tilde{\kappa}}$ has rank $n$; hence $\mathrm{rank}\,Dh(\tilde{x},\tilde{\kappa}) = n+1$.
\end{proof}

\begin{remark}
Note that the transversality condition in Lemma~\ref{lemtransverse} does not depend directly on the rate constants $\kappa$ at all. Moreover, we do not require the bifurcation to be nondegenerate, beyond the requirement that the equilibrium itself is nondegenerate. 
\end{remark}

In the light of Lemma~\ref{lemtransverse}, for any nondegenerate $(n,n+1,n)$ CRN, we define $g(z) := \mathrm{det}\,(Q\Delta_z)^{[2]}$, where, as usual, we have chosen some positive $u \in \mathrm{ker}\,\Gamma$, and set $Q=\Gamma \Delta_u\Gamma_l^\mathrm{t}$. We can now confirm, for each of the $87$ bimolecular $(3,4,3)$ networks admitting Hopf bifurcation, including the exceptional network admitting only a degenerate bifurcation, that $g(z)$ is regular on $\mathbb{R}^3_{+}$. The code used to confirm this is on GitHub \cite{muradgithub}. Thus in all cases the bifurcations are unfolded by the rate constants. 

An example illustrating the nondegeneracy and transversality calculations is presented next.

\section{A worked example}
\label{secworked}

We consider the bimolecular $(3,4,3)$ network
\[
0 \overset{\kappa_1}{\longrightarrow} \mathsf{X}, \quad \mathsf{X} \overset{\kappa_2}{\longrightarrow} \mathsf{Y}, \quad \mathsf{Y}+\mathsf{Z} \overset{\kappa_3}{\longrightarrow} 2\mathsf{Z}, \quad \mathsf{X}+\mathsf{Z} \overset{\kappa_4}{\longrightarrow} 0\,,
\]
with mass action rate constants $\kappa_1, \kappa_2, \kappa_3$ and $\kappa_4$ as shown. This is network 1a listed in Appendix~\ref{appnetworks}. 

\subsection{The original and the recoordinatised ODE} 
The network gives rise to the system of ODEs
\begin{align}\label{eq:ode_xyz}
\begin{split}
\dot{x} &= \kappa_1-\kappa_2x-\kappa_4xz, \\
\dot{y} &= \kappa_2 x-\kappa_3yz,\\
\dot{z} &= \kappa_3yz - \kappa_4xz\,,
\end{split}
\end{align}
where $x,y$ and $z$ are the concentrations of $\mathsf{X}$, $\mathsf{Y}$ and $\mathsf{Z}$ respectively. The recoordinatisation described in Section~\ref{secrecoord} transforms \eqref{eq:ode_xyz} into
\begin{align}\label{eq:ode_uvw}
\begin{split}
\dot{u} &= \alpha(2 - u - u w), \\
\dot{v} &= \beta(u - v w),\\
\dot{w} &= \gamma(v w- u w),
\end{split}
\end{align}
where $\alpha = \kappa_2$, $\beta = \frac{\kappa_2\kappa_3}{\kappa_4}$, $\gamma = \frac{\kappa_1\kappa_4}{2\kappa_2}$, and $u, v$ and $w$ are the rescaled concentrations of $\mathsf{X}$, $\mathsf{Y}$ and $\mathsf{Z}$ respectively. The Jacobian matrix of \eqref{eq:ode_uvw} evaluated at the unique positive equilibrium $(1,1,1)$ equals
\[
A = \left(\begin{array}{rrr}
-2\alpha & 0 & -\alpha \\
\beta & -\beta & -\beta \\
-\gamma & \gamma & 0\end{array}\right).
\]
Since $\det A = -2\alpha\beta\gamma<0$, the unique positive equilibrium is nondegenerate, and the matrix $A$ always has a negative real eigenvalue. 

\subsection{The Hopf set} 
The characteristic polynomial of $A$ is
\begin{align*}
\lambda^3 + (2\alpha +\beta)\lambda^2 + (2\alpha \beta - \alpha \gamma + \beta \gamma)\lambda + 2\alpha\beta\gamma.
\end{align*}
We can calculate that $A$ has a pair of nonzero imaginary eigenvalues if and only if
\[
(\alpha, \beta, \gamma) \in \mathrm{H}:=\left\{(\alpha,\beta,\gamma) \in \mathbb{R}^3_{+}\,\colon\,\beta<2\alpha \text{ and } \gamma=\frac{2\alpha\beta(2\alpha+\beta)}{(\alpha+\beta) (2\alpha-\beta)}\right\}\,.
\]
Note that the Hopf set of \eqref{eq:ode_uvw} is, strictly speaking, $\{(1,1,1)^\mathrm{t}\} \times \mathrm{H}$; but it should cause no confusion to refer to $\mathrm{H}$ as the Hopf set.

To simplify some calculations, we can, without loss of generality, set $\alpha = 1$; this is equivalent to rescaling time $t \mapsto \kappa_2 t$, and redefining $\beta = \frac{\kappa_3}{\kappa_4}$ and $\gamma = \frac{\kappa_1\kappa_4}{2\kappa_2^2}$. The section of $\mathrm{H}$ satisfying $\alpha=1$ is
\begin{align}\label{eq:RH_beta_gamma}
\widehat{\mathrm{H}}:=\left\{(\beta,\gamma) \in \mathbb{R}^2_{+}\,\colon\,\beta<2 \text{ and } \gamma = \frac{2\beta(\beta+2)}{(\beta+1) (2-\beta)}\right\}.     
\end{align}

\subsection{Nondegeneracy of the bifurcation} 
We carry out the computations detailed in Section~\ref{secnondegen}. The goal is to calculate the first Lyapunov coefficient, denoted by $L_1$, at an arbitrary point on the Hopf set. After shifting the equilibrium $(1,1,1)$ of \eqref{eq:ode_uvw} to the origin, the differential equation becomes
\begin{align} \label{eq:ode_uvw_origin}
\left(\begin{array}{c}
\dot u \\ \dot{v} \\ \dot{w}
\end{array}\right) = 
A \left(\begin{array}{c}
u \\ v \\ w
\end{array}\right) + 
\left(\begin{array}{c}
-\alpha uw \\ -\beta vw \\ \gamma vw - \gamma uw
\end{array}\right)
\end{align}
where we have kept the notation $u$, $v$, $w$ for the shifted variables. Now let $B \colon \mathbb{R}^3 \times \mathbb{R}^3 \to \mathbb{R}^3$ be the symmetric bilinear function for which the quadratic terms in the r.h.s.\ of the differential equation \eqref{eq:ode_uvw_origin} equals 
\begin{align*}
\frac12 B\left(\begin{bmatrix} u \\ v \\ w \end{bmatrix},\begin{bmatrix} u \\ v \\ w \end{bmatrix}\right),
\end{align*}
i.e.,
\begin{align*}
B \colon \left(\begin{bmatrix} x_1 \\ x_2 \\ x_3 \end{bmatrix},\begin{bmatrix} y_1 \\ y_2 \\ y_3 \end{bmatrix}\right) \mapsto \left(\begin{array}{c}
-\alpha(x_1 y_3 + x_3 y_1)\\
-\beta(x_2 y_3 + x_3 y_2) \\
\gamma(x_2 y_3 + x_3 y_2) - \gamma(x_1 y_3 + x_3 y_1)
\end{array}\right).
\end{align*}
Notice that due to the bimolecularity of the mass action system \eqref{eq:ode_xyz}, there are no cubic or higher order terms on the r.h.s.\ of \eqref{eq:ode_uvw_origin}, and hence the multilinear function $C$ which figures in \eqref{eql1} is zero. When the eigenvalues of $A$ are of the form $\{\omega i, -\omega i, \varrho\}$ for some $\omega>0$ and $\varrho<0$, we have $\omega = \sqrt{\frac{\det A}{\mathrm{tr}\, A}}$. Let $p, q \in \mathbb{C}^3$ be left and right eigenvectors of $A$ satisfying
\begin{align*}
A q = \omega i q, \quad A^\mathrm{t} p = -\omega i p, \quad \langle p, q \rangle = 1\,.
\end{align*}
Using the general formula in \eqref{eql1}, the first Lyapunov coefficient is now, up to positive scaling,
\begin{align*}
L_1 = \mathrm{Re}\, \left\langle p,2B\left(q,(-A)^{-1}B(q,\overline{q})\right) + B\left(\overline{q},(2\omega i I-A)^{-1}B(q,q)\right)\right\rangle.
\end{align*}
(The dependence on $\alpha, \beta$ and $\gamma$ has been suppressed for notational convenience.) 

In order to determine the sign of $L_1$ on the Hopf set, we only need to determine its sign on $\widehat{\mathrm{H}}$. Computing $\omega$ and eigenvectors $p$ and $q$ in terms of $\beta$ and $\gamma$, we find that on $\widehat{\mathrm{H}}$, $L_1$ equals, up to scaling,
\begin{align*}
-\frac{9216 \beta^{11}(\beta+2)^4(3-\beta)^2 (\beta^6+15\beta^5-106\beta^4-8\beta^3+320\beta^2+176\beta+96)}{(\beta-2)^9(\beta+1)^8}.
\end{align*}
We find that $L_1|_{\widehat{\mathrm{H}}}$ is negative for all $0<\beta<2$, and hence the same holds for $L_1$ on all of $\mathrm{H}$. It follows that the equilibrium $(1,1,1)$ of \eqref{eq:ode_uvw} is asymptotically stable for all the parameters on $\mathrm{H}$. Furthermore, for $\gamma$ slightly larger than $\frac{2\alpha\beta(2\alpha+\beta)}{(\alpha+\beta) (2\alpha-\beta)}$, a stable limit cycle, born via a supercritical Hopf bifurcation, exists on the unstable manifold of $(1,1,1)$.

\begin{remark}[A minimal parameterisation of nondegenerate $(n,n+1,n)$ networks]
\label{remHcurve}
For all nondegenerate $(n,n+1,n)$ networks, the coordinate change carried out above (see Section~\ref{secrecoord}), followed by the rescaling of time which led to the fixing of one parameter, are both possible. We may thus consider nondegenerate, $(3,4,3)$, mass action networks to be parameterised by exactly two parameters, and for any such network, we can reasonably consider the Hopf set projected onto parameter space as a ``Hopf curve'', such as $\widehat{\mathrm{H}}$ in \eqref{eq:RH_beta_gamma} above, in a two dimensional parameter space consisting of two of the three parameters $\alpha, \beta$ and $\gamma$.
\end{remark}

\subsection{Unfolding by the rate constants} 
Next, we use this example to illustrate the application of Lemma~\ref{lemtransverse} to prove that the Hopf bifurcation is unfolded by the rate constants. In this case the stoichiometric matrix $\Gamma$, and left stoichiometric matrix $\Gamma_l$ are
\[
\Gamma = \left(\begin{array}{rrrr}1&-1&0&-1\\0&1&-1&0\\0&0&1&-1\end{array}\right) \quad \mbox{and} \quad \Gamma_l = \left(\begin{array}{cccc}0&1&0&1\\0&0&1&0\\0&0&1&1\end{array}\right)\,.
\]
We can choose $u = (2,1,1,1)^\mathrm{t} \in \mathrm{ker}\,\Gamma$, and set $z = 1/x$ to obtain
\[
\Gamma \Delta_u \Gamma_l^\mathrm{t}\Delta_z = \left(\begin{array}{ccc}-2z_1&0&-z_3\\z_1&-z_2&-z_3\\-z_1&z_2&0\end{array}\right), \quad (\Gamma \Delta_u \Gamma_l^\mathrm{t}\Delta_z)^{[2]} = \left(\begin{array}{ccc}-2z_1-z_2&-z_3&z_3\\z_2&-2z_1&0\\z_1&z_1&-z_2\end{array}\right)
\]
and hence $g(z) = \mathrm{det}\,((\Gamma \Delta_u \Gamma_l^\mathrm{t}\Delta_z)^{[2]}) = -4z_1^2z_2+z_3z_1z_2-z_3z_2^2-2z_1z_2^2+2z_3z_1^2$. What remains to be shown is that $g$, regarded as a map from $\mathbb{R}^3_{+}$ to $\mathbb{R}$, is everywhere regular. This is easily shown, for example by proving that $\left(\frac{\partial g}{\partial z_1}\right)^2 + \left(\frac{\partial g}{\partial z_2}\right)^2 + \left(\frac{\partial g}{\partial z_3}\right)^2$ is (strictly) positive on $\mathbb{R}^3_{+}$.

We have thus shown that this network undergoes a supercritical bifurcation at each point on its Hopf set, and that this bifurcation is unfolded by the rate constants. 

We obtain the results of Theorem~\ref{thms3r4Hopf} precisely by carrying out such computations on each of the 87 non-equivalent networks given by Theorem~\ref{thms3r4Hopfpotential}. The networks themselves, and the outcomes of the calculations of the first Lyapunov coefficient, are in Appendix~\ref{appnetworks}. The code used to carry out the computations is on GitHub \cite{balazsgithub, muradgithub}.

\section{Bifurcations of higher codimension and the creation of multiple periodic orbits}
\label{secBautin}

In this section, we briefly discuss bifurcations of higher codimension in the 86 bimolecular $(3,4,3)$ networks which admit nondegenerate Hopf bifurcation with mass action kinetics. We first note that none of the 86 networks can admit a Bogdanov--Takens bifurcation, as equilibria of these networks are all nondegenerate. Indeed, in light of the dichotomy in Lemma~\ref{lemnondegen}, $(n,n+1,n)$ CRNs cannot admit even fold bifurcations. 

So when considering bifurcations of codimension $2$, the possibility of interest is a generalised Hopf bifurcation, also known as a Bautin bifurcation \cite[Section 8.3]{kuznetsov:2004}. In particular we would like to know: can any of the networks have multiple periodic orbits for some values of the rate constants? And can a stable equilibrium coexist with a stable periodic orbit?

The main bifurcation condition for a Bautin bifurcation is that the first Lyapunov coefficient, which we denoted by $L_1$, must vanish at a point on the Hopf set. But this condition alone is not sufficient to guarantee that a nondegenerate Bautin bifurcation occurs and is unfolded by the rate constants: we must additionally confirm nondegeneracy and transversality conditions. 

We omit most of the detail, but observe that the nondegeneracy condition corresponds to the nonvanishing of the {\em second} Lyapunov coefficient, denoted $L_2$, whose derivation for systems of dimension greater than $2$ is detailed in \cite[Sections 8.7.1 and 8.7.3]{kuznetsov:2004}. The transversality condition can, as usual, be phrased in terms of regularity of a certain map. 

We were able to confirm the basic Bautin bifurcation condition, namely the vanishing of $L_1$ on the Hopf set, in 31 of the 86 networks admitting Hopf bifurcation. Let us say that these $31$ networks display ``potential Bautin bifurcation'', and refer to the subset of the Hopf set where $L_1=0$ as the {\bf Bautin set}. Out of the 31 networks with potential Bautin bifurcation, $L_1$ can take all signs in 25 (the networks in the second row in Tables~\ref{table86summary}~and~\ref{table86classify}); and $L_1$ is nonnegative, but can definitely be zero, in the remaining $6$ (the final row in Tables~\ref{table86summary}~and~\ref{table86classify}). 

For all 31 networks with potential Bautin bifurcation, we were able to confirm that $L_2$ is always nonzero on the Bautin set, so the main nondegeneracy condition for Bautin bifurcation is satisfied. In 29 of the networks (including one network where the Bautin set has two distinct components -- see Note~\ref{noteL1zerotwice} in Appendix~\ref{appnetworks}), $L_2<0$ on the Bautin set. In the remaining two networks $L_2>0$ on the Bautin set. The Mathematica code for all of these computations is available on GitHub \cite{balazsgithub}.

For the six networks where $L_1 \geq 0$, it is immediate that the potential Bautin bifurcation cannot be unfolded by the rate constants (no supercritical Hopf bifurcations can occur nearby). In the remaining $25$ networks, we were able to confirm the transversality condition, i.e., the Bautin bifurcation is, indeed, unfolded by the rate constants. Altogether, we are able to make the following claims.

\begin{thm}
\label{thmBautin}
Up to dynamical equivalence, there are $31$ bimolecular $3$-species, $4$-reaction mass action networks with potential Bautin bifurcation. Of these, $25$ admit a nondegenerate Bautin bifurcation: amongst the behaviours which must occur in these networks is the creation of a stable and unstable periodic orbit in a fold bifurcation. The remaining $6$ networks, where $L_1$ is nonnegative, also admit multiple periodic orbits for some choices of rate constants. Further,
\begin{enumerate}[align=left,leftmargin=*]
\item For $29$ networks where $L_2 < 0$ on the Bautin set, there exist rate constants such that a linearly stable equilibrium coexists with a linearly stable periodic orbit. In particular, for some rate constants, there exists a locally invariant two dimensional manifold containing an asymptotically stable equilibrium surrounded by two periodic orbits: the inner one is repelling, and the outer one is attracting. 
\item For $2$ networks where $L_2 > 0$ on the Bautin set, there exist rate constants where an unstable equilibrium coexists with a linearly stable periodic orbit and an unstable periodic orbit. In particular, for some rate constants, there exists a locally invariant two dimensional manifold containing an unstable equilibrium surrounded by two periodic orbits: the inner one is attracting, and the outer one is repelling. 
\end{enumerate}
The networks which display these behaviours are listed in Note~\ref{noteBautin} in Appendix~\ref{appnetworks}.
\end{thm}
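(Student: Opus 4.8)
The plan is to run the program already set up in Theorem~\ref{thms3r4Hopf} one step further. For each of the $86$ networks admitting a nondegenerate Hopf bifurcation we have, after the recoordinatisation of Section~\ref{secrecoord} and the time rescaling of Remark~\ref{remHcurve}, a two‑parameter mass action system together with an explicit Hopf curve in the parameter plane and a closed symbolic expression for the first Lyapunov coefficient $L_1$ restricted to that curve (as in the worked example of Section~\ref{secworked}). The first step is a sign analysis of each of these $L_1$ expressions, which are rational functions of a single curve parameter: deciding whether $L_1$ vanishes, and whether it does so with a sign change or merely touches $0$, is a univariate real‑root problem. This isolates the $31$ networks with potential Bautin bifurcation, partitioned into the $25$ with a genuine sign change of $L_1$ and the $6$ where $L_1\ge 0$ but attains $0$. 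For all $31$ the Bautin set $\{L_1=0\}\cap(\text{Hopf set})$ is then a finite set of points — a single point except for the one network of Note~\ref{noteL1zerotwice}, where it has two components — and these points are written down explicitly.

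The second step is to compute, at each Bautin point, the second Lyapunov coefficient $L_2$ following \cite[Sections 8.7.1 and 8.7.3]{kuznetsov:2004}, and to certify its sign. Bimolecularity is a decisive simplification here: the recoordinatised vector field is quadratic, so the cubic term $C$ in its Taylor expansion, and all higher terms, vanish identically, and the formula for $L_2$ collapses to an expression built only from the Jacobian $A$, its resolvents $(-A)^{-1}$, $(2\omega i I-A)^{-1}$, $(3\omega i I-A)^{-1}$ and the like, the quadratic form $B$, and the critical eigenvectors $p,q$. Evaluating this at the known Bautin point yields a rational function of the remaining parameter, which we check symbolically to be nonzero there, tracking its sign to obtain $L_2<0$ in $29$ networks and $L_2>0$ in the other $2$. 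I expect this to be the main obstacle: although each computation is mechanical, the $n>2$ formula for $L_2$ is large, the intermediate resolvent solves produce bulky rational functions, and \emph{certifying} (not merely numerically evaluating) the sign along the Bautin set requires careful symbolic manipulation — hence the reliance on the Mathematica scripts on GitHub.

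The third step handles the unfolding for the $25$ networks with a sign change of $L_1$. The standard transversality condition for a codimension‑two Bautin point can be recast, exactly in the spirit of Remark~\ref{remtrans} and Lemma~\ref{lemtransverse}, as the requirement that the map $(x,\kappa)\mapsto\bigl(f(x,\kappa),\,G_{\mathrm{Hopf}}(x,\kappa),\,L_1(x,\kappa)\bigr)$ — with $G_{\mathrm{Hopf}}(x,\kappa)=\det\bigl((D_xf(x,\kappa))^{[2]}\bigr)$ the Hopf defining function — be a submersion at the Bautin point; checking that its Jacobian there has full rank is again a finite symbolic computation. With nondegeneracy ($L_2\neq 0$) and transversality in hand, \cite[Section 8.3]{kuznetsov:2004} supplies the complete local picture: a fold (saddle–node) bifurcation of periodic orbits emanates from the Bautin point, simultaneously creating a stable and an unstable periodic orbit, and on the two‑dimensional centre manifold — which is normally attracting because the persistent third eigenvalue stays real and negative — the dynamics is governed by the planar Bautin normal form $\dot\rho=\rho(\nu_1+L_1\rho^2+L_2\rho^4+\cdots)$. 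Reading off the two positive roots of $L_2 s^2+L_1 s+\nu_1=0$ in $s=\rho^2$ for $\nu_1=\mathrm{Re}\,\lambda$ of the appropriate small sign, together with normal attractivity of the manifold, gives precisely parts~(1) and~(2): for $L_2<0$ an asymptotically stable equilibrium surrounded by a repelling inner cycle and an attracting outer cycle (so a linearly stable equilibrium coexists with a linearly stable periodic orbit), and for $L_2>0$ the reversed stabilities.

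Finally, for the $6$ networks with $L_1\ge 0$ the Bautin transversality condition necessarily fails — the value $0$ is a constrained local minimum of $L_1$ along the Hopf curve, so no supercritical Hopf bifurcations occur nearby and the Bautin point cannot be transversally unfolded by the rate constants. Multiple periodic orbits nonetheless persist: all $6$ lie among the $29$ networks with $L_2<0$, so near the Bautin point the centre‑manifold normal form is $\dot\rho=\rho(\nu_1+L_1\rho^2+L_2\rho^4+\cdots)$ with $L_1>0$ at nearby points of the Hopf curve and $L_2<0$, and choosing rate constants that make $\nu_1=\mathrm{Re}\,\lambda$ slightly negative — possible because ordinary Hopf transversality was already established in Theorem~\ref{thms3r4Hopf} — forces $L_2 s^2+L_1 s+\nu_1$ to have two positive roots, hence two concentric periodic orbits coexisting with the (now stable) equilibrium. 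Assembling the four steps across all $86$ networks produces the counts $31$, $25$, $6$, $29$, $2$ and the behaviours tabulated in Note~\ref{noteBautin}.
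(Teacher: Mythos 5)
Your proposal follows the paper's proof essentially step for step: the same computational isolation of the $31$ networks via the sign of $L_1$ on the Hopf curve, the same symbolic computation and sign certification of $L_2$ on the Bautin set (exploiting the vanishing of cubic and higher Taylor terms), the same transversality check and appeal to Kuznetsov's standard Bautin theory for the $25$ networks with a sign change of $L_1$, and the same normal-form/persistence argument (a subcritical Hopf at a nearby point of the Hopf set where $L_1>0$, combined with the outer stable cycle forced by $L_2<0$) for the $6$ networks where $L_1\ge 0$. The approach and all key ingredients match the paper's.
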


For the 25 networks which admit a nondegenerate Bautin bifurcation, the claims in Theorem~\ref{thmBautin} follow immediately from standard theory (see \cite[Section 8.3]{kuznetsov:2004}). For the remaining six networks, the claims again largely follow constructions in \cite{kuznetsov:2004}. We omit a full proof, but sketch the arguments. Note first that in all six of these cases, $L_2<0$ on the Bautin set. Fix a network with potential Bautin bifurcation, let $\mathrm{H}$ be the Hopf set, and let $(x_0, \kappa_0) \in \mathrm{B} \subseteq \mathrm{H}$, where $\mathrm{B}$ is the Bautin set (on which we assume $L_2<0$). The key step is then to show, from careful examination of the Poincar\'e map associated with the normal form of the system on the parameter dependent center manifold of $(x_0, \kappa_0)$, that for any $(x_1, \kappa_1) \in \mathrm{H}$ sufficiently close to $(x_0, \kappa_0)$ and satisfying $L_1>0$, the center manifold of $x_1$ includes a linearly stable periodic orbit, say $\mathcal{O}_s$, surrounding $x_1$. On the other hand, a standard subcritical Hopf bifurcation occurs at $(x_1, \kappa_1)$, i.e., $(x_1, \kappa_1)$ has a neighbourhood, say $V$, which includes points $(x_2, \kappa_2)$ where now $x_2$ is linearly stable and is surrounded by an unstable periodic orbit, say $\mathcal{O}_u$. Provided $V$ is small enough, $\mathcal{O}_s$ continues to exist at rate constants $\kappa_2$. Thus we obtain, on the parameter dependent center manifold, a stable equilibrium $x_2$, an unstable periodic orbit $\mathcal{O}_u$, and a stable periodic orbit $\mathcal{O}_s$, for some choices of parameters close to $\kappa_0$, provided $(x_0, \kappa_0)$ has a neighbourhood in $\mathrm{H}$ which includes points where $L_1>0$. 

\begin{example}[Coexistence of a stable equilibrium and a stable periodic orbit]
We consider the following mass action network 
\[
\mathsf{X} \overset{\kappa_1}{\longrightarrow} 2\mathsf{X},\quad \mathsf{X}+\mathsf{Z} \overset{\kappa_2}{\longrightarrow} 2\mathsf{Y},\quad \mathsf{Y} \overset{\kappa_3}{\longrightarrow} \mathsf{Z},\quad 2\mathsf{Z} \overset{\kappa_4}{\longrightarrow} 0\,,
\]
giving rise to the ODE
\[
\left(\begin{array}{c}\dot x \\\dot y \\ \dot z \end{array}\right) = \left(\begin{array}{rrrr}1&-1&0&0\\0&2&-1&0\\0&-1&1&-2\end{array}\right)\,\left(\begin{array}{c}\kappa_1 x\\\kappa_2 xz\\ \kappa_3 y\\ \kappa_4 z^2\end{array}\right)\,,
\]
where $x,y$ and $z$ are, respectively, the concentrations of $\mathsf{X}$, $\mathsf{Y}$ and $\mathsf{Z}$. This is network 23a in Appendix~\ref{appnetworks}. The  Hopf set in parameter space is given by:
\[
\mathrm{H} = \left\{(\alpha, \beta, \gamma) \in \mathbb{R}^3_{+}\,:\,\alpha = \beta(\beta+3\gamma)/(2\beta+3\gamma)\right\}\,,
\]
where $\alpha = \kappa_1$, $\beta = \kappa_3$, $\gamma = 2\kappa_1\kappa_4/\kappa_2$. In this network, the first Lyapunov coefficient, $L_1$, is able to take all signs on $\mathrm{H}$. We are able to find values of the rate constants at which there appear, in numerical simulations, to be both a stable periodic orbit and a stable equilibrium, as predicted by Theorem~\ref{thmBautin} (see Figure~\ref{figs3r4_23}). The rate constants are chosen following the theory as described above: beginning with a point on the Bautin set, we first move into a region where $L_1 > 0$, while remaining on the Hopf set; we then move off the Hopf set so as to create an unstable periodic orbit. In the simulations we find that orbits rapidly converge to a two dimensional surface; and then (generically) spiral slowly towards either the stable equilibrium or the stable periodic orbit. Between the two is an unstable periodic orbit, whose stable manifold appears to separate the basins of attraction of the equilibrium and the stable periodic orbit.

\begin{figure}[!h]
\centering
\includegraphics[scale=0.35]{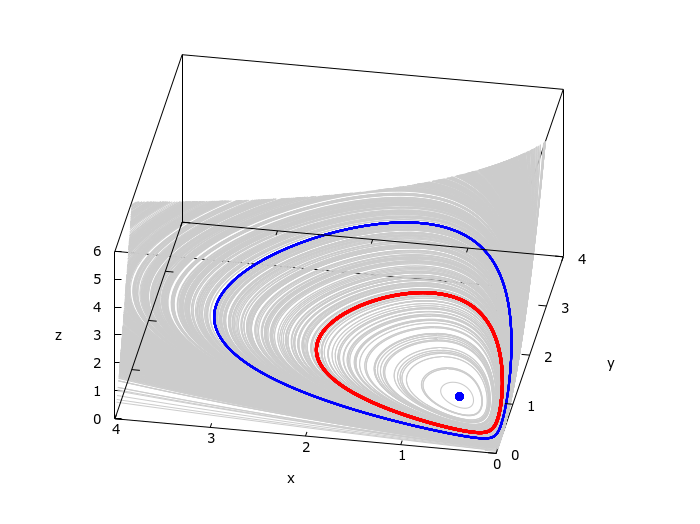}
\hspace{-1cm}\includegraphics[scale=0.35]{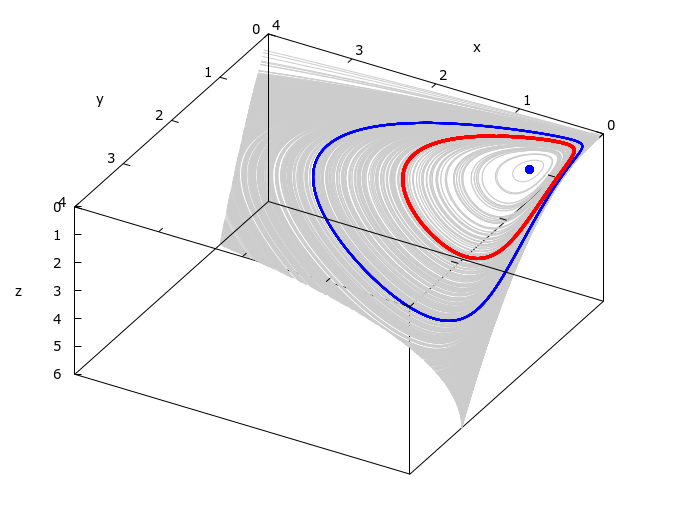}
\caption{Numerical simulations of network 23a. The same plot is shown from two different angles. We set rate constants $\kappa_1 = 0.65255, \, \kappa_2 = 1.0, \, \kappa_3 = 0.9$ and $\kappa_4 = 0.38312$. At these values of the rate constants the system appears to have two stable omega-limit sets: an equilibrium 
and a periodic orbit (shown in blue). 
Simulations show trajectories rapidly converging to what appears to be a two dimensional surface (shown in grey), and then slowly spiralling along the surface towards one of these sets. Apart from these two sets, the surface appears to include an unstable periodic orbit (shown in red).
}
\label{figs3r4_23}
\end{figure}

\end{example}

\section{Discussion and conclusions}
\label{secconclusions}

We remarked in the introduction that it was already observed by Wilhelm in \cite{Wilhelm2009} that $3$-species, $4$-reaction bimolecular networks are capable of Hopf bifurcation with mass action kinetics. In fact the network analysed by Wilhelm is the network numbered 40a in Appendix~\ref{appnetworks}. 

We were able to go further and fully classify the smallest bimolecular CRNs with the capacity for Hopf bifurcation, i.e., those with three species and four reactions. We found that, up to dynamical equivalence, 86 out of 6486 $(3,4,3)$ CRNs which admit nondegenerate equilibria, actually admit nondegenerate Hopf bifurcation. Moreover, $54$ of these networks admit a supercritical bifurcation resulting in the creation of a stable periodic orbit. 

We also find: (i) nondegenerate Bautin bifurcation in 25 networks; (ii) 31 networks which admit two nondegenerate periodic orbits; and (iii) 29 networks which admit a stable periodic orbit which coexists with a stable equilbrium. 

We find that easily checked necessary conditions for Hopf bifurcation completely determine the capacity of $3$-species, $4$-reaction, bimolecular, mass action networks for Hopf bifurcation, and almost completely determine their capacity for {\em nondegenerate} Hopf bifurcation. Up to dynamical equivalence, only a single exceptional network satisfies the necessary conditions, but fails conditions for nondegeneracy of the bifurcation. 

The exceptional network
\[
0 \rightarrow 2\mathsf{X}, \quad \mathsf{X}+\mathsf{Y} \rightarrow 2\mathsf{Y}, \quad \mathsf{Y}+\mathsf{Z} \rightarrow 2\mathsf{Z}, \quad \mathsf{X}+\mathsf{Z} \rightarrow 0\,
\]
is a nondegenerate network which robustly undergoes a degenerate Hopf bifurcation at all points on the bifurcation set. In fact, at bifurcation, the unique equilibrium is a center, and it can be proved that it has a unique, global center manifold: an unbounded two dimensional surface foliated by periodic orbits. This network thus admits a so-called vertical Hopf bifurcation. A detailed analysis of its dynamics can be found in \cite{BBHverticalHopf}. 

The work in this paper highlights why theoretical approaches are required for the study of oscillation in reaction networks. For example, stable oscillation in the ``fully open'' CRN
\[
\mathsf{X} + \mathsf{Z} \rightarrow 2\mathsf{Y}, \quad \mathsf{Y} + \mathsf{Z} \rightarrow 2\mathsf{Z}, \quad 0 \rightleftharpoons \mathsf{X}, \quad  0 \rightleftharpoons \mathsf{Y}, \quad 0 \rightleftharpoons \mathsf{Z}
\]
was found to occur in numerical simulations in \cite{banajiCRNosci}, and indeed {\em must} occur based on the theory in that paper because of the presence of network 11a in Appendix~\ref{appnetworks} as a subnetwork of maximal rank. The same theory implies that the fully open CRN
\[
\mathsf{X} + \mathsf{Z} \rightarrow \mathsf{Y} + \mathsf{Z} \rightarrow 2\mathsf{Z}, \quad  0 \rightleftharpoons \mathsf{X}, \quad  0 \rightleftharpoons \mathsf{Y}, \quad 0 \rightleftharpoons \mathsf{Z}
\]
must admit a nondegenerate, stable periodic orbit with mass action kinetics as it includes network 10a in Appendix~\ref{appnetworks} as a subnetwork of maximal rank. However, numerical simulations in \cite{banajiCRNosci} failed to identify this network as admitting stable oscillation with mass action kinetics, perhaps because stable oscillation occurs in a relatively small region of parameter space. 

A natural question is to what extent bimolecular $(3,4,3)$ mass action networks admitting Hopf bifurcation can be characterised by their network structure, described in graph theoretic terms. It is true that many of the networks capable of nondegenerate Hopf bifurcation presented in Appendix~\ref{appnetworks} have similar graphical representations, for example in terms of their Petri net graphs or DSR graphs. And it is known that such graphs can, indeed, encode important information about allowed dynamics \cite{craciunfeinbergSR,banajipetrinet,abphopf,baudier2018}. We present three examples to show, however, that any graph-theoretical characterisation of bimolecular $(3,4,3)$ mass action networks admitting Hopf bifurcation is likely to run into some complications. In each case, we present a pair of CRNs, where the second CRN differs from the first only in one stoichiometric coefficient in the product complex of one reaction. In each case both members of the pair are very similar in their various graphical representations, and in their associated differential equations; but the small change leads to significant changes in dynamical behaviour.

\begin{example}
\label{HopftoDynamicallyTrivial}
{\bf A small change makes the network dynamically trivial.} Consider the following pair of bimolecular $(3,4,3)$ networks:
\[
\mathsf{X} \rightarrow 2\mathsf{X}, \quad \mathsf{X}+\mathsf{Z} \rightarrow 2\mathsf{Y}, \quad \mathsf{X}+\mathsf{Y} \rightarrow \mathsf{Z}, \quad \mathsf{Z} \rightarrow 0\,
\]
and 
\[
\mathsf{X} \rightarrow 2\mathsf{X}, \quad \mathsf{X}+\mathsf{Z} \rightarrow \mathsf{Y}, \quad \mathsf{X}+\mathsf{Y} \rightarrow \mathsf{Z}, \quad \mathsf{Z} \rightarrow 0\,.
\]
The first appears as Network 21 in Appendix~\ref{appnetworks}, while the second is obtained from the first via a slight modification of the product complex of the second reaction. While the first admits a supercritical Hopf bifurcation, the second is dynamically trivial, and admits no positive limit sets at all.
\end{example}
While the situation in the previous example occurs frequently, it is also true that minor modifications can affect the capacity of a network for Hopf bifurcation, even if the network remains dynamically nontrivial. 
\begin{example} 
\label{HopftoNoHopf}
{\bf A small change leads to the loss of Hopf bifurcation.} Consider the following pair of bimolecular $(3,4,3)$ networks:
\[
\mathsf{X} \rightarrow 2\mathsf{X}, \quad 2\mathsf{X} \rightarrow 2\mathsf{Y}, \quad \mathsf{Y} \rightarrow 2\mathsf{Z}, \quad \mathsf{X}+\mathsf{Z} \rightarrow 0\,
\]
and
\[
\mathsf{X} \rightarrow 2\mathsf{X}, \quad 2\mathsf{X} \rightarrow 2\mathsf{Y}, \quad \mathsf{Y} \rightarrow \mathsf{Z}, \quad \mathsf{X}+\mathsf{Z} \rightarrow 0\,.
\]
The first appears as Network 26a in Appendix~\ref{appnetworks}, while the second is obtained from the first via a change to the product complex of the third reaction. The first network, with mass action kinetics, admits a supercritical Hopf bifurcation and, in fact, a nondegenerate Bautin bifurcation; while the second network admits no bifurcations of positive equilibria at all: the unique positive equilibrium of the second network is asymptotically stable for all choices of rate constants. 
\end{example}
In the opposite direction to the previous example, minor changes to a network can introduce more complex behaviours even amongst networks admitting Hopf bifurcation. 
\begin{example}
\label{NoBautintoBautin}
{\bf A small change introduces a codimension $2$ bifurcation.} Consider the following pair of bimolecular $(3,4,3)$ networks:
\[
0 \rightarrow \mathsf{X}, \quad \mathsf{X}+\mathsf{Y} \rightarrow 2\mathsf{Y}, \quad \mathsf{Y} \rightarrow \mathsf{Z}, \quad \mathsf{X}+\mathsf{Z} \rightarrow 0\,
\]
and
\[
0 \rightarrow \mathsf{X}, \quad \mathsf{X}+\mathsf{Y} \rightarrow 2\mathsf{Y}, \quad \mathsf{Y} \rightarrow 2\mathsf{Z}, \quad \mathsf{X}+\mathsf{Z} \rightarrow 0\,.
\]
These appear as Networks 3a and 6a in Appendix~\ref{appnetworks}. Both are capable of nondegenerate Hopf bifurcation. However, the second admits a nondegenerate Bautin bifurcation, while the first does not.
\end{example}

Finally, we can combine the approach taken in this paper with theory on the inheritance  of nondegenerate behaviours in CRNs summarised in \cite{banajisplitreacs} to make claims about larger networks with the potential for Hopf bifurcation. We mention just two of many possible results we can obtain from this process, and defer further analysis to future work. 
\begin{enumerate}[align=left,leftmargin=*]
\item Using the theory developed in \cite{bbhAMCrank} (see Remark~6 in \cite{bbhAMCrank}), we find that, up to isomorphism, 264 bimolecular $(4,4,3)$ CRNs, falling into 198 dynamically non-equivalent classes admit nondegenerate Hopf bifurcation simply because they include one of the bimolecular $(3,4,3)$ CRNs which admits nondegenerate Hopf bifurcation as a subnetwork in a natural sense. 

\item Using theory in \cite{banajiCRNosci} (see Remark~4.3 in \cite{banajiCRNosci}), we find that nondegenerate Hopf bifurcation can be predicted in $11192$ non-isomorphic bimolecular $(3,5,3)$ CRNs, as a consequence of a bimolecular $(3,4,3)$ subnetwork admitting nondegenerate Hopf bifurcation. These $(3,5,3)$ CRNs fall into $6176$ dynamically non-equivalent classes, of which $6129$ are not equivalent to any CRN with fewer reactions. 
\end{enumerate}
The code used to arrive at these numbers is available on GitHub \cite{muradgithub}.

\subsection*{Acknowledgements}
We would like to thank Josef Hofbauer, and the anonymous reviewers of this paper, for useful comments and suggestions which helped us to improve the paper.

\hspace{3cm}
\appendix
\section{Dynamical equivalence of dynamically nontrivial, bimolecular, $(n,n+1,n)$ networks}
\label{appdyniso}

Recall that we refer to two CRNs as {\bf dynamically equivalent} if we can relabel their species in such a way that they give rise to the same set of differential equations under the assumption of mass action kinetics. More precisely, consider two CRNs, say $\mathcal{R}_1$ and $\mathcal{R}_2$, on $n$ chemical species $\mathsf{X}_1, \ldots, \mathsf{X}_n$ and having, respectively $k_1$ and $k_2$ reactions. Suppose that they give rise to parameterised families of vector fields $f_1(x, \kappa)$ and $f_2(x, \nu)$ under the assumption of mass action kinetics. Here $x \in \mathbb{R}^n_{+}$ is the vector of species concentrations, and $\kappa \in \mathbb{R}^{k_1}_{+}$ and $\nu \in \mathbb{R}^{k_2}_{+}$ are the vectors of rate constants of $\mathcal{R}_1$ and $\mathcal{R}_2$ respectively. Then $\mathcal{R}_1$ and $\mathcal{R}_2$ are dynamically equivalent if, perhaps after permuting the species of one network,
\[
\{f_1(\cdot, \kappa)\colon \kappa \in \mathbb{R}^{k_1}_{+}\} = \{f_2(\cdot, \nu)\colon \nu \in \mathbb{R}^{k_2}_{+}\}\,.
\]
In other words,
\begin{enumerate}
\item for each $\tilde{\kappa} \in \mathbb{R}^{k_1}_{+}$, there exists $\tilde{\nu} \in \mathbb{R}^{k_2}_{+}$ such that $f_1(x, \tilde{\kappa}) = f_2(x, \tilde{\nu})$ for all $x \in \mathbb{R}^n_{+}$; and 
\item for each $\tilde{\nu} \in \mathbb{R}^{k_2}_{+}$, there exists $\tilde{\kappa} \in \mathbb{R}^{k_1}_{+}$ such that $f_1(x, \tilde{\kappa}) = f_2(x, \tilde{\nu})$ for all $x \in \mathbb{R}^n_{+}$.
\end{enumerate}

Dynamically equivalent CRNs are termed ``unconditionally confoundable'' in \cite{CraciunPanteaIdentifiability} where a necessary and sufficient condition for such equivalence is given. 

We wish to show that for the CRNs of interest to us in this paper, dynamical equivalence is itself equivalent to a simpler condition, which we term ``simple equivalence''. 

We will refer to two CRNs, say $\mathcal{R}_1$ and $\mathcal{R}_2$, as {\bf simply equivalent} if, perhaps after relabelling/reordering species and reactions, the following conditions hold:
\begin{enumerate}
\item They both have the same set of species, and the same number of reactions. 
\item The $i$th reactions of $\mathcal{R}_1$ and $\mathcal{R}_2$ have the same reactant complex.
\item The $i$th reaction vector of $\mathcal{R}_1$ is a positive multiple of the $i$th reaction vector of $\mathcal{R}_2$.  
\end{enumerate}

\enlargethispage{1\baselineskip}
We need two lemmas.

\begin{lemma}
\label{lemlindep}
In a dynamically nontrivial $(n,n+1,n)$ CRN, no set of $k$ reaction vectors can be linearly dependent for any $k \leq n$. 
\end{lemma}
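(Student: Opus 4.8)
The plan is to exploit the fact that for a dynamically nontrivial $(n,n+1,n)$ CRN the kernel of $\Gamma$ is as small and as ``generic'' as possible. Since $\Gamma \in \mathbb{R}^{n \times (n+1)}$ has rank $n$, the space $\ker\Gamma$ is exactly one-dimensional; and by dynamical nontriviality it contains a strictly positive vector $u$. A one-dimensional kernel spanned by a vector with \emph{no} zero entries is the crucial structural fact I would extract first.

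Next I would argue by contradiction. Suppose some set $S$ of $k \leq n$ reaction vectors (columns of $\Gamma$) is linearly dependent. Then there is a nonzero $w \in \mathbb{R}^{n+1}$ supported on $S$ (i.e.\ $w_j = 0$ for all $j \notin S$) with $\Gamma w = 0$, so $w \in \ker\Gamma$. Because $\ker\Gamma$ is one-dimensional and spanned by $u$, we must have $w = \lambda u$ for some $\lambda \neq 0$; hence every entry of $w$ is nonzero.

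Finally I would observe that this is impossible: since $|S| = k \leq n < n+1$, there is at least one index not in $S$, at which $w$ vanishes, contradicting that all entries of $w$ are nonzero. This forces the assumed linear dependence to be spurious, proving the claim.

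I do not expect any real obstacle here; the argument is essentially a one-line observation once the right facts are lined up. The only points requiring a moment of care are (i) invoking dynamical nontriviality to get a \emph{strictly positive} — hence full-support — kernel vector, and (ii) using the hypothesis $k \leq n$ (rather than $k \leq n+1$) precisely so that the support of $w$ is a \emph{proper} subset of the index set, which is exactly what yields the contradiction.
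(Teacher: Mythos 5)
Your proof is correct and is essentially the paper's own argument: both exploit that $\ker\Gamma$ is one-dimensional, so a dependence among $k\le n$ columns yields a kernel vector with a zero entry, which is incompatible with the kernel containing a strictly positive vector. The only cosmetic difference is the direction of the contradiction (you show the dependence vector must be a multiple of the positive $u$; the paper shows the kernel can then contain no positive vector), which is the same observation.
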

\begin{proof}
Let $\Gamma$ be the stoichiometric matrix of an $(n,n+1,n)$ CRN. Suppose that reactions $i_1, \ldots, i_k$ are linearly dependent for some $k \leq n$. Then there exists a vector $v \in \mathrm{ker}\,\Gamma$ with support on this set, i.e., $v_j \neq 0$ if and only if $j \in \{i_1,\ldots, i_k\}$. But $\mathrm{ker}\,\Gamma$ is one dimensional, so all vectors in $\mathrm{ker}\,\Gamma$ are multiples of $v$. Hence $\mathrm{ker}\,\Gamma$ can include no positive vector, and the network cannot be dynamically nontrivial.
\end{proof}

\begin{lemma}
\label{lemtotRC}
Let $n \geq 2$, and let $\mathcal{R}$ be a dynamically nontrivial $(n, n+1, n)$ CRN. If $\mathcal{R}$ is nondegenerate, it must have $n+1$ distinct reactant complexes. If $\mathcal{R}$ is bimolecular, then it must have at least $2$ distinct reactant complexes.  
\end{lemma}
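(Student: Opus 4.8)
The statement splits into two unrelated claims, and the organising observation for both is that the reactant complexes of $\mathcal{R}$ are exactly the columns of the left stoichiometric matrix $\Gamma_l$: a reaction has reactant complex $\sum_i a_i\mathsf{X}_i$ precisely when the corresponding column of $\Gamma_l$ equals $(a_1,\dots,a_n)^{\mathrm{t}}$, so ``$k$ distinct reactant complexes'' means ``$\Gamma_l$ has $k$ distinct columns''. For the nondegenerate case the plan is then a one-line rank count: condition (ND) says the $(n+1)\times(n+1)$ matrix $[\Gamma_l^{\mathrm{t}}\,|\,-\mathbf{1}]$ has rank $n+1$, and since deleting the last column cannot leave fewer than $n$ independent columns, this forces $\mathrm{rank}\,\Gamma_l^{\mathrm{t}}=n$, i.e. $\mathrm{rank}\,\Gamma_l=n$. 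An $n\times(n+1)$ matrix of rank $n$ has $n$ linearly independent, hence $n$ distinct, columns, so $\mathcal{R}$ has at least $n$ distinct reactant complexes. (Dynamical nontriviality is not needed for this half.)

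For the bimolecular case I would argue by contradiction, assuming all $n+1$ reactions share a single reactant complex $a$ with total molecularity $|a|:=\sum_i a_i\le 2$. Writing the $j$th reaction as $a\to c_j$, the $j$th column of $\Gamma$ is $c_j-a$, and dynamical nontriviality supplies a positive $\kappa\in\mathbb{R}^{n+1}_{+}$ with $\sum_j\kappa_j(c_j-a)=0$, equivalently $\sum_j\kappa_j c_j=\big(\sum_j\kappa_j\big)a$. The first key step is to deduce that $a$ is fully supported, i.e.\ $a_i\ge 1$ for every $i$: if $a_i=0$, coordinate $i$ of this identity reads $\sum_j\kappa_j(c_j)_i=0$ with every summand nonnegative, forcing $(c_j)_i=0$ for all $j$; then $\mathsf{X}_i$ occurs in no reactant or product complex, so row $i$ of $\Gamma$ vanishes, contradicting $\mathrm{rank}\,\Gamma=n$. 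Together with $|a|\le 2$ and $n\ge 2$ this pins down $n=2$ and $a=\mathsf{X}_1+\mathsf{X}_2$. The second key step is to apply the functional $\mathbf{1}^{\mathrm{t}}$ to $\sum_j\kappa_j c_j=\big(\sum_j\kappa_j\big)a$, giving $\sum_j\kappa_j(|c_j|-2)=0$; since each $|c_j|\le 2$ by bimolecularity and each $\kappa_j>0$, every product complex $c_j$ has molecularity exactly $2$. On two species the only such complexes are $2\mathsf{X}_1$, $2\mathsf{X}_2$ and $\mathsf{X}_1+\mathsf{X}_2$, so every reaction vector $c_j-a$ lies in $\{(1,-1)^{\mathrm{t}},(-1,1)^{\mathrm{t}},(0,0)^{\mathrm{t}}\}$, which spans only a $1$-dimensional subspace of $\mathbb{R}^2$. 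This contradicts the fact that the $n+1=3$ reaction vectors span the rank-$2$ stoichiometric subspace, so $\mathcal{R}$ must have at least two distinct reactant complexes.

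The nondegenerate half is routine; the real content is in the bimolecular half, and the step I expect to be most delicate is extracting the right consequences of dynamical nontriviality, since it must be used twice and in complementary ways — coordinatewise, to force $a$ to have full support (hence $n=2$), and then through the total-molecularity functional $\mathbf{1}^{\mathrm{t}}$, to force all product complexes to be strictly quadratic. Once both reductions are in place the remaining check over the finitely many bimolecular complexes on two species is immediate, so I do not anticipate further difficulty.
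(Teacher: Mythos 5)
Your proof is correct, and the first half (nondegenerate $\Rightarrow$ at least $n$ reactant complexes) is essentially the paper's argument: both reduce to the observation that fewer than $n$ distinct columns of $\Gamma_l$ forces $\mathrm{rank}\,\Gamma_l<n$, which is incompatible with condition (ND). For the bimolecular half, however, you take a genuinely different route. The paper argues via convex geometry: dynamical nontriviality plus $\mathrm{rank}\,\Gamma=n$ force the \emph{positive} span of the reaction vectors to be all of $\mathbb{R}^n$ (solve $\Gamma v=w$ and shift $v$ by a large positive kernel vector), whereas every bimolecular complex lies on the boundary of the simplex $\{x\ge 0,\ \mathbf{1}^\mathrm{t}x\le 2\}$, so reaction vectors emanating from a single reactant complex all point into a supporting half-space and cannot positively span $\mathbb{R}^n$. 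Your argument instead extracts two scalar consequences of the kernel identity $\sum_j\kappa_j c_j=(\sum_j\kappa_j)a$ — coordinatewise positivity to force $a$ to have full support (hence $n=2$), and the total-molecularity functional to force $|c_j|=2$ — and finishes with an explicit check of the three quadratic complexes on two species. Both are complete; the paper's version is more conceptual and makes transparent why bimolecularity (boundary position of the complexes) is the real obstruction, while yours is more elementary and self-contained but leans on the accidental collapse to $n=2$, so it would not adapt as directly to, say, trimolecular variants of the question. A small remark: your parenthetical that dynamical nontriviality is not needed for the nondegenerate half is fair, since (ND) alone gives $\mathrm{rank}\,\Gamma_l=n$, though the paper's definition of nondegeneracy is only set up for dynamically nontrivial networks in the first place.
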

\begin{proof}
The first claim is an immediate consequence of the affine independence of the reactant complexes of a nondegenerate, dynamically nontrivial, $(n, n+1, n)$ network (see Remark~\ref{remND}).

Suppose $\mathcal{R}$ is bimolecular and has stoichiometric matrix $\Gamma$. Note that the {\em positive} span of the reaction vectors of a dynamically nontrivial $(n,n+1,n)$ CRN must be $\mathbb{R}^n$: we can solve any equation of the form $\Gamma \,v = w$ for $v$ as $\Gamma$ has rank $n$; and, moreover, we can assume without loss of generality that the solution $v$ is positive, by adding to $v$ if necessary an arbitrary multiple of some element of $\mathrm{ker}_{+}\,\Gamma$. On the other hand, all possible bimolecular complexes on $n \geq 2$ species lie on the boundary of their convex hull in $\mathbb{R}^n$, and so it is immediate that the positive span of any set of reaction vectors attached to one of these complexes, and terminating on these complexes, is at most a half-space in $\mathbb{R}^n$.
\end{proof}

We are now able to claim that for the networks we consider here, dynamical equivalence and simple equivalence coincide.
\begin{thm}
\label{thmdyniso}
Let $\mathcal{R}_1$ and $\mathcal{R}_2$ be two CRNs, and let $n \in \mathbb{N}$.
\begin{enumerate}
\item If $\mathcal{R}_1$ and $\mathcal{R}_2$ are simply equivalent, then they are dynamically equivalent.
\item If $\mathcal{R}_1$ and $\mathcal{R}_2$ are dynamically nontrivial, dynamically equivalent, $(n,n+1,n)$ CRNs which are either nondegenerate or bimolecular, then they are simply equivalent.
\end{enumerate}

\end{thm}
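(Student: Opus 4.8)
Here I would argue by direct substitution. After the relabelling and reordering furnished by simple equivalence, both networks have the same species, the same number $m$ of reactions, the $i$th reactions share a reactant complex with monomial $x^{a_i}$, and $\gamma_i^{(1)}=c_i\gamma_i^{(2)}$ for some $c_i>0$, where $\gamma_i^{(\ell)}$ denotes the $i$th reaction vector of $\mathcal{R}_\ell$. Then $f_1(x,\kappa)=\sum_{i=1}^m\kappa_i x^{a_i}\gamma_i^{(1)}=\sum_{i=1}^m(c_i\kappa_i)x^{a_i}\gamma_i^{(2)}=f_2(x,\nu)$ with $\nu_i:=c_i\kappa_i$, and since $\kappa\mapsto(c_i\kappa_i)_i$ is a bijection of $\mathbb{R}^m_{+}$ the two families of vector fields agree; hence the networks are dynamically equivalent. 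The species relabelling allowed in the definition of simple equivalence is exactly the one permitted in the definition of dynamical equivalence, so nothing further is needed.

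\textbf{Part (2): dynamical equivalence $\Rightarrow$ simple equivalence.} The plan is to read off a matching of reactions from equality of the two families of mass action vector fields. First I would use dynamical equivalence to relabel species so that $\{f_1(\cdot,\kappa)\colon\kappa\in\mathbb{R}^{n+1}_{+}\}=\{f_2(\cdot,\nu)\colon\nu\in\mathbb{R}^{n+1}_{+}\}$. For $\ell=1,2$ let $Y_\ell$ be the set of distinct reactant complexes of $\mathcal{R}_\ell$, and for $y\in Y_\ell$ let $C_\ell^y$ be the set of strictly positive combinations of the reaction vectors of $\mathcal{R}_\ell$ out of $y$ (equivalently, the relative interior of the cone they generate). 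Writing $f_\ell(\cdot,\kappa)=\sum_{y\in Y_\ell}x^y w_\ell^y(\kappa)$ with $w_\ell^y$ the corresponding partial sum, and using that rate constants attached to distinct reactant complexes are independent coordinates of $\kappa$ while the monomials $\{x^y\colon y\in Y_1\cup Y_2\}$ are linearly independent as functions on $\mathbb{R}^n_{+}$, I would identify $\{f_\ell(\cdot,\kappa)\colon\kappa\}$ with the Cartesian product $\prod_y C_\ell^y$ (setting $C_\ell^y:=\{0\}$ when $y\notin Y_\ell$). Since no reaction vector vanishes — a zero column of $\Gamma_\ell$ would place a coordinate vector in $\ker\Gamma_\ell$, impossible as this one-dimensional kernel is spanned by a strictly positive vector — each $C_\ell^y$ with $y\in Y_\ell$ is nonzero, so comparing the two products forces $Y_1=Y_2=:Y$ and $C_1^y=C_2^y$ for every $y\in Y$.

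\textbf{Part (2), finishing.} Now I would bring in the hypotheses. By Lemma~\ref{lemtotRC} each network has at least two distinct reactant complexes, so every reactant complex carries at most $n$ reactions in each network; by Lemma~\ref{lemlindep} the reaction vectors out of a fixed complex are then linearly independent, so $C_\ell^y$ is the relative interior of a \emph{simplicial} cone, of dimension equal to the number $m_\ell(y)$ of reactions of $\mathcal{R}_\ell$ out of $y$, with extreme rays exactly the rays spanned by those reaction vectors. Equality $C_1^y=C_2^y$ then forces $m_1(y)=m_2(y)$ (equal dimensions) and equality of the two sets of extreme rays, hence a bijection between the reactions of $\mathcal{R}_1$ out of $y$ and those of $\mathcal{R}_2$ out of $y$ under which paired reaction vectors are positive scalar multiples. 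Reindexing the reactions of both networks complex by complex according to these bijections makes the $i$th reactions share a reactant complex and have proportional reaction vectors, i.e., exhibits the networks as simply equivalent.

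\textbf{Expected main obstacle.} The real content is twofold: the reduction from ``same family of vector fields'' to ``same relative-interior cone at each reactant complex,'' and the guarantee that each of these cones is simplicial. If some reactant complex carried all $n+1$ reactions — precisely the case of a network with a single reactant complex — its cone could fail to be pointed, or be generated by vectors that are not all extreme, and equality of cones would then no longer recover the individual reaction vectors up to positive scaling. Excluding this is exactly what the ``nondegenerate or bimolecular'' hypothesis buys us through Lemma~\ref{lemtotRC}; for $n=1$ the nondegenerate case forces two distinct reactant complexes by a molecularity count and the bimolecular case is immediate, so that boundary case needs no special treatment. Everything else is routine bookkeeping.
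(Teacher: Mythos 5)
Your proof is correct and follows the same overall architecture as the paper's: part (1) by direct substitution with a rescaling bijection on the rate constants, and part (2) by reducing dynamical equivalence to equality, complex by complex, of the cones generated by the outgoing reaction vectors, then using Lemma~\ref{lemtotRC} (at least two reactant complexes, so at most $n$ reactions per complex) together with Lemma~\ref{lemlindep} (no proper subset of reaction vectors is dependent) to conclude each such cone is simplicial with generators unique up to positive scaling. The one place you diverge is the first reduction in part (2): the paper simply cites Theorem~4.4 of \cite{CraciunPanteaIdentifiability} for the fact that dynamically equivalent networks share reactant complexes and have identical cones at each shared complex, whereas you derive this from scratch via linear independence of the monomials $x^y$ and the Cartesian-product structure of the coefficient tuples, adding the (correct) observation that no reaction vector can vanish because the one-dimensional kernel of $\Gamma$ is spanned by a strictly positive vector. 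Your version is more self-contained and makes explicit exactly which facts about mass action families are being used; the paper's is shorter at the cost of an external dependency. Both handle $n=1$ by dismissing it as trivial, which is legitimate.
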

\begin{proof}
(1) Suppose $\mathcal{R}_1$ and $\mathcal{R}_2$ are simply equivalent. After renaming and reordering species and reactions if necessary, $\mathcal{R}_1$ and $\mathcal{R}_2$ have the same left stoichiometric matrix, say $\Gamma_l$. Moreover, by definition, their stoichiometric matrices, say $\Gamma_1$ and $\Gamma_2$, are related via $\Gamma_2 = \Gamma_1\Delta$ where $\Delta$ is a positive diagonal matrix. They thus give rise to sets of dynamical systems $\{\Gamma_1 (\kappa \circ x^{\Gamma_l^\mathrm{t}})\,:\, \kappa \in \mathbb{R}^m_{+}\}$ and $\{\Gamma_1\Delta (\kappa \circ x^{\Gamma_l^\mathrm{t}})\,:\,\kappa \in \mathbb{R}^m_{+}\}$ respectively, which are clearly identical. 

(2) Now fix $n \geq 1$ and suppose that $\mathcal{R}_1$ and $\mathcal{R}_2$ are dynamically nontrivial, nondegenerate, $(n,n+1,n)$ CRNs, which are also dynamically equivalent. We know from the proof of Theorem~4.4 in \cite{CraciunPanteaIdentifiability} that two CRNs are dynamically equivalent if and only if (perhaps after permuting species) they share the same reactant complexes and, for each reactant complex, the cones spanned by the reaction vectors associated with this complex are identical in both networks. We want to show that $\mathcal{R}_1$ and $\mathcal{R}_2$ are simply equivalent. 
 
The claim is trivial in the case $n=1$, so we assume $n \geq 2$. 
\begin{itemize}
\item[(i)] By Lemma~\ref{lemlindep}, in a dynamically nontrivial $(n,n+1,n)$ CRN, the only set of linearly dependent reaction vectors is the full set containing all $n+1$ reaction vectors.
\item[(ii)] By Lemma~\ref{lemtotRC}, CRNs satisfying the hypotheses of the theorem must have at least two reactant complexes. So, each reactant complex is the source for at most $n$ reactions in the network. 
\end{itemize}
Combining these two observations, the reaction vectors associated with any reactant complex must be linearly independent. They thus span a simplicial cone whose generators are, up to positive scaling, unique. Since $\mathcal{R}_1$ and $\mathcal{R}_2$ are dynamically equivalent, the sets of reaction vectors associated with any reactant complex must be identical in each network, up to positive scaling. This implies that $\mathcal{R}_1$ and $\mathcal{R}_2$ are simply equivalent.
\end{proof}

\subsection{Equivalent dynamics in CRNs which fail to be dynamically equivalent}
\label{secnondeq}
In the definition of dynamical equivalence, we do not allow transformations other than the permutation of species. It can occur, however, that two networks fail to be dynamically equivalent, but are nevertheless equivalent if we allow a wider range of transformations. Example~\ref{exnondeq} provides an instance of a pair of $(3,4,3)$ CRNs which fail to be dynamically equivalent, but nevertheless give rise to identical dynamics if we allow rescaling of species concentrations and reparameterisation.

\begin{example}
\label{exnondeq}
Consider the two CRNs
\[
\mathsf{X} \rightarrow 2\mathsf{X}, \quad \mathsf{X}+\mathsf{Z} \rightarrow \mathsf{Y}+\mathsf{Z}, \quad \mathsf{Y} \rightarrow 2\mathsf{Z}, \quad 2\mathsf{Z} \rightarrow \mathsf{0},
\]
and
\[
\mathsf{X} \rightarrow 2\mathsf{X}, \quad \mathsf{X}+\mathsf{Z} \rightarrow \mathsf{Y}+\mathsf{Z}, \quad \mathsf{Y} \rightarrow \mathsf{Z}, \quad 2\mathsf{Z} \rightarrow \mathsf{0}\,.
\]
These appear as networks 14a and 15a in Appendix~\ref{appnetworks}. If we carry out the recoordinatisation in Section~\ref{secrecoord}, the two networks in fact give rise to identical sets of differential equations. Thus the dynamics of the two networks is identical after a recoordinatisation and reparameterisation.
\end{example}

A list of equivalences of this kind which we were able to find amongst the bimolecular $3$-species, $4$-reaction networks admitting nondegenerate Hopf bifurcation, is given in Note~\ref{notenonsimpleequiv} in Appendix~\ref{appnetworks}.

\section{All bimolecular, $3$-species, $4$-reaction CRNs admitting nondegenerate Hopf bifurcation with mass action kinetics}
\label{appnetworks}

We provide a complete list of bimolecular $(3,4,3)$ networks admitting nondegenerate Hopf bifurcation with mass action kinetics. First, in Table~\ref{table86classify}, we revisit the classification of these networks according to reactant molecularity and the sign of the first Lyapunov coefficient (Table~\ref{table86summary}), but this time listing network numbers as they appear in the list to follow. 

\bgroup
\def\arraystretch{1.5}
\begin{table}[!htbp]
\begin{center}
\begin{tabular}{c"c|c|c|c|}
\multicolumn{1}{c}{}&\multicolumn{4}{c}{Reactant molecularity}\\
        & $(0,1,2,2)$ & $(1,1,2,2)$ & $(0,2,2,2)$ & $(1,2,2,2)$ \\
\thickhline
$L_1<0$ & $1-5$ & $14 - 21$ & $35 - 39$ & $40 - 53$ \\
\hline
$L_1 \gtreqless 0$ & $6$ & $22 - 27$ & $-$ & $54 - 71$ \\
\hline
$L_1>0$ & $7$ & $28 - 34$ & $-$ & $72 - 86$ \\
\hline
$L_1 \geq 0$ & $8 -  13$ & $-$ & $-$ & $-$ \\
\hline
\end{tabular}
\end{center}
\caption{\label{table86classify}Classification of the 86 non-equivalent bimolecular $(3,4,3)$ networks admitting nondegenerate Hopf bifurcation according to reactant molecularity and the possible signs of the first Lyapunov coefficient, $L_1$, at bifurcation (see Table~\ref{table86summary}). The numbers in each category refer to the CRN numbers in the list to follow.}
\end{table}
\egroup

In the following list, dynamically equivalent, but non-isomorphic, CRNs are given the same number, but differentiated by an additional letter (e.g., 1a and 1b).

\vspace{1cm}
\begin{center}
\begin{longtable}[!htbp]{|c@{\hspace{8pt}}lr@{\hspace{4pt}$\to$\hspace{4pt}}lr@{\hspace{4pt}$\to$\hspace{4pt}}lr@{\hspace{4pt}$\to$\hspace{4pt}}lr@{\hspace{4pt}$\to$\hspace{4pt}}l|}
\multicolumn{10}{c}{reactant molecularity $(0,1,2,2)$}\\
\cline{1-10}
\multirow{10}{*}{$L_1<0$} & 1a & $\mathsf{0}$ & $\mathsf{X}$ & $\mathsf{X}$ & $\mathsf{Y}$ & $\mathsf{Y}+\mathsf{Z}$ & $2\mathsf{Z}$ & $\mathsf{X}+\mathsf{Z}$ & $\mathsf{0}$ \\
& 1b & $\mathsf{0}$ & $2\mathsf{X}$ & $\mathsf{X}$ & $\mathsf{Y}$ & $\mathsf{Y}+\mathsf{Z}$ & $2\mathsf{Z}$ & $\mathsf{X}+\mathsf{Z}$ & $\mathsf{0}$ \\
& 2a & $\mathsf{0}$ & $\mathsf{X}$ & $\mathsf{X}$ & $2\mathsf{Y}$ & $\mathsf{Y}+\mathsf{Z}$ & $2\mathsf{Z}$ & $\mathsf{X}+\mathsf{Z}$ & $\mathsf{0}$ \\
& 2b & $\mathsf{0}$ & $2\mathsf{X}$ & $\mathsf{X}$ & $2\mathsf{Y}$ & $\mathsf{Y}+\mathsf{Z}$ & $2\mathsf{Z}$ & $\mathsf{X}+\mathsf{Z}$ & $\mathsf{0}$ \\
& 3a & $\mathsf{0}$ & $\mathsf{X}$ & $\mathsf{X}+\mathsf{Y}$ & $2\mathsf{Y}$ & $\mathsf{Y}$ & $\mathsf{Z}$ & $\mathsf{X}+\mathsf{Z}$ & $\mathsf{0}$ \\
& 3b & $\mathsf{0}$ & $2\mathsf{X}$ & $\mathsf{X}+\mathsf{Y}$ & $2\mathsf{Y}$ & $\mathsf{Y}$ & $\mathsf{Z}$ & $\mathsf{X}+\mathsf{Z}$ & $\mathsf{0}$ \\
& 4a & $\mathsf{0}$ & $\mathsf{X}$ & $\mathsf{X}+\mathsf{Y}$ & $2\mathsf{Y}$ & $\mathsf{Y}$ & $\mathsf{X}+\mathsf{Z}$ & $\mathsf{X}+\mathsf{Z}$ & $\mathsf{0}$ \\
& 4b & $\mathsf{0}$ & $2\mathsf{X}$ & $\mathsf{X}+\mathsf{Y}$ & $2\mathsf{Y}$ & $\mathsf{Y}$ & $\mathsf{X}+\mathsf{Z}$ & $\mathsf{X}+\mathsf{Z}$ & $\mathsf{0}$ \\
& 5a & $\mathsf{Z}$ & $\mathsf{X}+\mathsf{Z}$ & $\mathsf{X}+\mathsf{Y}$ & $2\mathsf{Y}$ & $\mathsf{Y}+\mathsf{Z}$ & $\mathsf{0}$ & $\mathsf{0}$ & $\mathsf{Z}$ \\
& 5b & $\mathsf{Z}$ & $\mathsf{X}+\mathsf{Z}$ & $\mathsf{X}+\mathsf{Y}$ & $2\mathsf{Y}$ & $\mathsf{Y}+\mathsf{Z}$ & $\mathsf{0}$ & $\mathsf{0}$ & $2\mathsf{Z}$ \\
\multicolumn{10}{|c|}{\cellcolor{Gray}}\\
\multirow{2}{*}{$L_1\gtreqless 0$}
& 6a & $\mathsf{0}$ & $\mathsf{X}$ & $\mathsf{X}+\mathsf{Y}$ & $2\mathsf{Y}$ & $\mathsf{Y}$ & $2\mathsf{Z}$ & $\mathsf{X}+\mathsf{Z}$ & $\mathsf{0}$ \\
& 6b & $\mathsf{0}$ & $2\mathsf{X}$ & $\mathsf{X}+\mathsf{Y}$ & $2\mathsf{Y}$ & $\mathsf{Y}$ & $2\mathsf{Z}$ & $\mathsf{X}+\mathsf{Z}$ & $\mathsf{0}$ \\
\multicolumn{10}{|c|}{\cellcolor{Gray}}\\
\multirow{1}{*}{$L_1>0$} & 7 & $\mathsf{0}$ & $\mathsf{X}+\mathsf{Y}$ & $\mathsf{X}+\mathsf{Z}$ & $\mathsf{Y}+\mathsf{Z}$ & $\mathsf{Y}+\mathsf{Z}$ & $2\mathsf{Z}$ & $\mathsf{Z}$ & $\mathsf{0}$ \\
\multicolumn{10}{|c|}{\cellcolor{Gray}}\\
\multirow{8}{*}{$L_1\geq0$} 
& 8 & $\mathsf{0}$ & $\mathsf{X}+\mathsf{Y}$ & $\mathsf{X}+\mathsf{Z}$ & $\mathsf{Y}$ & $\mathsf{Y}+\mathsf{Z}$ & $2\mathsf{Z}$ & $\mathsf{Z}$ & $\mathsf{0}$ \\
& 9 & $\mathsf{0}$ & $\mathsf{X}+\mathsf{Y}$ & $\mathsf{X}+\mathsf{Z}$ & $2\mathsf{Y}$ & $\mathsf{Y}+\mathsf{Z}$ & $2\mathsf{Z}$ & $\mathsf{Z}$ & $\mathsf{0}$ \\
& 10a & $\mathsf{0}$ & $\mathsf{X}$ & $\mathsf{X}+\mathsf{Z}$ & $\mathsf{Y}+\mathsf{Z}$ & $\mathsf{Y}+\mathsf{Z}$ & $2\mathsf{Z}$ & $\mathsf{Z}$ & $\mathsf{0}$ \\
& 10b & $\mathsf{0}$ & $2\mathsf{X}$ & $\mathsf{X}+\mathsf{Z}$ & $\mathsf{Y}+\mathsf{Z}$ & $\mathsf{Y}+\mathsf{Z}$ & $2\mathsf{Z}$ & $\mathsf{Z}$ & $\mathsf{0}$ \\
& 11a & $\mathsf{0}$ & $\mathsf{X}$ & $\mathsf{X}+\mathsf{Z}$ & $2\mathsf{Y}$ & $\mathsf{Y}+\mathsf{Z}$ & $2\mathsf{Z}$ & $\mathsf{Z}$ & $\mathsf{0}$ \\
& 11b & $\mathsf{0}$ & $2\mathsf{X}$ & $\mathsf{X}+\mathsf{Z}$ & $2\mathsf{Y}$ & $\mathsf{Y}+\mathsf{Z}$ & $2\mathsf{Z}$ & $\mathsf{Z}$ & $\mathsf{0}$ \\
& 12 & $\mathsf{0}$ & $\mathsf{X}+\mathsf{Z}$ & $\mathsf{X}+\mathsf{Y}$ & $2\mathsf{Y}$ & $\mathsf{Y}$ & $\mathsf{Z}$ & $\mathsf{Y}+\mathsf{Z}$ & $\mathsf{X}$ \\
& 13 & $\mathsf{0}$ & $\mathsf{X}+\mathsf{Z}$ & $\mathsf{X}+\mathsf{Y}$ & $2\mathsf{Y}$ & $\mathsf{Y}$ & $2\mathsf{Z}$ & $\mathsf{Y}+\mathsf{Z}$ & $\mathsf{X}$ \\
\cline{1-10}
\multicolumn{10}{c}{}\\
\multicolumn{10}{c}{}\\
\multicolumn{10}{c}{reactant molecularity $(1,1,2,2)$}\\
\cline{1-10}
\multirow{10}{*}{$L_1<0$}& 14a & $\mathsf{X}$ & $2\mathsf{X}$ & $\mathsf{X}+\mathsf{Z}$ & $\mathsf{Y}+\mathsf{Z}$ & $\mathsf{Y}$ & $2\mathsf{Z}$ & $2\mathsf{Z}$ & $\mathsf{0}$ \\
& 14b & $\mathsf{X}$ & $2\mathsf{X}$ & $\mathsf{X}+\mathsf{Z}$ & $\mathsf{Y}+\mathsf{Z}$ & $\mathsf{Y}$ & $2\mathsf{Z}$ & $2\mathsf{Z}$ & $\mathsf{Z}$ \\
& 15a & $\mathsf{X}$ & $2\mathsf{X}$ & $\mathsf{X}+\mathsf{Z}$ & $\mathsf{Y}+\mathsf{Z}$ & $\mathsf{Y}$ & $\mathsf{Z}$ & $2\mathsf{Z}$ & $\mathsf{0}$ \\
& 15b & $\mathsf{X}$ & $2\mathsf{X}$ & $\mathsf{X}+\mathsf{Z}$ & $\mathsf{Y}+\mathsf{Z}$ & $\mathsf{Y}$ & $\mathsf{Z}$ & $2\mathsf{Z}$ & $\mathsf{Z}$ \\
& 16 & $\mathsf{X}$ & $2\mathsf{X}$ & $\mathsf{X}+\mathsf{Z}$ & $\mathsf{Y}+\mathsf{Z}$ & $\mathsf{Y}$ & $\mathsf{Z}$ & $2\mathsf{Z}$ & $\mathsf{Y}$ \\
& 17 & $\mathsf{X}$ & $2\mathsf{X}$ & $\mathsf{X}+\mathsf{Z}$ & $\mathsf{Y}+\mathsf{Z}$ & $\mathsf{X}+\mathsf{Y}$ & $\mathsf{Z}$ & $\mathsf{Z}$ & $\mathsf{0}$ \\
& 18 & $\mathsf{X}$ & $2\mathsf{X}$ & $\mathsf{X}+\mathsf{Z}$ & $\mathsf{Y}+\mathsf{Z}$ & $\mathsf{X}+\mathsf{Y}$ & $2\mathsf{Z}$ & $\mathsf{Z}$ & $\mathsf{0}$ \\
& 19 & $\mathsf{X}$ & $2\mathsf{X}$ & $\mathsf{X}+\mathsf{Z}$ & $\mathsf{Y}$ & $\mathsf{X}+\mathsf{Y}$ & $2\mathsf{Z}$ & $\mathsf{Z}$ & $\mathsf{0}$ \\
& 20 & $\mathsf{X}$ & $2\mathsf{X}$ & $\mathsf{X}+\mathsf{Z}$ & $2\mathsf{Y}$ & $\mathsf{X}+\mathsf{Y}$ & $2\mathsf{Z}$ & $\mathsf{Z}$ & $\mathsf{0}$ \\
& 21 & $\mathsf{X}$ & $2\mathsf{X}$ & $\mathsf{X}+\mathsf{Z}$ & $2\mathsf{Y}$ & $\mathsf{X}+\mathsf{Y}$ & $\mathsf{Z}$ & $\mathsf{Z}$ & $\mathsf{0}$ \\
\multicolumn{10}{|c|}{\cellcolor{Gray}}\\
\multirow{10}{*}{$L_1\gtreqless 0$}
& 22 & $\mathsf{X}$ & $2\mathsf{X}$ & $\mathsf{X}+\mathsf{Z}$ & $2\mathsf{Y}$ & $\mathsf{Y}$ & $\mathsf{Z}$ & $2\mathsf{Z}$ & $\mathsf{Y}$ \\
& 23a & $\mathsf{X}$ & $2\mathsf{X}$ & $\mathsf{X}+\mathsf{Z}$ & $2\mathsf{Y}$ & $\mathsf{Y}$ & $\mathsf{Z}$ & $2\mathsf{Z}$ & $\mathsf{0}$ \\
& 23b & $\mathsf{X}$ & $2\mathsf{X}$ & $\mathsf{X}+\mathsf{Z}$ & $2\mathsf{Y}$ & $\mathsf{Y}$ & $\mathsf{Z}$ & $2\mathsf{Z}$ & $\mathsf{Z}$ \\
& 24a & $\mathsf{X}$ & $2\mathsf{X}$ & $\mathsf{X}+\mathsf{Z}$ & $2\mathsf{Y}$ & $\mathsf{Y}$ & $2\mathsf{Z}$ & $2\mathsf{Z}$ & $\mathsf{0}$ \\
& 24b & $\mathsf{X}$ & $2\mathsf{X}$ & $\mathsf{X}+\mathsf{Z}$ & $2\mathsf{Y}$ & $\mathsf{Y}$ & $2\mathsf{Z}$ & $2\mathsf{Z}$ & $\mathsf{Z}$ \\
& 25a & $\mathsf{X}$ & $2\mathsf{X}$ & $\mathsf{X}+\mathsf{Z}$ & $\mathsf{Y}$ & $\mathsf{Y}$ & $2\mathsf{Z}$ & $2\mathsf{Z}$ & $\mathsf{0}$ \\
& 25b & $\mathsf{X}$ & $2\mathsf{X}$ & $\mathsf{X}+\mathsf{Z}$ & $\mathsf{Y}$ & $\mathsf{Y}$ & $2\mathsf{Z}$ & $2\mathsf{Z}$ & $\mathsf{Z}$ \\
& 26a & $\mathsf{X}$ & $2\mathsf{X}$ & $2\mathsf{X}$ & $2\mathsf{Y}$ & $\mathsf{Y}$ & $2\mathsf{Z}$ & $\mathsf{X}+\mathsf{Z}$ & $\mathsf{0}$ \\
& 26b & $\mathsf{X}$ & $2\mathsf{X}$ & $2\mathsf{X}$ & $\mathsf{X}+\mathsf{Y}$ & $\mathsf{Y}$ & $2\mathsf{Z}$ & $\mathsf{X}+\mathsf{Z}$ & $\mathsf{0}$ \\
& 27a & $\mathsf{Z}$ & $2\mathsf{X}$ & $\mathsf{X}+\mathsf{Y}$ & $2\mathsf{Y}$ & $\mathsf{Y}$ & $\mathsf{0}$ & $2\mathsf{X}$ & $2\mathsf{Z}$ \\
& 27b & $\mathsf{Z}$ & $2\mathsf{X}$ & $\mathsf{X}+\mathsf{Y}$ & $2\mathsf{Y}$ & $\mathsf{Y}$ & $\mathsf{0}$ & $2\mathsf{X}$ & $\mathsf{X}+\mathsf{Z}$ \\
\multicolumn{10}{|c|}{\cellcolor{Gray}}\\
\multirow{9}{*}{$L_1>0$} 
& 28 & $\mathsf{Y}$ & $2\mathsf{X}$ & $\mathsf{X}+\mathsf{Z}$ & $\mathsf{Y}+\mathsf{Z}$ & $2\mathsf{Y}$ & $\mathsf{Z}$ & $\mathsf{Z}$ & $\mathsf{0}$ \\
& 29 & $\mathsf{Y}$ & $2\mathsf{X}$ & $\mathsf{X}+\mathsf{Z}$ & $\mathsf{Y}+\mathsf{Z}$ & $2\mathsf{Y}$ & $\mathsf{Z}$ & $\mathsf{Z}$ & $\mathsf{X}$ \\
& 30 & $\mathsf{Y}$ & $2\mathsf{X}$ & $\mathsf{X}+\mathsf{Z}$ & $\mathsf{Y}+\mathsf{Z}$ & $2\mathsf{Y}$ & $\mathsf{Z}$ & $\mathsf{Z}$ & $\mathsf{Y}$ \\
& 31 & $\mathsf{Y}$ & $2\mathsf{X}$ & $\mathsf{X}+\mathsf{Z}$ & $\mathsf{Y}+\mathsf{Z}$ & $2\mathsf{Y}$ & $\mathsf{X}+\mathsf{Z}$ & $\mathsf{Z}$ & $\mathsf{0}$ \\
& 32a & $\mathsf{Y}$ & $2\mathsf{X}$ & $\mathsf{X}+\mathsf{Z}$ & $\mathsf{Y}+\mathsf{Z}$ & $2\mathsf{Y}$ & $2\mathsf{Z}$ & $\mathsf{Z}$ & $\mathsf{0}$ \\
& 32b & $\mathsf{Y}$ & $2\mathsf{X}$ & $\mathsf{X}+\mathsf{Z}$ & $\mathsf{Y}+\mathsf{Z}$ & $2\mathsf{Y}$ & $\mathsf{Y}+\mathsf{Z}$ & $\mathsf{Z}$ & $\mathsf{0}$ \\
& 33a & $\mathsf{Y}$ & $2\mathsf{X}$ & $\mathsf{X}+\mathsf{Z}$ & $2\mathsf{Y}$ & $2\mathsf{Y}$ & $2\mathsf{Z}$ & $\mathsf{Z}$ & $\mathsf{0}$ \\
& 33b & $\mathsf{Y}$ & $2\mathsf{X}$ & $\mathsf{X}+\mathsf{Z}$ & $2\mathsf{Y}$ & $2\mathsf{Y}$ & $\mathsf{Y}+\mathsf{Z}$ & $\mathsf{Z}$ & $\mathsf{0}$ \\
& 34 & $\mathsf{Z}$ & $2\mathsf{X}$ & $\mathsf{X}+\mathsf{Y}$ & $2\mathsf{Y}$ & $\mathsf{Y}$ & $\mathsf{Z}$ & $2\mathsf{Z}$ & $\mathsf{Y}$ \\
\cline{1-10}
\multicolumn{10}{c}{}\\
\multicolumn{10}{c}{}\\
\multicolumn{10}{c}{reactant molecularity $(0,2,2,2)$}\\
\cline{1-10}
\multirow{10}{*}{$L_1<0$}
& 35a & $\mathsf{0}$ & $\mathsf{X}$ & $2\mathsf{X}$ & $\mathsf{Y}$ & $\mathsf{Y}+\mathsf{Z}$ & $2\mathsf{Z}$ & $\mathsf{X}+\mathsf{Z}$ & $\mathsf{0}$ \\
& 35b & $\mathsf{0}$ & $2\mathsf{X}$ & $2\mathsf{X}$ & $\mathsf{Y}$ & $\mathsf{Y}+\mathsf{Z}$ & $2\mathsf{Z}$ & $\mathsf{X}+\mathsf{Z}$ & $\mathsf{0}$ \\
& 36a & $\mathsf{0}$ & $\mathsf{X}$ & $2\mathsf{X}$ & $2\mathsf{Y}$ & $\mathsf{Y}+\mathsf{Z}$ & $2\mathsf{Z}$ & $\mathsf{X}+\mathsf{Z}$ & $\mathsf{0}$ \\
& 36b & $\mathsf{0}$ & $\mathsf{X}$ & $2\mathsf{X}$ & $\mathsf{X}+\mathsf{Y}$ & $\mathsf{Y}+\mathsf{Z}$ & $2\mathsf{Z}$ & $\mathsf{X}+\mathsf{Z}$ & $\mathsf{0}$ \\
& 36c & $\mathsf{0}$ & $2\mathsf{X}$ & $2\mathsf{X}$ & $2\mathsf{Y}$ & $\mathsf{Y}+\mathsf{Z}$ & $2\mathsf{Z}$ & $\mathsf{X}+\mathsf{Z}$ & $\mathsf{0}$ \\
& 36d & $\mathsf{0}$ & $2\mathsf{X}$ & $2\mathsf{X}$ & $\mathsf{X}+\mathsf{Y}$ & $\mathsf{Y}+\mathsf{Z}$ & $2\mathsf{Z}$ & $\mathsf{X}+\mathsf{Z}$ & $\mathsf{0}$ \\
& 37a & $\mathsf{0}$ & $\mathsf{X}$ & $\mathsf{X}+\mathsf{Y}$ & $2\mathsf{Y}$ & $\mathsf{Y}+\mathsf{Z}$ & $2\mathsf{Z}$ & $\mathsf{X}+\mathsf{Z}$ & $\mathsf{X}$ \\
& 37b & $\mathsf{0}$ & $2\mathsf{X}$ & $\mathsf{X}+\mathsf{Y}$ & $2\mathsf{Y}$ & $\mathsf{Y}+\mathsf{Z}$ & $2\mathsf{Z}$ & $\mathsf{X}+\mathsf{Z}$ & $\mathsf{X}$ \\
& 38 & $\mathsf{0}$ & $\mathsf{X}+\mathsf{Y}$ & $\mathsf{X}+\mathsf{Y}$ & $2\mathsf{Y}$ & $\mathsf{Y}+\mathsf{Z}$ & $2\mathsf{Z}$ & $\mathsf{X}+\mathsf{Z}$ & $\mathsf{X}$ \\
& 39 & $\mathsf{0}$ & $\mathsf{X}+\mathsf{Z}$ & $\mathsf{X}+\mathsf{Y}$ & $2\mathsf{Y}$ & $\mathsf{Y}+\mathsf{Z}$ & $\mathsf{Z}$ & $\mathsf{X}+\mathsf{Z}$ & $\mathsf{X}$ \\
\cline{1-10}
\multicolumn{10}{c}{}\\
\multicolumn{10}{c}{}\\
\multicolumn{10}{c}{reactant molecularity $(1,2,2,2)$}\\
\cline{1-10}
\multirow{24}{*}{$L_1<0$}
& 40a & $\mathsf{X}$ & $2\mathsf{X}$ & $\mathsf{X}+\mathsf{Z}$ & $\mathsf{Y}+\mathsf{Z}$ & $2\mathsf{Y}$ & $\mathsf{Z}$ & $2\mathsf{Z}$ & $\mathsf{0}$ \\
& 40b & $\mathsf{X}$ & $2\mathsf{X}$ & $\mathsf{X}+\mathsf{Z}$ & $\mathsf{Y}+\mathsf{Z}$ & $2\mathsf{Y}$ & $\mathsf{Z}$ & $2\mathsf{Z}$ & $\mathsf{Z}$ \\
& 41 & $\mathsf{X}$ & $2\mathsf{X}$ & $\mathsf{X}+\mathsf{Z}$ & $\mathsf{Y}+\mathsf{Z}$ & $2\mathsf{Y}$ & $\mathsf{Z}$ & $2\mathsf{Z}$ & $\mathsf{Y}$ \\
& 42a & $\mathsf{X}$ & $2\mathsf{X}$ & $\mathsf{X}+\mathsf{Z}$ & $\mathsf{Y}+\mathsf{Z}$ & $2\mathsf{Y}$ & $\mathsf{Z}$ & $2\mathsf{Z}$ & $2\mathsf{Y}$ \\
& 42b & $\mathsf{X}$ & $2\mathsf{X}$ & $\mathsf{X}+\mathsf{Z}$ & $\mathsf{Y}+\mathsf{Z}$ & $2\mathsf{Y}$ & $\mathsf{Z}$ & $2\mathsf{Z}$ & $\mathsf{Y}+\mathsf{Z}$ \\
& 43 & $\mathsf{X}$ & $2\mathsf{X}$ & $\mathsf{X}+\mathsf{Z}$ & $\mathsf{Y}+\mathsf{Z}$ & $2\mathsf{Y}$ & $\mathsf{Z}$ & $\mathsf{Y}+\mathsf{Z}$ & $\mathsf{0}$ \\
& 44 & $\mathsf{X}$ & $2\mathsf{X}$ & $\mathsf{X}+\mathsf{Z}$ & $\mathsf{Y}+\mathsf{Z}$ & $2\mathsf{Y}$ & $\mathsf{Z}$ & $\mathsf{Y}+\mathsf{Z}$ & $\mathsf{Y}$ \\
& 45 & $\mathsf{X}$ & $2\mathsf{X}$ & $\mathsf{X}+\mathsf{Z}$ & $\mathsf{Y}+\mathsf{Z}$ & $2\mathsf{Y}$ & $\mathsf{Z}$ & $\mathsf{Y}+\mathsf{Z}$ & $2\mathsf{Y}$ \\
& 46a & $\mathsf{X}$ & $2\mathsf{X}$ & $\mathsf{X}+\mathsf{Z}$ & $\mathsf{Y}+\mathsf{Z}$ & $2\mathsf{Y}$ & $2\mathsf{Z}$ & $2\mathsf{Z}$ & $\mathsf{0}$ \\
& 46b & $\mathsf{X}$ & $2\mathsf{X}$ & $\mathsf{X}+\mathsf{Z}$ & $\mathsf{Y}+\mathsf{Z}$ & $2\mathsf{Y}$ & $2\mathsf{Z}$ & $2\mathsf{Z}$ & $\mathsf{Z}$ \\
& 46c & $\mathsf{X}$ & $2\mathsf{X}$ & $\mathsf{X}+\mathsf{Z}$ & $\mathsf{Y}+\mathsf{Z}$ & $2\mathsf{Y}$ & $\mathsf{Y}+\mathsf{Z}$ & $2\mathsf{Z}$ & $\mathsf{0}$ \\
& 46d & $\mathsf{X}$ & $2\mathsf{X}$ & $\mathsf{X}+\mathsf{Z}$ & $\mathsf{Y}+\mathsf{Z}$ & $2\mathsf{Y}$ & $\mathsf{Y}+\mathsf{Z}$ & $2\mathsf{Z}$ & $\mathsf{Z}$ \\
& 47a & $\mathsf{X}$ & $2\mathsf{X}$ & $\mathsf{X}+\mathsf{Z}$ & $\mathsf{Y}+\mathsf{Z}$ & $2\mathsf{Y}$ & $2\mathsf{Z}$ & $2\mathsf{Z}$ & $\mathsf{Y}$ \\
& 47b & $\mathsf{X}$ & $2\mathsf{X}$ & $\mathsf{X}+\mathsf{Z}$ & $\mathsf{Y}+\mathsf{Z}$ & $2\mathsf{Y}$ & $\mathsf{Y}+\mathsf{Z}$ & $2\mathsf{Z}$ & $\mathsf{Y}$ \\
& 48a & $\mathsf{X}$ & $2\mathsf{X}$ & $\mathsf{X}+\mathsf{Z}$ & $\mathsf{Y}+\mathsf{Z}$ & $2\mathsf{Y}$ & $2\mathsf{Z}$ & $\mathsf{Y}+\mathsf{Z}$ & $\mathsf{0}$ \\
& 48b & $\mathsf{X}$ & $2\mathsf{X}$ & $\mathsf{X}+\mathsf{Z}$ & $\mathsf{Y}+\mathsf{Z}$ & $2\mathsf{Y}$ & $\mathsf{Y}+\mathsf{Z}$ & $\mathsf{Y}+\mathsf{Z}$ & $\mathsf{0}$ \\
& 49a & $\mathsf{X}$ & $2\mathsf{X}$ & $\mathsf{X}+\mathsf{Z}$ & $\mathsf{Y}+\mathsf{Z}$ & $2\mathsf{Y}$ & $2\mathsf{Z}$ & $\mathsf{Y}+\mathsf{Z}$ & $\mathsf{Y}$ \\
& 49b & $\mathsf{X}$ & $2\mathsf{X}$ & $\mathsf{X}+\mathsf{Z}$ & $\mathsf{Y}+\mathsf{Z}$ & $2\mathsf{Y}$ & $\mathsf{Y}+\mathsf{Z}$ & $\mathsf{Y}+\mathsf{Z}$ & $\mathsf{Y}$ \\
& 50a & $\mathsf{X}$ & $2\mathsf{X}$ & $\mathsf{X}+\mathsf{Z}$ & $\mathsf{Y}+\mathsf{Z}$ & $\mathsf{Y}+\mathsf{Z}$ & $2\mathsf{Z}$ & $2\mathsf{Z}$ & $\mathsf{0}$ \\
& 50b & $\mathsf{X}$ & $2\mathsf{X}$ & $\mathsf{X}+\mathsf{Z}$ & $\mathsf{Y}+\mathsf{Z}$ & $\mathsf{Y}+\mathsf{Z}$ & $2\mathsf{Z}$ & $2\mathsf{Z}$ & $\mathsf{Z}$ \\
& 51 & $\mathsf{X}$ & $2\mathsf{X}$ & $\mathsf{X}+\mathsf{Z}$ & $\mathsf{Y}+\mathsf{Z}$ & $\mathsf{Y}+\mathsf{Z}$ & $2\mathsf{Z}$ & $2\mathsf{Z}$ & $\mathsf{Y}$ \\
& 52 & $\mathsf{X}$ & $2\mathsf{X}$ & $2\mathsf{X}$ & $\mathsf{Y}$ & $\mathsf{Y}+\mathsf{Z}$ & $2\mathsf{Z}$ & $\mathsf{X}+\mathsf{Z}$ & $\mathsf{0}$ \\
& 53a & $\mathsf{X}$ & $2\mathsf{X}$ & $2\mathsf{X}$ & $2\mathsf{Y}$ & $\mathsf{Y}+\mathsf{Z}$ & $2\mathsf{Z}$ & $\mathsf{X}+\mathsf{Z}$ & $\mathsf{0}$ \\
& 53b & $\mathsf{X}$ & $2\mathsf{X}$ & $2\mathsf{X}$ & $\mathsf{X}+\mathsf{Y}$ & $\mathsf{Y}+\mathsf{Z}$ & $2\mathsf{Z}$ & $\mathsf{X}+\mathsf{Z}$ & $\mathsf{0}$ \\
\multicolumn{10}{|c|}{\cellcolor{Gray}}\\
 \multirow{29}{*}{$ L_1\gtreqless 0$}
& 54 & $\mathsf{X}$ & $2\mathsf{X}$ & $\mathsf{X}+\mathsf{Y}$ & $2\mathsf{Y}$ & $2\mathsf{Y}$ & $\mathsf{Z}$ & $\mathsf{X}+\mathsf{Z}$ & $\mathsf{Y}$ \\
& 55a & $\mathsf{X}$ & $2\mathsf{X}$ & $\mathsf{X}+\mathsf{Z}$ & $\mathsf{Y}$ & $\mathsf{X}+\mathsf{Y}$ & $2\mathsf{Z}$ & $2\mathsf{Z}$ & $\mathsf{0}$ \\
& 55b & $\mathsf{X}$ & $2\mathsf{X}$ & $\mathsf{X}+\mathsf{Z}$ & $\mathsf{Y}$ & $\mathsf{X}+\mathsf{Y}$ & $2\mathsf{Z}$ & $2\mathsf{Z}$ & $\mathsf{Z}$ \\
& 56 & $\mathsf{X}$ & $2\mathsf{X}$ & $\mathsf{X}+\mathsf{Z}$ & $\mathsf{Y}$ & $\mathsf{X}+\mathsf{Y}$ & $2\mathsf{Z}$ & $\mathsf{Y}+\mathsf{Z}$ & $\mathsf{Y}$ \\
& 57a & $\mathsf{X}$ & $2\mathsf{X}$ & $\mathsf{X}+\mathsf{Z}$ & $2\mathsf{Y}$ & $2\mathsf{Y}$ & $2\mathsf{Z}$ & $2\mathsf{Z}$ & $\mathsf{0}$ \\
& 57b & $\mathsf{X}$ & $2\mathsf{X}$ & $\mathsf{X}+\mathsf{Z}$ & $2\mathsf{Y}$ & $2\mathsf{Y}$ & $2\mathsf{Z}$ & $2\mathsf{Z}$ & $\mathsf{Z}$ \\
& 57c & $\mathsf{X}$ & $2\mathsf{X}$ & $\mathsf{X}+\mathsf{Z}$ & $2\mathsf{Y}$ & $2\mathsf{Y}$ & $\mathsf{Y}+\mathsf{Z}$ & $2\mathsf{Z}$ & $\mathsf{0}$ \\
& 57d & $\mathsf{X}$ & $2\mathsf{X}$ & $\mathsf{X}+\mathsf{Z}$ & $2\mathsf{Y}$ & $2\mathsf{Y}$ & $\mathsf{Y}+\mathsf{Z}$ & $2\mathsf{Z}$ & $\mathsf{Z}$ \\
& 58a & $\mathsf{X}$ & $2\mathsf{X}$ & $\mathsf{X}+\mathsf{Z}$ & $2\mathsf{Y}$ & $2\mathsf{Y}$ & $2\mathsf{Z}$ & $2\mathsf{Z}$ & $\mathsf{Y}$ \\
& 58b & $\mathsf{X}$ & $2\mathsf{X}$ & $\mathsf{X}+\mathsf{Z}$ & $2\mathsf{Y}$ & $2\mathsf{Y}$ & $\mathsf{Y}+\mathsf{Z}$ & $2\mathsf{Z}$ & $\mathsf{Y}$ \\
& 59a & $\mathsf{X}$ & $2\mathsf{X}$ & $\mathsf{X}+\mathsf{Z}$ & $2\mathsf{Y}$ & $2\mathsf{Y}$ & $2\mathsf{Z}$ & $\mathsf{Y}+\mathsf{Z}$ & $\mathsf{0}$ \\
& 59b & $\mathsf{X}$ & $2\mathsf{X}$ & $\mathsf{X}+\mathsf{Z}$ & $2\mathsf{Y}$ & $2\mathsf{Y}$ & $\mathsf{Y}+\mathsf{Z}$ & $\mathsf{Y}+\mathsf{Z}$ & $\mathsf{0}$ \\
& 60a & $\mathsf{X}$ & $2\mathsf{X}$ & $\mathsf{X}+\mathsf{Z}$ & $2\mathsf{Y}$ & $2\mathsf{Y}$ & $2\mathsf{Z}$ & $\mathsf{Y}+\mathsf{Z}$ & $\mathsf{Y}$ \\
& 60b & $\mathsf{X}$ & $2\mathsf{X}$ & $\mathsf{X}+\mathsf{Z}$ & $2\mathsf{Y}$ & $2\mathsf{Y}$ & $\mathsf{Y}+\mathsf{Z}$ & $\mathsf{Y}+\mathsf{Z}$ & $\mathsf{Y}$ \\
& 61a & $\mathsf{X}$ & $2\mathsf{X}$ & $\mathsf{X}+\mathsf{Z}$ & $2\mathsf{Y}$ & $\mathsf{X}+\mathsf{Y}$ & $\mathsf{Z}$ & $2\mathsf{Z}$ & $\mathsf{0}$ \\
& 61b & $\mathsf{X}$ & $2\mathsf{X}$ & $\mathsf{X}+\mathsf{Z}$ & $2\mathsf{Y}$ & $\mathsf{X}+\mathsf{Y}$ & $\mathsf{Z}$ & $2\mathsf{Z}$ & $\mathsf{Z}$ \\
& 62 & $\mathsf{X}$ & $2\mathsf{X}$ & $\mathsf{X}+\mathsf{Z}$ & $2\mathsf{Y}$ & $\mathsf{X}+\mathsf{Y}$ & $\mathsf{Z}$ & $2\mathsf{Z}$ & $\mathsf{Y}$ \\
& 63 & $\mathsf{X}$ & $2\mathsf{X}$ & $\mathsf{X}+\mathsf{Z}$ & $2\mathsf{Y}$ & $\mathsf{X}+\mathsf{Y}$ & $\mathsf{Z}$ & $\mathsf{Y}+\mathsf{Z}$ & $\mathsf{Y}$ \\
& 64a & $\mathsf{X}$ & $2\mathsf{X}$ & $\mathsf{X}+\mathsf{Z}$ & $2\mathsf{Y}$ & $\mathsf{X}+\mathsf{Y}$ & $2\mathsf{Z}$ & $2\mathsf{Z}$ & $\mathsf{0}$ \\
& 64b & $\mathsf{X}$ & $2\mathsf{X}$ & $\mathsf{X}+\mathsf{Z}$ & $2\mathsf{Y}$ & $\mathsf{X}+\mathsf{Y}$ & $2\mathsf{Z}$ & $2\mathsf{Z}$ & $\mathsf{Z}$ \\
& 65 & $\mathsf{X}$ & $2\mathsf{X}$ & $\mathsf{X}+\mathsf{Z}$ & $2\mathsf{Y}$ & $\mathsf{X}+\mathsf{Y}$ & $2\mathsf{Z}$ & $\mathsf{Y}+\mathsf{Z}$ & $\mathsf{Y}$ \\
& 66 & $\mathsf{X}$ & $2\mathsf{X}$ & $\mathsf{X}+\mathsf{Z}$ & $2\mathsf{Y}$ & $\mathsf{X}+\mathsf{Y}$ & $\mathsf{X}+\mathsf{Z}$ & $\mathsf{Y}+\mathsf{Z}$ & $\mathsf{0}$ \\
& 67a & $\mathsf{X}$ & $2\mathsf{X}$ & $\mathsf{X}+\mathsf{Z}$ & $\mathsf{Y}+\mathsf{Z}$ & $\mathsf{X}+\mathsf{Y}$ & $\mathsf{Z}$ & $2\mathsf{Z}$ & $\mathsf{0}$ \\
& 67b & $\mathsf{X}$ & $2\mathsf{X}$ & $\mathsf{X}+\mathsf{Z}$ & $\mathsf{Y}+\mathsf{Z}$ & $\mathsf{X}+\mathsf{Y}$ & $\mathsf{Z}$ & $2\mathsf{Z}$ & $\mathsf{Z}$ \\
& 68 & $\mathsf{X}$ & $2\mathsf{X}$ & $\mathsf{X}+\mathsf{Z}$ & $\mathsf{Y}+\mathsf{Z}$ & $\mathsf{X}+\mathsf{Y}$ & $\mathsf{Z}$ & $2\mathsf{Z}$ & $\mathsf{Y}$ \\
& 69 & $\mathsf{X}$ & $2\mathsf{X}$ & $\mathsf{X}+\mathsf{Z}$ & $\mathsf{Y}+\mathsf{Z}$ & $\mathsf{X}+\mathsf{Y}$ & $\mathsf{Z}$ & $\mathsf{Y}+\mathsf{Z}$ & $\mathsf{Y}$ \\
& 70a & $\mathsf{X}$ & $2\mathsf{X}$ & $\mathsf{X}+\mathsf{Z}$ & $\mathsf{Y}+\mathsf{Z}$ & $\mathsf{X}+\mathsf{Y}$ & $2\mathsf{Z}$ & $2\mathsf{Z}$ & $\mathsf{0}$ \\
& 70b & $\mathsf{X}$ & $2\mathsf{X}$ & $\mathsf{X}+\mathsf{Z}$ & $\mathsf{Y}+\mathsf{Z}$ & $\mathsf{X}+\mathsf{Y}$ & $2\mathsf{Z}$ & $2\mathsf{Z}$ & $\mathsf{Z}$ \\
& 71 & $\mathsf{X}$ & $2\mathsf{X}$ & $\mathsf{X}+\mathsf{Z}$ & $\mathsf{Y}+\mathsf{Z}$ & $\mathsf{X}+\mathsf{Y}$ & $2\mathsf{Z}$ & $\mathsf{Y}+\mathsf{Z}$ & $\mathsf{Y}$ \\
\multicolumn{10}{|c|}{\cellcolor{Gray}}\\
\multirow{15}{*}{$L_1 > 0$} & 72a & $\mathsf{X}$ & $2\mathsf{X}$ & $\mathsf{X}+\mathsf{Z}$ & $2\mathsf{Y}$ & $2\mathsf{Y}$ & $\mathsf{0}$ & $\mathsf{X}+\mathsf{Y}$ & $\mathsf{X}+\mathsf{Z}$ \\
& 72b & $\mathsf{X}$ & $2\mathsf{X}$ & $\mathsf{X}+\mathsf{Z}$ & $2\mathsf{Y}$ & $2\mathsf{Y}$ & $\mathsf{Y}$ & $\mathsf{X}+\mathsf{Y}$ & $\mathsf{X}+\mathsf{Z}$ \\
& 73 & $\mathsf{X}$ & $2\mathsf{X}$ & $\mathsf{X}+\mathsf{Z}$ & $2\mathsf{Y}$ & $\mathsf{Y}+\mathsf{Z}$ & $\mathsf{Z}$ & $\mathsf{X}+\mathsf{Y}$ & $\mathsf{X}+\mathsf{Z}$ \\
& 74a & $\mathsf{X}$ & $2\mathsf{X}$ & $\mathsf{X}+\mathsf{Z}$ & $2\mathsf{Y}$ & $\mathsf{Y}+\mathsf{Z}$ & $\mathsf{Z}$ & $2\mathsf{Y}$ & $2\mathsf{Z}$ \\
& 74b & $\mathsf{X}$ & $2\mathsf{X}$ & $\mathsf{X}+\mathsf{Z}$ & $2\mathsf{Y}$ & $\mathsf{Y}+\mathsf{Z}$ & $\mathsf{Z}$ & $2\mathsf{Y}$ & $\mathsf{Y}+\mathsf{Z}$ \\
& 75a & $\mathsf{X}$ & $2\mathsf{X}$ & $\mathsf{X}+\mathsf{Z}$ & $2\mathsf{Y}$ & $\mathsf{Y}+\mathsf{Z}$ & $2\mathsf{Z}$ & $2\mathsf{Z}$ & $\mathsf{0}$ \\
& 75b & $\mathsf{X}$ & $2\mathsf{X}$ & $\mathsf{X}+\mathsf{Z}$ & $2\mathsf{Y}$ & $\mathsf{Y}+\mathsf{Z}$ & $2\mathsf{Z}$ & $2\mathsf{Z}$ & $\mathsf{Z}$ \\
& 76 & $\mathsf{X}$ & $2\mathsf{X}$ & $\mathsf{X}+\mathsf{Z}$ & $2\mathsf{Y}$ & $\mathsf{Y}+\mathsf{Z}$ & $2\mathsf{Z}$ & $2\mathsf{Z}$ & $\mathsf{Y}$ \\
& 77 & $\mathsf{X}$ & $2\mathsf{X}$ & $\mathsf{X}+\mathsf{Z}$ & $\mathsf{Y}+\mathsf{Z}$ & $2\mathsf{Y}$ & $\mathsf{X}+\mathsf{Z}$ & $\mathsf{Y}+\mathsf{Z}$ & $\mathsf{0}$ \\
& 78 & $\mathsf{X}$ & $2\mathsf{X}$ & $\mathsf{X}+\mathsf{Y}$ & $2\mathsf{Y}$ & $2\mathsf{Y}$ & $\mathsf{Z}$ & $\mathsf{X}+\mathsf{Z}$ & $\mathsf{0}$ \\
& 79a & $\mathsf{X}$ & $2\mathsf{X}$ & $\mathsf{X}+\mathsf{Y}$ & $2\mathsf{Y}$ & $2\mathsf{Y}$ & $2\mathsf{Z}$ & $\mathsf{X}+\mathsf{Z}$ & $\mathsf{0}$ \\
& 79b & $\mathsf{X}$ & $2\mathsf{X}$ & $\mathsf{X}+\mathsf{Y}$ & $2\mathsf{Y}$ & $2\mathsf{Y}$ & $\mathsf{Y}+\mathsf{Z}$ & $\mathsf{X}+\mathsf{Z}$ & $\mathsf{0}$ \\
& 80a & $\mathsf{Y}$ & $2\mathsf{X}$ & $2\mathsf{X}$ & $2\mathsf{Y}$ & $\mathsf{Y}+\mathsf{Z}$ & $2\mathsf{Z}$ & $\mathsf{X}+\mathsf{Z}$ & $\mathsf{0}$ \\
& 80b & $\mathsf{Y}$ & $2\mathsf{X}$ & $2\mathsf{X}$ & $\mathsf{X}+\mathsf{Y}$ & $\mathsf{Y}+\mathsf{Z}$ & $2\mathsf{Z}$ & $\mathsf{X}+\mathsf{Z}$ & $\mathsf{0}$ \\
& 81a & $\mathsf{Y}$ & $2\mathsf{X}$ & $\mathsf{X}+\mathsf{Z}$ & $2\mathsf{Y}$ & $\mathsf{Y}+\mathsf{Z}$ & $2\mathsf{Z}$ & $2\mathsf{Z}$ & $\mathsf{0}$ \\
& 81b & $\mathsf{Y}$ & $2\mathsf{X}$ & $\mathsf{X}+\mathsf{Z}$ & $2\mathsf{Y}$ & $\mathsf{Y}+\mathsf{Z}$ & $2\mathsf{Z}$ & $2\mathsf{Z}$ & $\mathsf{Z}$ \\
& 82 & $\mathsf{Y}$ & $\mathsf{X}+\mathsf{Y}$ & $2\mathsf{X}$ & $\mathsf{Y}+\mathsf{Z}$ & $\mathsf{Y}+\mathsf{Z}$ & $\mathsf{Z}$ & $\mathsf{X}+\mathsf{Z}$ & $\mathsf{0}$ \\
& 83 & $\mathsf{Y}$ & $\mathsf{X}+\mathsf{Y}$ & $2\mathsf{X}$ & $\mathsf{Y}+\mathsf{Z}$ & $\mathsf{Y}+\mathsf{Z}$ & $2\mathsf{Z}$ & $\mathsf{X}+\mathsf{Z}$ & $\mathsf{0}$ \\
& 84 & $\mathsf{Y}$ & $\mathsf{X}+\mathsf{Y}$ & $2\mathsf{X}$ & $\mathsf{Y}$ & $\mathsf{Y}+\mathsf{Z}$ & $2\mathsf{Z}$ & $\mathsf{X}+\mathsf{Z}$ & $\mathsf{0}$ \\
& 85a & $\mathsf{Y}$ & $\mathsf{X}+\mathsf{Y}$ & $2\mathsf{X}$ & $2\mathsf{Y}$ & $\mathsf{Y}+\mathsf{Z}$ & $2\mathsf{Z}$ & $\mathsf{X}+\mathsf{Z}$ & $\mathsf{0}$ \\
& 85b & $\mathsf{Y}$ & $\mathsf{X}+\mathsf{Y}$ & $2\mathsf{X}$ & $\mathsf{X}+\mathsf{Y}$ & $\mathsf{Y}+\mathsf{Z}$ & $2\mathsf{Z}$ & $\mathsf{X}+\mathsf{Z}$ & $\mathsf{0}$ \\
& 86 & $\mathsf{Z}$ & $\mathsf{X}+\mathsf{Z}$ & $2\mathsf{X}$ & $\mathsf{Y}+\mathsf{Z}$ & $\mathsf{X}+\mathsf{Y}$ & $\mathsf{0}$ & $\mathsf{Y}+\mathsf{Z}$ & $\mathsf{X}+\mathsf{Y}$ \\
\cline{1-10}
\end{longtable}
\end{center}

We make some notes about expectional networks, or interesting groups of networks, in the list above. 

\begin{note}[The example of Wilhelm: 40a]
Network 40a in the list above is precisely the network identified by Wilhelm \cite{Wilhelm2009} as being capable of supercritical Hopf bifurcation. This network is analysed further in \cite[Section 5]{boros:hofbauer:2022b}. 
\end{note}

\begin{note}[A network with no species appearing on both sides of any reaction: 33a]
There is exactly one network in the list above, namely network 33a, where no species figures on both sides of any reaction. This network admits only a subcritical Hopf bifurcation. 
\end{note}

\begin{note}[A network where $L_1$ changes sign twice: 26]
\label{noteL1zerotwice}
Up to dynamical equivalence, there is only one network, namely network 26, where the Bautin set has two components (see Remark~\ref{remHcurve}). At all points on the Bautin set, the second Lyapunov coefficient, $L_2$, is negative.
\end{note}

\begin{note}[Networks which are equivalent in the sense of Example~\ref{exnondeq}]
\label{notenonsimpleequiv}
Recall that if we allow a wider range of transformations than species permutation, two CRNs which fail to be dynamically equivalent in the restrictive sense used here may still give rise to essentially the same dynamics. In particular, we find that the following nine pairs of networks give rise to the same set of differential equations following the recoordinatisation in Section~\ref{secrecoord}: 14 and 15; 17 and 18; 23 and 25; 28 and 32; 40 and 46; 42 and 47; 44 and 49; 67 and 70; 69 and 71. It is possible that there are further equivalences amongst the networks. 
\end{note}

\begin{note}[Networks with more than one stable limit set and/or more than one periodic orbit]
\label{noteBautin}
We list the networks which are claimed in Theorem~\ref{thmBautin} to permit multiple stable limit sets, or multiple periodic orbits, for some values of the rate constants. 
\begin{itemize}
\item The following $29$ networks admit rate constants where a linearly stable equilibrium coexists with a linearly stable periodic orbit and an unstable periodic orbit:
\begin{itemize}
\item The networks with $L_1\geq0$, namely networks 8--13;
\item The following networks where $L_1$ can take all signs: 6, 23--27, 54--57, and 59--71.
\end{itemize}
\item Networks 22 and 58 admit rate constants where an unstable equilibrium coexists with a linearly stable periodic orbit and an unstable periodic orbit. 
\end{itemize}
\end{note}

\begin{note}[Networks with two simple flow reactions of the form $0 \rightarrow \mathsf{X}_i$ or $\mathsf{X}_i \rightarrow 0$: 10a and 11a]
There are exactly two networks in the list above which include two reactions of the form $0 \rightarrow \mathsf{X}_i$ or $\mathsf{X}_i \rightarrow 0$, namely 10a and 11a. Although they do not admit supercritical Hopf bifurcation, these two networks nevertheless admit a stable, nondegenerate periodic orbit with mass action kinetics (see Theorem~\ref{thmBautin} and Note~\ref{noteBautin}) and could be considered the simplest of all bimolecular CRNs proven to do so. Remarkably, they also admit the coexistence of a stable periodic orbit and a stable equilibrium. As remarked on in Section~\ref{secconclusions}, the fully open extensions of these CRNs, namely 
\[
\mathsf{X} + \mathsf{Z} \rightarrow \mathsf{Y} + \mathsf{Z} \rightarrow 2\mathsf{Z}, \quad 0 \rightleftharpoons \mathsf{X}, \quad 0 \rightleftharpoons \mathsf{Y}, \quad 0 \rightleftharpoons \mathsf{Z}
\]
and
\[
\mathsf{X} + \mathsf{Z} \rightarrow 2\mathsf{Y}, \quad \mathsf{Y} + \mathsf{Z} \rightarrow 2\mathsf{Z}, \quad 0 \rightleftharpoons \mathsf{X}, \quad 0 \rightleftharpoons \mathsf{Y}, \quad 0 \rightleftharpoons \mathsf{Z}
\]
must admit a nondegenerate, stable periodic orbit with mass action kinetics. One of these was found in numerical simulations in \cite{banajiCRNosci}, while the other was missed.
\end{note}

\begin{note}[S-systems]
\label{noteSsystems}
$11$ out of the $86$ networks with nondegenerate Hopf bifurcation, namely, networks 5, 10, 14, 15, 37, 39, 40, 44, 46, 49, 50, give rise to so-called ``S-systems''. An S-system is a dynamical system on the positive orthant for which the right hand side is given by binomials. S-systems were introduced by Savageau \cite{savageau:1969a,savageau:1969b}, in the context of biochemical systems theory. For a recent review and an extensive list of references, see \cite{voit:2013}. As already observed in \cite{savageau:1969b}, the binomial structure of an S-system allows one to reduce the computation of positive equilibria to linear algebra.
In particular, it is easy to characterize when such a dynamical system has a unique positive equilibrium. At the same time, even a planar S-system with a unique positive equilibrium may give rise to rich dynamical behaviour, as demonstrated in \cite{boros:hofbauer:mueller:regensburger:2019,boros:hofbauer:2019}. Hopf bifurcations of planar S-systems are discussed in \cite{lewis:1991}, and stable limit cycles in these systems are constructed in \cite{yin:voit:2008}. 
\end{note}

\bibliographystyle{unsrt}


\end{document}